\documentclass[12pt]{amsart}
\pdfoutput=1
\usepackage{amsmath, amssymb, amsthm, amsfonts, mathrsfs}
\usepackage{cite}
\usepackage{amscd}
\usepackage{url}
\usepackage{graphicx}
\usepackage{caption}
\usepackage{subcaption}
\usepackage{fullpage}

\usepackage{lscape, pdflscape}

%%% AMSTHM definitions
\newtheorem{proposition}{Proposition}

\newtheorem{lemma}{Lemma}

\newtheorem{theorem}{Theorem}
\newtheorem{corollary}[proposition]{Corollary}
\theoremstyle{definition}

%%%% Custom Commands
\newcommand{\ds}[1]{\ensuremath{\displaystyle{#1}}}

\newcommand{\incentive}{\varphi}

\makeatletter
\g@addto@macro{\endabstract}{\@setabstract}
\newcommand{\authorfootnotes}{\renewcommand\thefootnote{\@fnsymbol\c@footnote}}%
\makeatother
\begin{document}
% \title{Incentive Processes in Finite Populations}
% \author{Marc Harper}
% \address{University of California Los Angeles}
% \email{marcharper@ucla.edu} 
% \author{Dashiell Fryer}
% \authorinfo{\texttt{c}}
\date{}
% \subjclass[2000]{Primary: 37N25; Secondary: 91A22, 94A15}
% \keywords{evolutionary game theory, information geometry, information divergence, replicator equation, Bayesian inference, information geometry, Fisher information}

\begin{center}
  \LARGE 
  Stationary Stability for Evolutionary Dynamics in Finite Populations \par \bigskip

  \normalsize
  \authorfootnotes
  Marc Harper\footnote{corresponding author, email: marc.harper@gmail.com\\ \indent AMS keywords: 91A22, 92D25, 37B25}\textsuperscript{1}, Dashiell Fryer\textsuperscript{2}\par \bigskip

  \textsuperscript{1}Department of Genomics and Proteomics, UCLA, \par
  \textsuperscript{2}Department of Mathematics, Pomona College\par \bigskip
  
%   \today
\end{center}

% \keywords{AMS keywords: 91A22, 92D25, 37B25 }

\begin{abstract}
We demonstrate a vast expansion of the theory of evolutionary stability to finite populations with mutation, connecting the theory of the stationary distribution of the Moran process with the Lyapunov theory of evolutionary stability. We define the notion of stationary stability for the Moran process with mutation and generalizations, as well as a generalized notion of evolutionary stability that includes mutation called an incentive stable state (ISS) candidate. For sufficiently large populations, extrema of the stationary distribution are ISS candidates and we give a family of Lyapunov quantities that are locally minimized at the stationary extrema and at ISS candidates. In various examples, including for the Moran and Wright-Fisher processes, we show that the local maxima of the stationary distribution capture the traditionally-defined evolutionarily stable states. The classical stability theory of the replicator dynamic is recovered in the large population limit. Finally we include descriptions of possible extensions to populations of variable size and populations evolving on graphs.
\end{abstract}

\newpage

\section*{Graphical Abstract}

\begin{figure}[h]
            \qquad
        \begin{subfigure}[b]{0.30\textwidth}
            \centering
            \includegraphics[width=\textwidth]{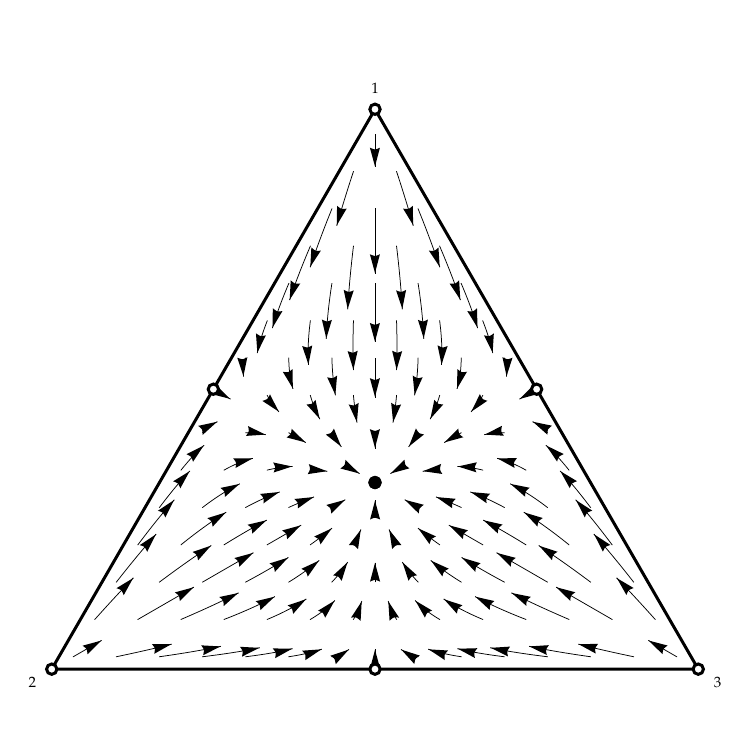}
%             \caption{}
%             \label{}
        \end{subfigure} \qquad
        \begin{subfigure}[b]{0.4\textwidth}
            \centering
            \includegraphics[width=\textwidth]{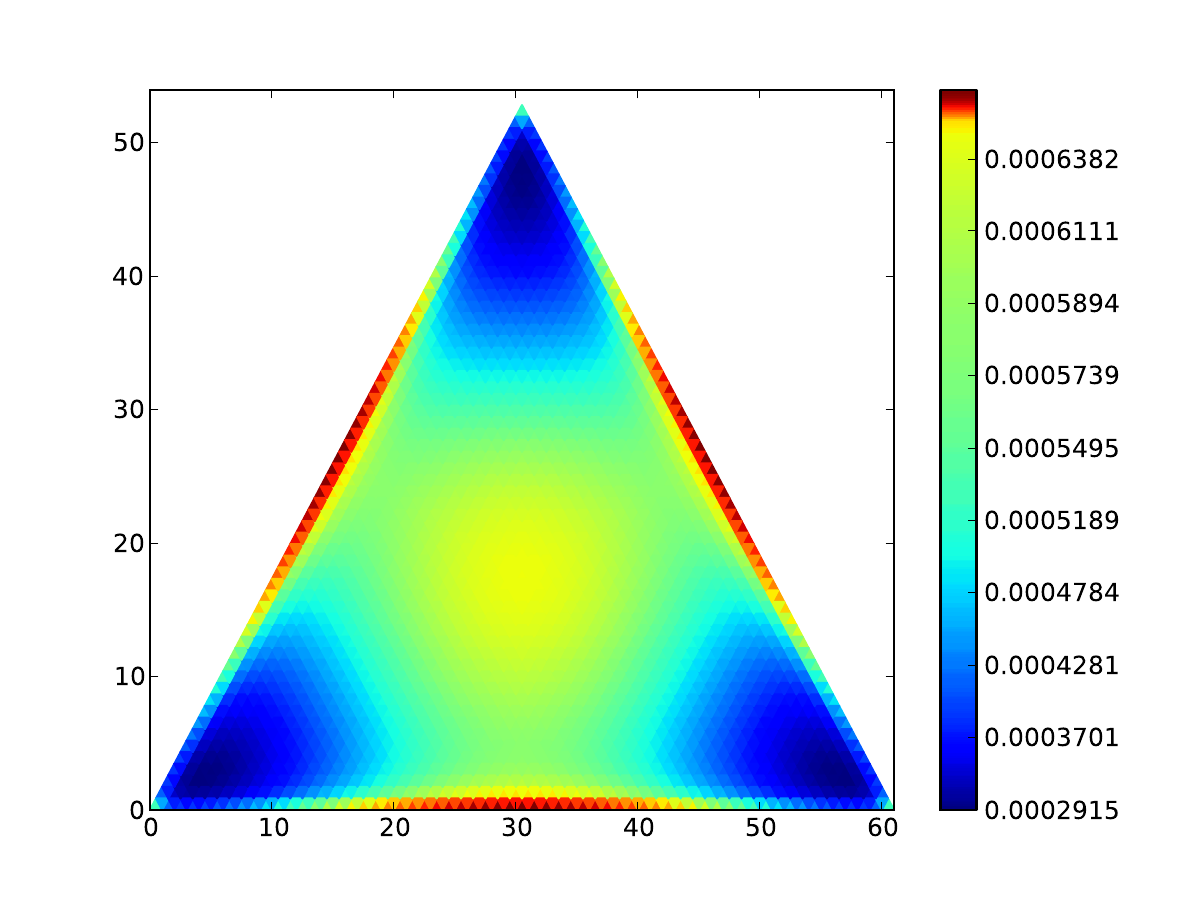}
%             \caption{}
%             \label{}
        \end{subfigure}%
        ~ %add desired spacing between images, e. g. ~, \quad, \qquad etc. 
          %(or a blank line to force the subfigure onto a new line)
        \\
        \begin{subfigure}[b]{0.4\textwidth}
            \centering
            \includegraphics[width=\textwidth]{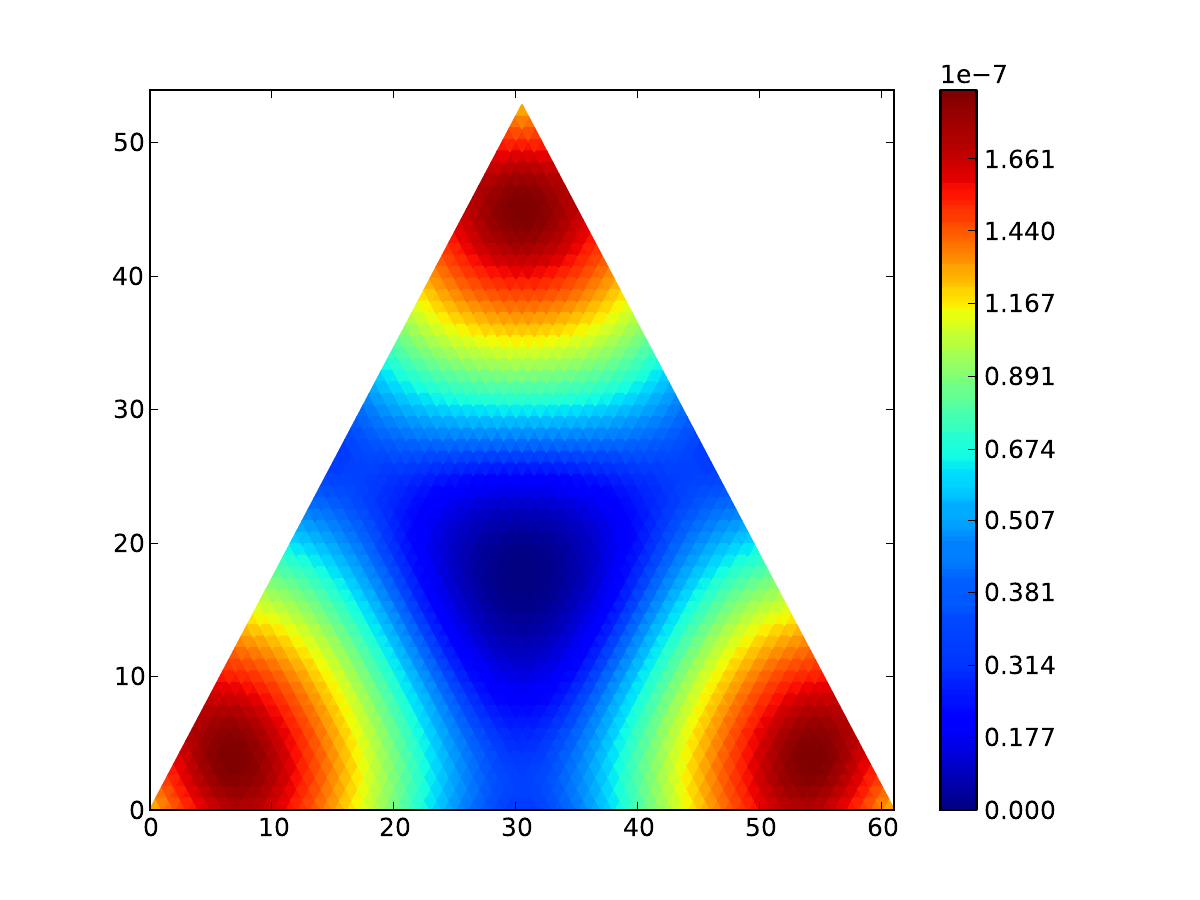}
%             \caption{}
%             \label{}
        \end{subfigure}
        \begin{subfigure}[b]{0.4\textwidth}
            \centering
            \includegraphics[width=\textwidth]{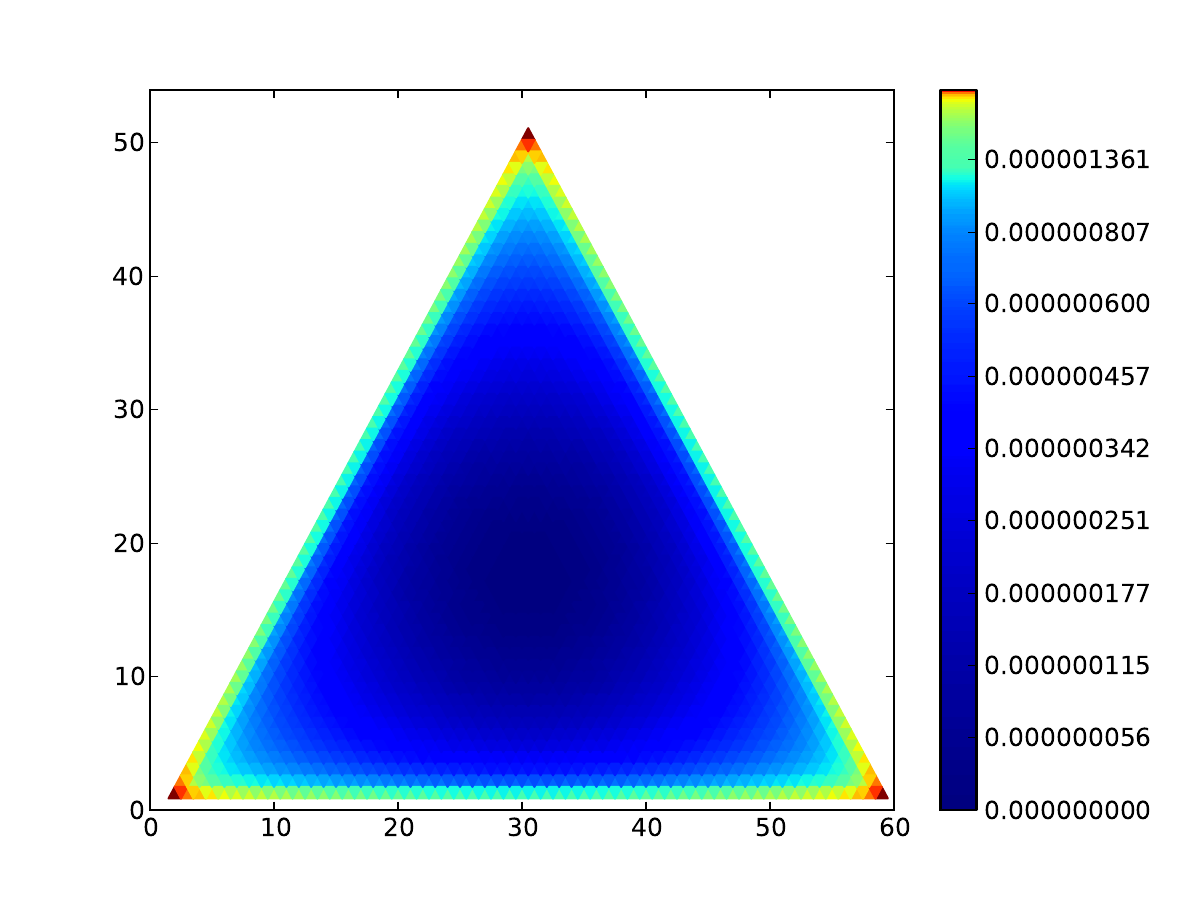}
%             \caption{}
%             \label{}
        \end{subfigure}\\%
        ~ %add desired spacing between images, e. g. ~, \quad, \qquad etc. 
          %(or a blank line to force the subfigure onto a new line)
        \begin{subfigure}[b]{0.4\textwidth}
            \centering
        
        \[ A = \left( \begin{smallmatrix} 0 & 1 & 1\\
                                     1 & 0 & 1\\
                                     1 & 1 & 0
                \end{smallmatrix} \right) \]
        \end{subfigure}
        \caption{\small{Top left: Vector field for the replicator dynamic with Fermi selection, with fitness landscape defined by game matrix 7 in I.M. Bomze's classification (above); made with Dynamo \cite{franchetti2012introduction}. Top right: Stationary distribution of the Moran process with Fermi selection ($N=60$, $\beta=0.1$, $\mu=\frac{3}{2N}$) which is locally maximal at the interior stable rest point. Bottom left: Euclidean distance between each population state and the expected next state; local minima correspond to rest points of the vector field. Bottom right: Relative entropy of each state and the expected next state. For the heatmaps the boundary has not been plotted to make the interior more visible.}}
        \label{figure_graphical_abstract}
\end{figure} 

\newpage

% \maketitle

\section{Introduction}

Finite population evolutionary dynamics are of broad interest for both application and theory \cite{taylor2004evolutionary} \cite{nowak2004emergence} \cite{ficici2000effects} \cite{fogel1998instability} \cite{muirhead2009modeling} \cite{fudenberg2006evolutionary}  \cite{nowak2006evolutionary} \cite{noble2011multivariate}; in particular, concepts of evolutionary stability have been widely-studied \cite{taylor1978evolutionary} \cite{smith1993evolution} \cite{weibull1997evolutionary} \cite{traulsen2006stochasticity}, and recently applied to finite populations \cite{wild2004fitness} with mutation (and weak selection) \cite{antal2009mutation}. Many evolutionary dynamics, such as the replicator equation, effectively assume infinite population sizes which allows a variety of tools from dynamical systems to be applied. Lyapunov stability theorems are crucial tools in the study of dynamical systems and evolutionary dynamics and there is a long history of Lyapunov stability results in evolutionary game theory and mathematical biology \cite{weibull1997evolutionary} \cite{bomze1991cross} \cite{akin1984evolutionary} \cite{sandholm2008projection} \cite{sato2005stability}. Evolutionary dynamics in finite populations are commonly modeled as Markov processes, such as the Moran and the Wright-Fisher processes \cite{moran1962statistical} \cite{moran1958random} \cite{imhof2006evolutionary} \cite{taylor2006symmetry} \cite{ewens2004mathematical}, which are not deterministic; in the case of the Moran process, the replicator equation can be recovered in a large population limit \cite{traulsen2005coevolutionary} \cite{traulsen2006coevolutionary}. Such stochastic models lack Lyapunov stability theorems. The theory of Markov processes, however, provides powerful analytic tools, notably the stationary distribution, capturing the long-run behavior of a process. We will show that there is an intimate connection between the stationary distributions of these processes and the celebrated theory of evolutionary stability.

In particular, for populations of finitely many replicating types, we define a \emph{Lyapunov-like} quantity, namely a quantity that is positive-definite, decreasing toward an ``equilibrium'' locally, and minimal at the equilibrium, for a large class of birth-death processes we call the incentive process \cite{harper2013incentive}, including the Moran process and the Fermi process \cite{traulsen2009stochastic}. We then show local maxima of the stationary distribution of the Markov processes are local minima of the Lyapunov quantity for sufficiently large populations, which in turn are generalized evolutionarily stable states. In this manner, we have effectively extended the folk theorem of evolutionary dynamics \cite{cressman2003evolutionary} \cite{hofbauer2003evolutionary} to these stochastic processes, including those with mutation (in fact we generally require some mutation for the stationary distribution to exist). Similar yet more nuanced results hold for the Wright-Fisher process on two types. For dynamics on populations of three types, such as the famous rock-scissors-paper dynamics, we show that similar results hold for the Moran process, and that the stationary distribution and Lyapunov quantity recapitulate the phase portraits of the replicator equation. Our main results apply to populations with any finite number of types. Finally we demonstrate some extensions to the dynamics of birth-death processes in structured populations, i.e. in the context of evolutionary graph theory, and of populations of variable size.

The use of information-theoretic quantities such as cross entropy and relative entropy \cite{kullback1951information} date back at least to \cite{akin1984evolutionary} and \cite{bomze1991cross}. Recently author Harper used generalizations from statistical thermodynamics \cite{naudts2011generalised} to extend the well-known Lyapunov stability result for the replicator dynamic to a large class of geometries \cite{harper2011escort}, simultaneously capturing the facts that the relative entropy is a Lyapunov function for the replicator dynamic and the Euclidean distance is a Lyapunov function for the projection dynamic \cite{sandholm2008projection}. (Local versions of such results were given first in \cite{hofbauer1990adaptive} using Riemannian geometry.) Author Fryer extended the local Lyapunov result for the relative entropy from the replicator dynamic to a class of dynamics known as the incentive dynamics \cite{fryer2012kullback}, which includes the logit, Fermi, and best-reply dynamics, and introduced the concept of an \emph{incentive stable state (ISS)} as a generalization of evolutionary stability \cite{weibull1997evolutionary}. The authors results were combined and further extended to various time-scales (including the standard discrete and continuous time-scales) in \cite{harper2012stability}, yielding a vastly general form of the classical Lyapunov stability theorem of \cite{akin1984evolutionary}. Now we extend some of these results to finite populations.

We proceed by applying a local/discrete variant of relative entropy to the study of Markov processes and drawing inspiration from methods from statistical inference. Bayesian inference is analogous to the discrete replicator dynamic \cite{shalizi2009dynamics} and relative entropy is a commonly used measure of convergence of an inference process. Simply stated, given a particular population state, we look at the expected next state of the system, formed by weighting the adjacent states by the transition probabilities of moving to those states, and compare this expected state to the current state via the standard relative entropy function (and others) from information theory. Intuitively, a population state is stable if the expected population state is close to current state. Indeed, we show that extrema of the stationary distribution minimize relative entropy between the expected state and the current state for a variety of processes. In particular we focus on the incentive process with mutation, a mapping of Fryer's incentive dynamic to finite populations, first introduced in \cite{harper2013inherent}, a generational version called the $k$-fold incentive process, and the Wright-Fisher process.

We can say more, in fact. For the distance between the expected next state and the current state, we can use any of the information-theoretic $q$-divergences \cite{harper2012stability}, which range from the Euclidean distance $(1/2)||x-y||^2$ for $q=0$ to the relative entropy for $q=1$. The Euclidean distance will give the best results for finite populations since it is well-defined on the boundary of the simplex as well as the interior, and will detect stable points everywhere. The relative entropy, however, will yield the best connection to deterministic dynamics in the limit that $N \to \infty$, where we recapture the classical results for the replicator equation.

The authors explored the incentive process without mutation in \cite{harper2013incentive}, where we investigated the fixation probabilities of the process in the absence of mutation. Together these two approaches give a complete characterization of the behavior of the incentive process, whether the ``equilibria'' of the process are the boundary fixation states or if there are interior maxima of the stationary distribution. Stationary distributions of the Moran process and variants with mutation have been studied by Claussen and Traulsen \cite{claussen2005non} and others e.g. \cite{taylor2004evolutionary}. We note that the incentive processes captures a similar set of processes as Sandholm's microfoundations approach using revisions protocols \cite{sandholm2005excess} \cite{sandholm2010stochastic} \cite{sandholm2012stochastic}.

\section{Preliminaries}

% \subsection{The Moran Process and Generalizations}
% 
% The Moran process is a birth-death process that describes natural selection in finite populations \cite{moran1962statistical} and has many applications \cite{dingli2011stochastic} \cite{traulsen2009stochastic}. In each round of the process, an individual is chosen proportionally to fitness to reproduce and an individual is chosen at random to be replaced. The classical Moran Process has various generalizations, e.g. to include mutation and frequency dependent fitness, used by Fudenberg et al \cite{fudenberg2004stochastic}. Let us consider a slight generalization to include possibly variable mutation rates that depend on the population state. For a population of size $N$, let the population be divided into two types $A$ and $B$, with the number of $A$ individuals denoted by $i$ and the number of $B$ individuals by $N-i$. A pair $(i, N-i)$ with $0 \leq i \leq N$ is a population state. Let $f_A$ and $f_B$ be the fitness of the types $A$ and $B$ respectively, possibly depending on the population state (i.e. is frequency-dependent).

\subsection{Incentive Proportionate Selection with Mutation}

The incentive process was briefly introduced in the appendix of \cite{harper2013inherent}, as a generalization of the Moran process incorporating the concept of an incentive. An incentive is a function that mediates the interaction of the population with the fitness landscape, describing the mechanisms of many dynamics via a functional parameter, including replicator and projection dynamics, and logit and Fermi processes. A Fermi incentive is frequently used to avoid the general issue of dividing by zero when computing fitness proportionate selection \cite{traulsen2007pairwise} \cite{traulsen2009stochastic}. The authors described the fixation probabilities of the incentive process without mutation in \cite{harper2013incentive}. We now describe the incentive process with mutation, which captures a variety of existing processes, such as those used in \cite{claussen2005non} and \cite{taylor2004evolutionary}.

Let a population be composed of $n$ types $A_1, \ldots A_n$ of size $N$ with $a_i$ individuals of type $A_i$ so that $N = a_1 + \cdots + a_n$. We will denote a population state by the tuple $a = (a_1, \ldots, a_n)$ and the population distribution by $\bar{a} = a / N$. Define a matrix of mutations $M$ where $0 \leq M_{i j} \leq 1$ may be a function of the population state for our most general results, but we will typically assume in examples that for some constant value $\mu$, the mutation matrix takes the form $M_{i j} = \mu / (n-1)$ for $i \neq j$ and $M_{i i} = 1 - \mu$. Finally we assume that we have a function $\incentive(\bar{a}) = (\incentive_1(\bar{a}), \ldots, \incentive_n(\bar{a}))$ which takes the place of $a_i f_i(a)$ in the Moran process. See Table \ref{incentives_table} for a variety of example incentives. We will denote the normalized distribution derived from the incentive function as $\bar{\incentive}$.

The incentive process is a Markov process on the population states defined by the following transition probabilities, corresponding to a birth-death process where birth is incentive-proportionate with mutation and death is uniformly random. To define the adjacent population states, let $i_{\alpha \beta}$ be the vector that is 1 at index $\alpha$, -1 at index $\beta$, and zero otherwise, with the convention that $i_{\alpha \alpha}$ is the zero vector of length $n$. Every adjacent state of state $a$ for the incentive process is of the form $a + i_{\alpha \beta}$ for some $1 \leq \alpha, \beta \leq n$. At a population state $a$ we choose an individual of type $A_i$ to reproduce proportionally to its incentive, allowing for mutation of the new individual as given by the mutation probabilities. The distribution of incentive proportionate selection probabilities is given by $p(\bar{a}) = M(\bar{a}) \bar{\incentive}(\bar{a})$; explicitly, the $i$-th component is
\begin{equation}
p_i(\bar{a}) = \frac{\sum_{k=1}^{n}{\incentive_k(\bar{a}) M_{k i} }}{\sum_{k=1}^{n}{\incentive_k(\bar{a})}}
% p(i) = \frac{\incentive_A(i) (1 - \mu_{AB}(i)) + \incentive_B(i) \mu_{BA}(N-i)}{\incentive_A(i) + \incentive_B(i)}
\label{incentive-proportionate-reproduction} 
\end{equation}
We also randomly choose an individual to be replaced, just as in the Moran process. This yields the transition probabilities
\begin{align}
T_{a}^{a + i_{\alpha, \beta}} &= p_{\alpha}(\bar{a}) \bar{a}_{\beta} \qquad \text{for $\alpha \neq \beta$} \notag \\
T_{a}^{a} &= 1 - \sum_{b \text{ adj } a, b\neq a}{T_{a}^{b}}
\label{incentive_process}
\end{align}

We will mainly consider incentives that are based on fitness landscapes of the form $f(\bar{a}) = A \bar{a}$ for a game matrix $A$; some authors do not allow self-interaction and use fitness landscapes of the form (e.g.):
\begin{align*}
f_1(i) &= \frac{a(i-1) + b(N-i)}{N-1} \\
f_2(i) &= \frac{ci + d(N-i-1)}{N-1}
\end{align*}
for a game matrix defined by
\[ A = \left( \begin{smallmatrix}
 a & b\\
 c & d
\end{smallmatrix} \right) \]

For our purposes the difference will have little impact. Though we primarily investigate two one-parameter families of incentives defined in terms of a fitness landscape, we note that the incentive need not depend on a fitness landscape or the population state at all. The Fermi process of Traulsen et al is the $q$-Fermi for $q=1$ \cite{traulsen2009stochastic}, and $q=0$ is called the logit incentive, which is used in e.g. \cite{andrae2010entropy}. The $q$-replicator incentive has previously been studied in the context of evolutionary game theory \cite{harper2011escort} \cite{harper2012stability} and derives from statistical-thermodynamic and information-theoretic quantities \cite{tsallis1988possible}. Recently human population growth in Spain has been shown to follow patterns of exponential growth with scale-factors $q \neq 1$ \cite{hernando2013workings}. Other than the best-reply incentive, which we will consider primarily as a limiting case of the $q$-Fermi, we will assume that all incentives are positive definite $\incentive(k) > 0$ if $0 < k < N$ to avoid the restatement of trivial hypotheses in the results that follow. For the $q$-Fermi incentives, positivity is of course guaranteed for all landscapes. This is particularly convenient for three-type dynamics since the mean-fitness can frequently be zero (for zero-sum games such as the rock-paper-scissors game), which would cause the transition probabilities to be ill-defined. We will also typically assume that incentives are continuous in the population distribution, that is, that they are discretized versions of continuous functions on the probability simplex.

At first glance the introduction of incentives might seems like just an alteration of the fitness landscape. While it is true that on the interior of the probability simply many incentives can be re-written as nonlinear fitness functions, there are significant advantages to the change in perspective from fitness-proportionate selection to incentive proportionate-reproduction. In particular, the authors showed in \cite{harper2012stability} that incentives ultimately lead to a deeper understanding of evolutionary stability and a substantial improvement in the ability to find Lyapunov functions (via formal and geometric considerations) for a wide-range of evolutionary dynamics. Moreover, when dynamics interact with the boundary, such as innovative and non-forward invariant dynamics, incentives yield a better description of evolutionary stability. Finally, we can study how a population interacts with the same fitness landscape by varying the incentive function and its parameters rather than the fitness landscape itself.

\begin{figure}[h]
    \centering
    \begin{tabular}{|c|c|}
        \hline
        Dynamics & Incentive \\ \hline
        Projection & $\incentive_i(\bar{a}) = f_i(\bar{a})$\\ \hline
        Replicator & $\incentive_i(\bar{a}) = \bar{a}_i f_i(\bar{a})$\\ \hline
        $q$-Replicator & $\incentive_i(\bar{a}) = \bar{a}_i^{q} f_i(\bar{a})$\\ \hline
%         Logit & $\displaystyle{ \incentive_i(a) = \frac{\text{exp}(\beta f_i(a))}{\sum_{j}{\text{exp}(\beta f_j(a))}}}$ \\ \hline
%         Fermi & $\displaystyle{ \incentive_i(a) = \frac{\bar{a}_i \text{exp}(\beta f_i(a))}{\sum_{j}{\bar{a}_j \text{exp}(\beta f_j(a))}}}$ \\ \hline
        $q$-Fermi & $\displaystyle{ \incentive_i(\bar{a}) = \frac{\bar{a}_i^{q} \text{exp}(\beta f_i(\bar{a}))}{\sum_{j}{\bar{a}_j^{q}\text{exp}(\beta f_j(\bar{a}))}}}$ \\ \hline
        Best Reply & $\incentive_i(\bar{a}) = \bar{a}_i BR_i(\bar{a})$ \\ \hline
    \end{tabular}
    \caption{Incentives for some common dynamics. The projection incentive \cite{sandholm2008projection} is simply the $q$-replicator with $q=0$. Another way of looking at the projection incentive on the Moran landscape is simply as a \emph{constant incentive}, since the incentive itself is a constant function in $i$. The logit incentive is the $q$-Fermi with $q=0$. For more examples see Table 1 in \cite{fryer2012existence}.}
    \label{incentives_table}
\end{figure}

\subsection{The Wright-Fisher Process}

In contrast to the Moran process, which models a population in terms of individual birth-death events, the Wright-Fisher process is a generational model of evolution \cite{imhof2006evolutionary} \cite{ewens2004mathematical}. Each successive generation is formed by sampling, proportionally to incentive, the current generation. Define the Wright-Fisher Process with mutation for evolutionary games by the following multinomial transition probabilities:
\begin{equation} \label{wright_fisher_process}
% T_{i \to j} = \binom{N}{j} \left(p(i)\right)^j \left(1 - p(i) \right)^{N-j},
T_{a}^{b} = \binom{N}{b} \prod_{i}{ p_i(\bar{a})^{b_i}},
\end{equation}
where $p_i(\bar{a})$ is defined as before in Equation \ref{incentive-proportionate-reproduction}. This is a slight generalization of the basic process as given by Imhof and Nowak to include mutation, though we will not consider parameters for intensity of selection via the $1-w$, $w$ linear combination approach as in \cite{imhof2006evolutionary}. In contrast to the Moran process, the Wright-Fisher process is not tri-diagonal, rather every state is accessible from every other state, so long as the incentive-proportionate probabilities are non-zero.

\subsection{$n$-fold Incentive process}

Since the Wright-Fisher process is a generational process, replacing the entire population in each iteration, and the incentive process is atomic process, they can exhibit very different behaviors. Consider the following process, which will be referred to as the $k$-fold incentive process, as an intermediate between the two types of processes. Define each step of the process as $k$ steps of the Moran process, so that $k=1$ is the Moran process, and $k=N$, where $N$ is the population size, yields a \emph{generational} processes derived from the incentive process. The crucial difference is the simultaneity of the replication events, as all occur instantaneously for the Wright-Fisher process, so the transition probabilities for the $k$-fold process differ in general.

The transition probabilities of the $k$-fold process can be computed directly from the transition matrix of the incentive process (equation \ref{incentive_process}) by simply computing the $k$-th power of the transition matrix. The entries of the transition matrix, $\left(T^k\right)_{a}^{b}$ correspond to the probability of moving from population state $a$ to population state $b$ in exactly $k$ steps of the Moran process.

\subsection{Stationary Distributions}

For a deterministic dynamic there are notions of long-term behavior, such as an $\omega$-limit set\cite{hofbauer1998evolutionary}, and powerful tools such as the Lyapunov stability theorems. For Markov processes we lack any notion of deterministic behavior since the population trajectories can only be determined probabilistically from previous states. We do, however, have a tool that does not exist in the deterministic setting: the stationary distribution of the Markov process. The stationary distribution is a probability distribution on the states of the process, indicating the likelihood of finding the population in any particular state in the long-run. It is characterized in multiple ways, of which we will use that facts that (1) the stationary distribution $s$ is the left eigenvector of the transition matrix with eigenvalue one, that is $s = sT$, and (2) the rows of the limit $T^k x$ convergence to the stationary distribution for any starting distribution $x$ as $k \to \infty$. 

Stationary distributions for the Moran process in two dimensions and some recently-studied generalizations are given in several recent works \cite{claussen2005non} \cite{antal2009strategy}  \cite{taylor2004evolutionary} \cite{khare2009rates} \cite{gokhale2011strategy} \cite{allen2012measures} \cite{wu2013dynamic}; stationary distributions for both the Moran process and the Wright-Fisher process have been studied in \cite{huillet2010discrete}; a number of formulas for various finite population processes are given in \cite{sandholm2007simple} and \cite{sandholm2014large}. The $n=2$ solution only relies on the fact that the transition matrix is tridiagonal, and so applies to the incentive process with mutation without modification. The components $s_a$ of the stationary distribution satisfy the detailed-balance condition $s_a T_{a}^{b} = s_{b} T_{b}^{a}$ (also known as the local balance condition) and such processes have a stationary distribution given by the following formula. For any sequence of states $j_0, j_1, \ldots, j_k$,
\begin{equation}
s_{j_k} = s_{j_0} \prod_{i=0}^{k-1}{\frac{T_{j_i}^{j_{i+1}}}{T_{j_{i+1}}^{j_{i}}}}
% s_{(j, N-j)} = s_{(0,N)} \prod_{i=0}^{j-1}{\frac{T_{(i, N-i)}^{(i+1, N-i-1)}}{T_{(i+1, N-i-1)}^{(i, N-i)}}},
% s_{(j, N-j)} = s_{(0,N)} \prod_{i=0}^{j-1}{T_{(i, N-i)}^{(i+1, N-i-1)} / T_{(i+1, N-i-1)}^{(i, N-i)}},
\label{s_j}
\end{equation}
where $s_{j_0}$ can be obtained from the normalization $\sum_i{s_{i}} = 1$.
% \begin{equation}
% s_0 = \left(1 + \sum_{j=1}^{N}{\prod_{i=0}^{j-1}{\frac{T_{i \to i+1}}{T_{i+1 \to i}}}}\right)^{-1}
% \label{s_0}
% \end{equation}
This particular formulation relies on the fact that the transitions between neighboring states are nonzero, which we assume is valid throughout. For games with more than two types, none of the processes described so far satisfy the detailed-balance condition in general \cite{taylor2006symmetry}, though the neutral landscape is reversible for some choices of mutation matrix. It is, however, always the case that the global balance conditions are satisfied:
\begin{equation}
s_a \sum_{a \text{ adj } b}{T_{a}^{b}} = \sum_{b \text{ adj } a} {s_{b} T_{b}^{a}},
\label{global_balance_equation}
\end{equation}
which follows easily from the fact that the stationary distribution is the right eigenvector of the transition probability matrix $s T = s$. Though we will not be able to give explicit closed forms in the higher dimensional cases (in general they are not known), the global balance equations will be sufficient for our analytic purposes. Exact analytic forms for the stationary distribution of the Wright-Fisher process are similarly not known. Nevertheless, from a computational perspective, for any concrete values of the various parameters of these processes, the stationary distribution can be computed fairly efficiently for relatively large populations.

% \subsection{Incentive Stable States}
% 
% The natural generalization of an evolutionarily stable state to a process driven by an incentive is an \emph{incentive stable state} (ISS) \cite{fryer2012existence}. For the continuous incentive dynamic on the simplex, the ISS condition is simply \[\sum_i{\frac{\ess{x}_i - x_i}{x_i} \incentive_i(x)} > 0,\]
% which gives the familiar evolutionarily stable state condition (ESS) $\hat{x} \cdot f(x) > x \cdot f(x)$ for the replicator incentive $\incentive_i(x) = x_i f_i(x)$. Fryer showed that the ISS condition for the best reply dynamic is again ESS, and in \cite{harper2012stability}, for 2x2 linear games, it was shown that an ISS for the projection dynamic is again an ESS, that is satisfies the equation $x_1 = (d-b) / (a-c + d-b)$, which is expected from the known Lyapunov theorems \cite{harper2012stability} \cite{sandholm2008projection}.
% 
% An analog of ESS exists for the Moran process \cite{nowak2006evolutionary} by setting equal the transition probabilities and solving for the state $i$, which yields
% \begin{equation}\label{ess_finite}
%  \frac{i}{N} = \frac{d-b + \frac{a-d}{N}}{a-c+d-b} 
% \end{equation}
% %
% For the incentive process, we can find the ISS analogously by solving $(N-i) \incentive_A(i) = i \incentive_B(i)$. It is easy to see that this relation can lead to more interesting interior dynamics, since there can be multiple interior ISS while there can be at most one ESS for the Moran process with linear fitness determined by a game matrix. 

\section{Notions of Stability}
Let us pursue notions of stability for these Markov processes. Intuitively, a population state is stable if there is a reluctance of the population to move from that state. We characterize this in multiples ways. Evolutionary stability has been studied extensively and is closely related to the concept of a Nash equilibrium, which can be loosely described as a state in which no strategy has incentive to deviate from. In \cite{fryer2012existence}, Fryer defines an incentive stable state (ISS) as a generalization of evolutionary stability, and shows that there is a generalization of the well-known Lyapunov theorem for the replicator equation. The authors ported the notion of an incentive stable state to the incentive process in \cite{harper2013incentive}. In some cases an ISS for a particular incentive is again an ESS, such as for the best-reply incentive, and the Fermi incentive. We will give a generalization of those definitions in this work for completeness.

From the Moran process, an analogous form for the standard equation for an interior evolutionary stable state (ESS) for a 2x2 game can be obtained by equating the fitness functions of the two types in the population, matching the fact that for the replicator equation any interior rest point has equal fitness across all types \cite{nowak2006evolutionary}. This can similarly be obtained by equating the transition probabilities $T_{(i, N-i)}^{(i+1, N-i-1)} = T_{(i, N-i)}^{(i-1, N-i+1)}$ for the Moran process, which also reproduces the correct form of Fryer's ISS in equality for the incentive process, and adds $i=0$ and $i=N$ to set of potential stable points (for the Moran process). Since we do not (yet) have inequality in a neighborhood (as in the definition of a standard ESS) from this derivation we will refer to such states as \emph{candidate ISS}. Explicitly, we define a \emph{candidate ISS} as a population state $a$ such that $\bar{a} = p(\bar{a})$. For $\mu = 0$ and the replicator incentive, this definition implies that $f_i(\bar{a}) = f_j(\bar{a})$ for all $i$ and $j$, which is typical for an ESS. Note that solutions of the ISS candidate equation in a finite population will often not be integral.

% For mutations $\mu_{AB}(i) = 0 = \mu_{BA}$, Equation \ref{iss} matches the definition used by the authors in \cite{}, namely $\incentive_A(i)(N-i) - \incentive_B(i) i = 0$. For nonzero mutations, we have that
% \[\incentive_A(i)(N-i) - \incentive_B(i) i = N(\mu_{AB} \incentive_A(i) - \mu_{BA} \incentive_B(i) ),\]
% which modifies the ISS candidate condition to include a ``penalty'' to achieving selective-optimum on the right-hand side. For boundary-only mutations, the right-hand-side is zero, corresponding to the usual notion of ISS or ESS (as is the case in \cite{claussen2005non}).

Although we lack a Lyapunov theory for Markov processes, it is nevertheless desirable to have a quantity that is locally minimal at stable states and decreasing locally otherwise as an indicator of stability. Such a function yields another method of computation of candidate ISS without having to solve the ISS candidate equation, which might be very difficult. We combine this desire with the idea that a stable state for a stochastic process should \emph{remain close} to itself by defining the \emph{expected distribution} resulting from a single iteration of the incentive process
\[E(\bar{a}) = \frac{1}{N}\sum_{b \text{ adj a}}{b T_{a}^{b}}. \]
We then consider distance functions such as $D_1(a) = D_{KL}(E(\bar{a}) || \bar{a})$ where $D_{KL}(x, y) = \sum_{i}{x_i \log{(x_i / y_i)}}$ is the relative entropy or Kullback-Leibler divergence, a commonly-used measure of distance between probability distributions.

For this purpose, we define the following one-parameter distance function
\begin{equation} \hat{D}_d(x, y) = \begin{cases}
\frac{1}{2}||x - y||^2 & \text{if } d=0 \\
D_{KL}(x, y) & \text{if } d=1 \\
\frac{1}{1-d}\sum_{i}{\left[ \frac{x_i^{2-d} - y_i^{2-d}}{2-d} - y_i^{1-d}(x_i-y_i)\right]} & \text{if } 0 < d < 1 \\
\end{cases}
\label{distance_function}
\end{equation}
This family has the property that $\hat{D}_{d_1} < \hat{D}_{d_2}$ if $d_1 < d_2$, which is an simple consequence of a more general definition (see \cite{harper2011escort}), and all have the property that $\hat{D}_d(x,y) = 0$ iff $x=y$ (positive definite). We will also make use of the fact that they are all bounded above (on our state spaces) by the $\chi$-squared distance \cite{sayyareh2011new}
\[ D_{\chi^2}(x, y) = \sum_{i}{\frac{\left(x_i - y_i\right)^2}{x_i}}.\]

In what follows, we will write $D_d(a)$ for $\hat{D}_d(E(\bar{a}), \bar{a})$. This brings us to our first propositions. The first gives two forms for the expected distribution of the incentive, both of which are simple algebraic consequences of the definitions given so far. For two-type populations, the second form is, explicitly,

\[ E(\bar{a}) = \bar{a} + \frac{1}{N} \left(T_{(i, N-i)}^{(i+1, N-i-1)} - T_{(i, N-i)}^{(i-1, N-i+1)}\right) \left[ \begin{matrix} 1 \\ -1 \end{matrix} \right] \]

\begin{proposition}
For the incentive process, we have two forms for the expected distribution:
\begin{enumerate}
 \item \ds{ E(\bar{a}) = \frac{N-1}{N} \bar{a} + \frac{1}{N} p(\bar{a})} \\
 \item \ds{ E(\bar{a}) = \bar{a} + \frac{1}{N} \sum_{\alpha, \beta, \alpha \neq \beta}{i_{\alpha \beta} T_{a}^{a + i_{\alpha \beta}}} = \bar{a} + \frac{1}{N} \sum_{\alpha, \beta, \alpha < \beta}{i_{\alpha \beta} \left(T_{a}^{a + i_{\alpha \beta}} - T_{a}^{a - i_{\alpha \beta}} \right)  }  }
\end{enumerate}
\label{expected_incentive}
\end{proposition}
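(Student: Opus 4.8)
The plan is to derive both forms directly from the transition probabilities in Equation \eqref{incentive_process}, starting from the definition $E(\bar{a}) = \frac{1}{N}\sum_{b\text{ adj }a} b\,T_a^b$. First I would split the sum over adjacent states into the self-loop term $b = a$ and the genuine moves $b = a + i_{\alpha\beta}$ with $\alpha \neq \beta$. Writing $b = a + i_{\alpha\beta}$ and using $\sum_b T_a^b = 1$, the self-loop contribution combines with the others to give
\[
N\,E(\bar{a}) = a\cdot T_a^a + \sum_{\alpha\neq\beta}(a + i_{\alpha\beta})\,T_a^{a+i_{\alpha\beta}} = a\sum_{b}T_a^b + \sum_{\alpha\neq\beta} i_{\alpha\beta}\,T_a^{a+i_{\alpha\beta}} = a + \sum_{\alpha\neq\beta} i_{\alpha\beta}\,T_a^{a+i_{\alpha\beta}},
\]
which upon dividing by $N$ is precisely the first equality of form (2). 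The second equality of form (2) is just a reindexing: pairing the term for $(\alpha,\beta)$ with the term for $(\beta,\alpha)$ and using $i_{\beta\alpha} = -i_{\alpha\beta}$ and $a - i_{\alpha\beta} = a + i_{\beta\alpha}$, so $i_{\alpha\beta}T_a^{a+i_{\alpha\beta}} + i_{\beta\alpha}T_a^{a+i_{\beta\alpha}} = i_{\alpha\beta}\bigl(T_a^{a+i_{\alpha\beta}} - T_a^{a-i_{\alpha\beta}}\bigr)$, and then summing over $\alpha < \beta$.

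For form (1), I would substitute $T_a^{a+i_{\alpha\beta}} = p_\alpha(\bar{a})\,\bar{a}_\beta$ into the expression just obtained and compute the $\gamma$-th component. Since $i_{\alpha\beta}$ contributes $+1$ in coordinate $\alpha$ and $-1$ in coordinate $\beta$, the coordinate-$\gamma$ part of $\sum_{\alpha\neq\beta} i_{\alpha\beta} p_\alpha(\bar a)\bar a_\beta$ is $\sum_{\beta\neq\gamma} p_\gamma(\bar a)\bar a_\beta - \sum_{\alpha\neq\gamma} p_\alpha(\bar a)\bar a_\gamma = p_\gamma(\bar a)(1 - \bar a_\gamma) - \bar a_\gamma(1 - p_\gamma(\bar a)) = p_\gamma(\bar a) - \bar a_\gamma$, using $\sum_i \bar a_i = \sum_i p_i(\bar a) = 1$. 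Hence $N\,E(\bar a) = a + \bigl(p(\bar a) - \bar a\bigr)$, i.e. $E(\bar a) = \frac{N-1}{N}\bar a + \frac1N p(\bar a)$, which is form (1). The displayed two-type formula preceding the statement is then the $n=2$ specialization of the second equality of form (2), since there is only the single pair $\alpha=1,\beta=2$.

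This is entirely a bookkeeping argument, so there is no real obstacle; the one place to be careful is the handling of the self-loop term $T_a^a$ and the normalization $\sum_b T_a^b = 1$, making sure the diagonal entry is correctly folded in rather than double-counted, and keeping the sign conventions on $i_{\alpha\beta}$ consistent (including $i_{\alpha\alpha} = 0$, so extending sums to include $\alpha=\beta$ changes nothing). I would also note explicitly that both identities use only $\sum_i p_i(\bar a) = 1$, which holds because $M(\bar a)$ is a stochastic matrix and $\bar\incentive(\bar a)$ is a probability distribution, so no positivity or continuity hypotheses on the incentive are needed here.
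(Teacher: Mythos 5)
Your proof is correct and is exactly the direct algebraic verification the paper has in mind: the paper gives no explicit proof, stating only that both forms are ``simple algebraic consequences of the definitions,'' and your bookkeeping (folding the self-loop term into $\sum_b T_a^b = 1$, pairing $(\alpha,\beta)$ with $(\beta,\alpha)$, and using row-stochasticity of $M$ to get $\sum_i p_i(\bar a)=1$) supplies precisely those omitted details. The only nitpick is the mild abuse of notation in writing $N\,E(\bar a) = a + (p(\bar a)-\bar a)$, which mixes the integer tuple $a$ with simplex-scaled quantities, but the conclusion $E(\bar a) = \frac{N-1}{N}\bar a + \frac1N p(\bar a)$ is right.
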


\begin{proposition}
For both the incentive process and the Wright-Fisher process, the following are equivalent:
\begin{enumerate}
 \item $\bar{a} = p(\bar{a})$ (ISS Candidate definition)
 \item $E(\bar{a}) = \bar{a}$, that is, $\bar{a}$ is a fixed point of $E$
 \item $D_d(a) = 0$ for all $0 \leq d \leq 1$
\end{enumerate}
\label{prop_expected_next}
\end{proposition}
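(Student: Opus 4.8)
The plan is to reduce everything to the explicit formula for $E(\bar a)$ together with the positive-definiteness of the distances $\hat D_d$. For the incentive process, Proposition \ref{expected_incentive}(1) already supplies $E(\bar a) = \frac{N-1}{N}\bar a + \frac1N p(\bar a)$, so $E(\bar a) = \bar a$ holds exactly when $\frac1N p(\bar a) = \frac1N \bar a$, i.e.\ exactly when $p(\bar a) = \bar a$; this is the equivalence $(1)\Leftrightarrow(2)$ in the incentive case. For the Wright--Fisher process I would first observe that the transition law $b\mapsto T_a^b$ of Equation \ref{wright_fisher_process} is precisely the multinomial distribution with $N$ trials and cell-probability vector $p(\bar a)$. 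Its mean vector is $N p(\bar a)$, so $E(\bar a) = \frac1N\sum_b b\,T_a^b = p(\bar a)$, and again $E(\bar a)=\bar a$ if and only if $p(\bar a)=\bar a$, which gives $(1)\Leftrightarrow(2)$ in the Wright--Fisher case. (A minor bookkeeping point: in the incentive case the sum defining $E$ runs only over states adjacent to $a$, but this is exactly the content of Proposition \ref{expected_incentive}(2), so no separate argument is needed.)

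For $(2)\Leftrightarrow(3)$ I would invoke the fact, recorded after Equation \ref{distance_function}, that every $\hat D_d$ is positive definite: $\hat D_d(x,y)=0$ if and only if $x=y$. If $E(\bar a)=\bar a$, then $D_d(a)=\hat D_d(E(\bar a),\bar a)=\hat D_d(\bar a,\bar a)=0$ for every $d\in[0,1]$, proving $(2)\Rightarrow(3)$. Conversely, if $D_d(a)=0$ for all $0\le d\le 1$ --- in fact for any single such $d$ --- then positive-definiteness forces $E(\bar a)=\bar a$, giving $(3)\Rightarrow(2)$. Chaining this with the previous paragraph yields the full cycle of equivalences.

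There is no real obstacle here; the only genuine computation is identifying the Wright--Fisher kernel as a multinomial and using its mean. If one wishes to avoid quoting the multinomial mean as a black box, the identity $\sum_b b\,T_a^b = N p(\bar a)$ follows either by differentiating the multinomial theorem $\bigl(\sum_i p_i\bigr)^N = \sum_b \binom{N}{b}\prod_i p_i^{b_i}$ in each $p_i$, or by writing the next generation as $N$ independent draws from $p(\bar a)$ and applying linearity of expectation. Either derivation is routine, so I would compress it to a single line in the write-up.
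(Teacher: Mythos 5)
Your proposal is correct and follows essentially the same route as the paper's own proof: the incentive-process case via the formula $E(\bar a)=\frac{N-1}{N}\bar a+\frac{1}{N}p(\bar a)$ from Proposition \ref{expected_incentive}, the Wright--Fisher case via the mean of the multinomial distribution, and $(2)\Leftrightarrow(3)$ via positive-definiteness of $\hat D_d$. You have merely written out the details that the paper compresses into two sentences.
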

\begin{proof}
(2) For Wright-Fisher follows from the fact that the expected value of a multinomial distribution is just $p(\bar{a})$. The others follow easily from the previous proposition and Equation \ref{incentive-proportionate-reproduction}.
\end{proof}

For the $k$-fold incentive process the natural notion of expected distribution $E^k(a)$ is the $k$-th iterate of $E$ as defined for the incentive process. We will not give a closed form for $E^k$; the following proposition will suffice for our purposes, and is also an easy consequence of our definitions.

\begin{proposition}
Let $E(\bar{a})$ be defined as for the incentive process. Then for the $k$-fold incentive process, $E^k(\bar{a}) = \bar{a}$ if and only if $E(\bar{a}) = \bar{a}$.
\end{proposition}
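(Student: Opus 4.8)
The plan is to prove the two implications separately, the reverse one being the immediate one. If $E(\bar a) = \bar a$, then applying $E$ repeatedly gives $E^{j}(\bar a) = \bar a$ for all $j \geq 1$, hence $E^{k}(\bar a) = \bar a$; this is a one-line induction that uses only that, for the $k$-fold incentive process, $E^{k}$ denotes the $k$-fold composition of the single-step map $E$ of the incentive process.

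For the forward implication the real content is to rule out periodic orbits of $E$ of period larger than one, so that a fixed point of $E^{k}$ must already be a fixed point of $E$ (and then, by Proposition~\ref{prop_expected_next}, an ISS candidate). I would start from the first form in Proposition~\ref{expected_incentive}, namely $E(\bar a) - \bar a = \tfrac{1}{N}\bigl(p(\bar a) - \bar a\bigr)$, so that $E = \tfrac{N-1}{N}\,\mathrm{id} + \tfrac{1}{N}\,p$ is a convex interpolation between the identity and the simplex-valued map $p$. Writing $x_{0} = \bar a$ and $x_{j+1} = E(x_{j})$, the hypothesis $x_{k} = x_{0}$ telescopes to $\sum_{j=0}^{k-1}\bigl(p(x_{j}) - x_{j}\bigr) = 0$. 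The strategy is then to produce a quantity that is non-increasing along the orbit and strictly decreasing away from fixed points, so that on a cycle it is constant and every orbit point is fixed. The natural candidate is $D_{0}(x) = \tfrac12\|E(x) - x\|^{2} = \tfrac{1}{2N^{2}}\|p(x) - x\|^{2}$, which by Proposition~\ref{prop_expected_next} vanishes exactly at fixed points, and the key algebraic identity, read off from the two forms above, is $E(x_{j+1}) - x_{j+1} = \tfrac{N-1}{N}\bigl(E(x_{j}) - x_{j}\bigr) + \tfrac{1}{N}\bigl(p(x_{j+1}) - p(x_{j})\bigr)$, which controls $\|E(x_{j+1}) - x_{j+1}\|$ by $\|E(x_{j}) - x_{j}\|$ plus the one-step oscillation of $p$.

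The hard part will be the strictness. The triangle-inequality estimate only gives $\|E(x_{j+1}) - x_{j+1}\| \le \|E(x_{j}) - x_{j}\|$ under the additional information that $p$ does not expand distances (using $\|x_{j+1} - x_{j}\| = \|E(x_{j}) - x_{j}\|$), and even then one must exclude a genuine \emph{neutral} oscillation — the analogue of a period-two orbit of a one-dimensional map — which would force $p(x_{j+1}) - p(x_{j})$ to be positively parallel to $E(x_{j}) - x_{j}$ and of equal length all the way around the cycle. I would rule this out using the standing hypothesis that each incentive is the discretization of a fixed continuous map on the compact simplex, together with the mild regularity available in the examples of interest (Lipschitz, with non-expansive behaviour on the region swept out by the orbit), so that chaining the equality constraints around the cycle is impossible unless the cycle is a single point; for the two-type process, where the transition matrix is tridiagonal, one can argue this directly from the sign behaviour of $p_{1}(\bar a) - \bar a_{1}$. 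Assembling these pieces identifies the fixed points of $E^{k}$ with those of $E$, hence with the ISS candidates, which is the claimed equivalence.
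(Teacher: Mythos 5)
The paper offers no argument for this proposition at all---it is asserted to be ``an easy consequence of our definitions''---so there is nothing to compare your proof against except the claim itself. Your reduction is the right one: the reverse implication is a one-line induction, the identities $E=\tfrac{N-1}{N}\,\mathrm{id}+\tfrac{1}{N}\,p$, the telescoping $\sum_{j=0}^{k-1}\bigl(p(x_j)-x_j\bigr)=0$, and the recursion for $E(x_{j+1})-x_{j+1}$ are all correct, and you have correctly identified that the forward implication is exactly the statement that $E$ has no periodic orbits of period greater than one. But the step you flag as ``the hard part'' is not a technicality that continuity will deliver: under the paper's standing hypotheses (continuity of the incentive and mutation matrix, positivity on the interior) the forward implication is in fact \emph{false}. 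For two types, identify a state with $x\in[0,1]$; a period-two orbit $\{x,y\}$ of $E$ exists precisely when $p(x)-x=N(y-x)$ and $p(y)-y=-N(y-x)$, and one can choose a continuous $p$ with values strictly inside $(0,1)$ realizing this (e.g.\ $N=10$, $p(0.5)=0.9$, $p(0.54)=0.14$, interpolated continuously), giving $E^2(x)=x$ with $E(x)=y\neq x$. Note also that iterating $E$ leaves the lattice of population states, so the question really is about the continuous dynamical system on the simplex. Consequently the ``mild regularity available in the examples of interest'' that you invoke is not mild at all: a quantitative bound on $p$ is the missing hypothesis, not something derivable from the paper's assumptions, and the proposition as stated needs such a hypothesis to be true.

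To your credit, once such a hypothesis is granted your Lyapunov scheme does close cleanly, and it is worth recording how. If $p$ is non-expansive, then $\|E(x_{j+1})-x_{j+1}\|\le\tfrac{N-1}{N}\|E(x_j)-x_j\|+\tfrac{1}{N}\|p(x_{j+1})-p(x_j)\|\le\|E(x_j)-x_j\|$ using $\|x_{j+1}-x_j\|=\|E(x_j)-x_j\|$; constancy around the cycle forces equality in both the triangle inequality and the non-expansion at every step, hence $p(x_{j+1})-p(x_j)=x_{j+1}-x_j$, so $p(x_j)-x_j$ is constant along the orbit and the telescoped sum forces that constant to be zero, i.e.\ every orbit point is fixed. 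More simply, in the two-type case your ``sign behaviour'' remark is best formalized as monotonicity: if $p$ has Lipschitz constant at most $N-1$, then $E$ is a non-decreasing interval map, its orbits are monotone, and periodic points are fixed; my counterexample above has local Lipschitz constant $2N-1$, which is exactly the threshold at which period two becomes possible. So the correct conclusion of your review of this proposition should be that it requires an explicit Lipschitz-type restriction on the incentive (automatic for, say, linear landscapes with bounded payoffs and large $N$, but not for arbitrary continuous incentives), and the paper's ``easy consequence'' claim elides this.
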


Hence for all three processes we have that ISS candidates are equivalent to fixed points of the expected distribution function. This definition captures the usual notion of ISS/ESS as follows. We can write (loosely) that $\bar{a} = p(\bar{a}) \approx \bar{\incentive}(\bar{a}) (1 - \mu) + \mu / (n-1) \mathbf{1}$, where $n$ is the number of types in the population, so that if e.g. $\mu \sim O(1/N)$ or is otherwise relatively small, we have asymptotically that $\bar{a} \approx \bar{\incentive}(\bar{a})$, i.e. a fixed point of $\bar{\incentive}$ (and so a candidate for incentive stability). For the replicator incentive, this implies that $f_i(\bar{a}) = f_j(\bar{a})$ for all $i,j$, which is the case for an ESS, as above. Our results are true regardless of the form of the mutation matrix, but the stable points need not coincide with the ISS of the incentive for large or imbalanced mutations (which is a feature). We note that Garcia et al have found that the form of the mutations can significantly alter to locations of stable points \cite{garcia2012structure}.

The stationary distribution yields our final notion of stability. We define a candidate ISS to be an ISS if the stationary distribution at the candidate is locally maximal, and we call such local maxima \emph{stationary stable} in general. Intuitively, a stable state of the dynamic should occur at some notion of a maximum of the stationary distribution since it indicates the likelihood of finding the process in any particular state, in the long run, and a tendency to remain in the state. Our main results are that in many cases, for sufficiently large populations, local maxima and minima of the stationary distribution are ISS candidates.

We note that several authors (e.g. \cite{antal2009strategy} \cite{gokhale2011strategy} \cite{allen2012measures} \cite{wu2013dynamic}) have used the stationary distribution as a solution criterion, looking at average abundance of each population type computed by weighting the population states by the stationary distribution. While the authors of this paper find this to be an interesting approach, we use the stationary distribution to measure the stability of particular population states rather than as a measure on individual types. To see how these differ, consider that for any symmetric process on two types, the average abundance is equal for the two types \cite{antal2009strategy}. For instance, the neutral landscape and the landscape given by $a=1=d$, $b=2=c$ for the replicator incentive would have both have equal abundance for the two types and a stationary distribution with maximum at $(N/2, N/2)$, but so would the neutral landscape as $\mu \to 0$, in which the stationary distribution would be locally maximal at the two fixation states $(0,N)$ and $(N,0)$ \cite{fudenberg2004stochastic} and zero at the central point. In all these cases strategy abundance gives an intuitive aspect of ``equally-likely to be represented'' in the long run, yet taking the average loses information about the manner in which this representation manifests (coexistence of all types or domination by a single type). In all cases the given states are stationary stable.

\begin{figure}[h]
        \begin{subfigure}[b]{0.4\textwidth}
            \centering
            \includegraphics[width=\textwidth]{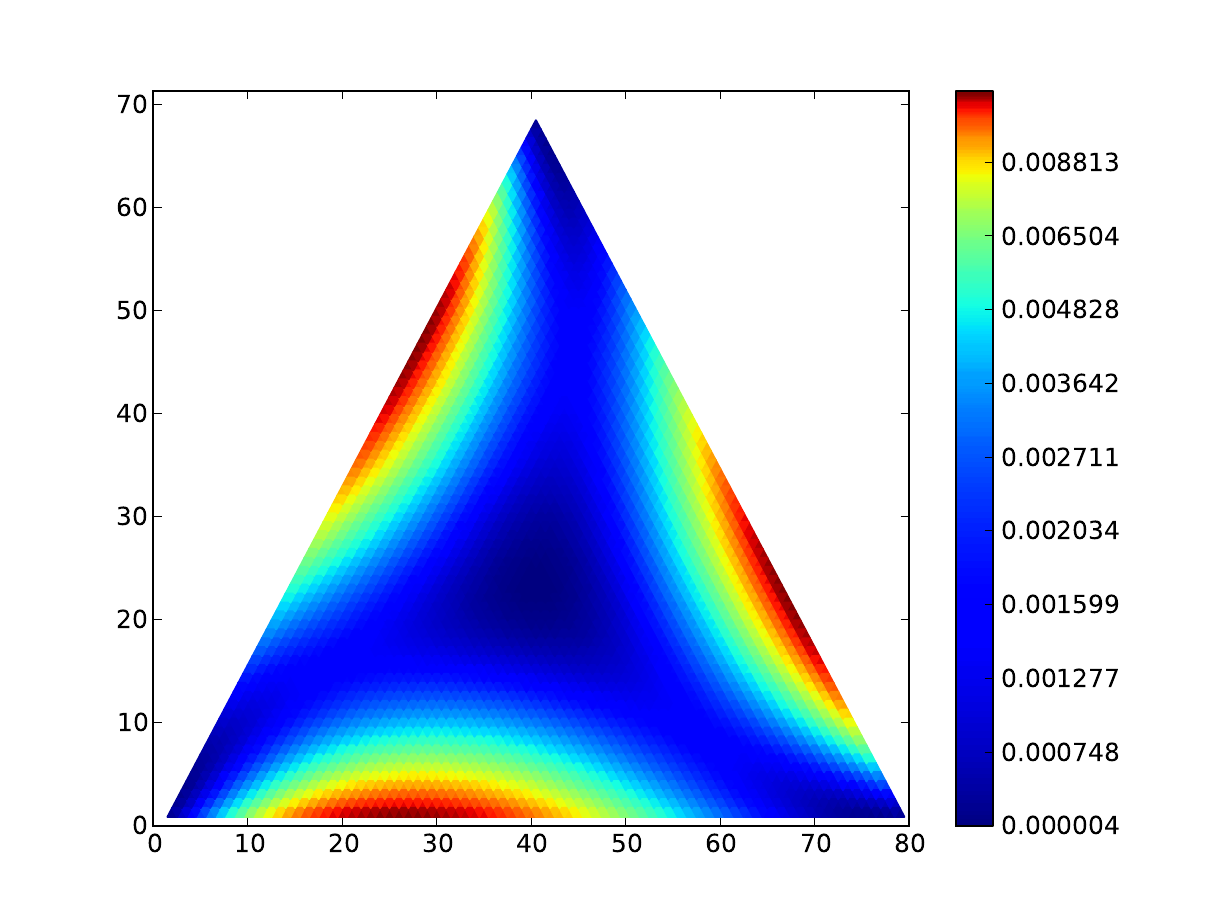}
%             \caption{}
%             \label{}
        \end{subfigure}
%         \qquad
        \begin{subfigure}[b]{0.4\textwidth}
            \centering
            \includegraphics[width=\textwidth]{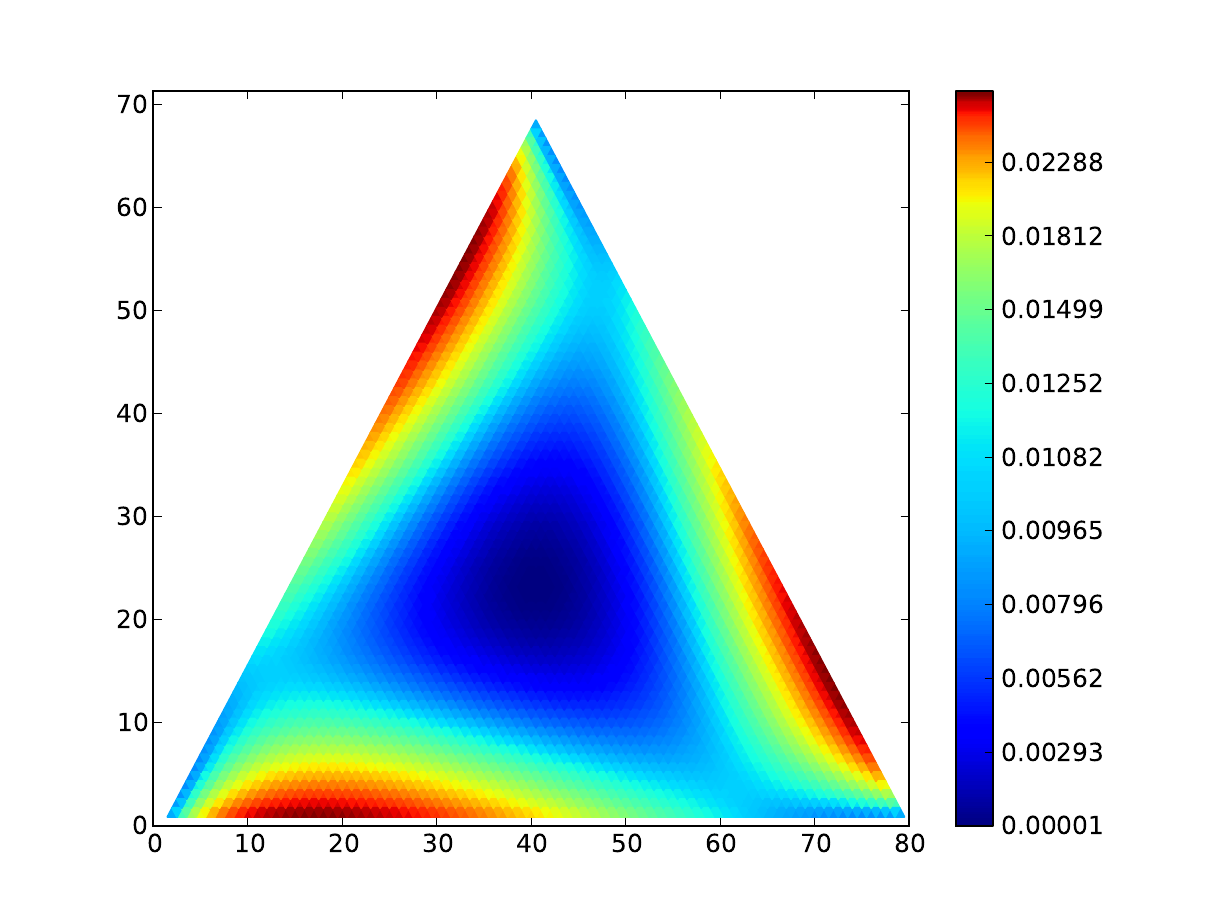}
%             \caption{}
%             \label{}
        \end{subfigure}%
        \caption{Left: $D_0$ for the $40$-fold incentive process, Fermi incentive ($\beta=1$), Rock-Paper-Scissors matrix (Bomze's 17), $N=80$. Local minima occur at the boundary states and at the center. Right: $D_1$ for the same process. Compare to Figure 1.4 in \cite{traulsen2009stochastic}, which depicts a Lyapunov quantity equivalent to the relative entropy for the replicator equation.}
        \label{figure_rsp_lyapunov}
\end{figure} 

\section{Main Theorem for Two-type Incentive Processes}

First we show that when the stationary distribution is locally maximal or minimal, the distance between $E(\bar{a})$ and $\bar{a}$ is minimal. We require a technical lemma to prove the main theorem that concerns continuity of incentive-proportionate replication probabilities. The crux of the proof is essentially that from the stationary distribution we have that if $(i, N-i)$ is local maximum or minimum of the stationary distribution then
\[ T_{(i, N-i)}^{(i+1, N-i-1)} = T_{(i+1, N-i-1)}^{(i, N-i)}. \]
Note there are two directions for the path through the states in Equation \ref{s_j}, ending with $i \to i+1$ or $i \to i-1$, so there is a second solution to the ISS max/min criteria as well (see Table \label{iss_solutions}). To satisfy $E(\bar{a}) = \bar{a}$ we need 
\[ T_{(i, N-i)}^{(i+1, N-i-1)} = T_{(i, N-i)}^{(i-1, N-i+1)}.\]
So we need ``equality'' of the right-hand sides to ensure that these criterion are both satisfied, and we can argue that this is the case for sufficiently large $N$ by appealing to continuity. This should not be surprising -- in both cases, the population is shifting by one in the same direction, and as $N$ gets larger, these points are closer together since $i/N \approx (i+1)/N$ for large $N$.

\begin{lemma}
Suppose the vector of incentive proportionate selection probabilities $p$ as defined in Equation \ref{incentive-proportionate-reproduction} is a continuous function of $\bar{a}$ and let $\epsilon > 0$. Then for the incentive process defined in Equation \ref{incentive_process}, there exists an integer $N'$ such that for $N > N'$ we have that \[\left|T_{(i+1, N-i-1)}^{(i, N-i)} - T_{(i, N-i)}^{(i-1, N-i+1)}\right| < \epsilon.\]
\end{lemma}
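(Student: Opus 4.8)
The plan is to write out both transition probabilities explicitly using Equation \ref{incentive_process} and show that they are each arbitrarily close to a common value—namely, the quantity $p_1(\bar{a})\,\bar{a}_2$ evaluated at (a point near) $\bar{a} = (i/N, (N-i)/N)$—when $N$ is large, so that their difference is small. Concretely, in the two-type setting write $a = (i, N-i)$ and $\bar a = (i/N, (N-i)/N)$. Then
\[
T_{(i+1,N-i-1)}^{(i,N-i)} = p_2\!\left(\tfrac{i+1}{N}, \tfrac{N-i-1}{N}\right)\cdot \tfrac{i+1}{N},
\qquad
T_{(i,N-i)}^{(i-1,N-i+1)} = p_2\!\left(\tfrac{i}{N},\tfrac{N-i}{N}\right)\cdot \tfrac{i}{N},
\]
since in the first transition the reproducing type is $A_2$ (index $\alpha = 2$) replacing an $A_1$ individual from the state $(i+1, N-i-1)$, and in the second transition again $A_2$ reproduces and replaces an $A_1$ from state $(i, N-i)$. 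Both are of the form $p_2(\text{state})\cdot(\text{fraction of }A_1\text{ in that state})$.

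The key steps, in order: first, reduce the difference to two pieces via the triangle inequality,
\[
\left| T_{(i+1,N-i-1)}^{(i,N-i)} - T_{(i,N-i)}^{(i-1,N-i+1)} \right|
\le \left| p_2(\bar b)\,\bar b_1 - p_2(\bar a)\,\bar b_1 \right| + \left| p_2(\bar a)\,\bar b_1 - p_2(\bar a)\,\bar a_1 \right|,
\]
where $\bar b = ((i+1)/N, (N-i-1)/N)$ and I have inserted the intermediate term $p_2(\bar a)\,\bar b_1$. The second piece is bounded by $p_2(\bar a)\,|\bar b_1 - \bar a_1| \le 1/N$, since $p_2 \le 1$ and $\bar b_1 - \bar a_1 = 1/N$. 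For the first piece, bound $\bar b_1 \le 1$ and use that $p$ (hence $p_2$) is continuous on the compact simplex, so it is uniformly continuous; since $\|\bar b - \bar a\| = \sqrt{2}/N \to 0$ uniformly in $i$, there is $N''$ such that $|p_2(\bar b) - p_2(\bar a)| < \epsilon/2$ for all $N > N''$ and all valid $i$. Then take $N' = \max(N'', 2/\epsilon)$ so that both pieces are below $\epsilon/2$ for $N > N'$, giving the claim.

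The one subtlety to be careful about is the \emph{uniformity} in $i$: the estimate must hold simultaneously for every $i$ with $0 \le i \le N$, not just for a fixed sequence $i/N \to x$. This is exactly why uniform continuity of $p$ on the closed simplex (as opposed to mere pointwise continuity) is the right hypothesis—once that is invoked, the modulus of continuity depends only on $\|\bar b - \bar a\|$, which is $O(1/N)$ regardless of $i$. A minor bookkeeping point is to confirm that the states $(i+1, N-i-1)$ and $(i-1, N-i+1)$ appearing here lie in the state space, i.e. that the relevant $i$ ranges exclude the degenerate endpoints where these neighbors do not exist; at those endpoints one or both transition probabilities are zero and the inequality holds trivially. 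Beyond that, the argument is routine once the two transition probabilities are written in the common form $p_2(\cdot)\cdot(\cdot)_1$.
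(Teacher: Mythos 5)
Your proof is correct and takes essentially the same route as the paper: both write the two transitions as the continuous function $p_2(\cdot)\cdot(\cdot)_1$ evaluated at adjacent states a distance $O(1/N)$ apart and invoke uniform continuity on the compact simplex. Your version merely splits the product via the triangle inequality (and is a bit more explicit about uniformity in $i$ and the boundary states), where the paper applies uniform continuity directly to the composite function $p^*(\bar{a}) = (1-p(\bar{a}))*\bar{a}$.
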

\begin{proof}
Since $p$ is a continuous function of $\bar{a}$, so is $p^{*}(\bar{a}) = (1 - p(\bar{a})) * \bar{a}$. Let $\bar{a}^+=\bar{a} + i_{1,2}/2$ and $\bar{a}^- = \bar{a} + i_{2,1}/2$. For $\epsilon > 0$ we simply need to choose $N$ large enough so that the set of population states is sufficiently fine-grained enough so that $|T_{\bar{a}^+}^{\bar{a}} - T_{\bar{a}}^{\bar{a}^-}| = \left|p^*(\bar{a}^+) - p^*(\bar{a})\right| < \epsilon$. Continuous functions on compact spaces (the simplex) are uniformly continuous, so $p^*$ continuous implies there is $\delta > 0$ such that if $|\bar{a} - \bar{b}| < \delta$ then $\left|p^*(\bar{a}) - p^*(\bar{b})\right| < \epsilon$ for all $a$ and $b$. Since $|\bar{a}^+ - \bar{a}| = 2 / N$, $N > N' = 2 / \delta$ is sufficient.
\end{proof}

If the incentive function is continuous as a function of $\bar{a}$, and the mutations are constant or otherwise continuous as functions of $a$, then we have that $p(\bar{a})$ is also continuous, satisfying the hypothesis of the lemma. 

\begin{theorem}
For a continuous incentive and mutation matrix, there is a sufficiently large $N$ so that local maxima and minima of the stationary distribution of the incentive process are local minima of the distance $D_d$ for $0 \leq d < 1$; interior extrema are local minima of the relative entropy $D_1$.
% Suppose that the state $(j, N-j)$ is a maximum or minimum of the stationary distribution. Then there is $i \in (j-1, j+1)$ such that for sufficiently large $N$, $a=(i, N-i)$ is a local minimum of $D(a)$, and converges to an ISS candidate as $N \to \infty$.
\label{main_thm_local}
\end{theorem}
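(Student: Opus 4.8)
The plan is to collapse the statement onto the single scalar $\Delta_i := T_{(i,N-i)}^{(i+1,N-i-1)} - T_{(i,N-i)}^{(i-1,N-i+1)}$. By the two-type specialization of Proposition~\ref{expected_incentive}, $E(\bar a)-\bar a = \frac1N\Delta_i(1,-1)$, so $D_d(a) = \hat D_d\big(\bar a + \tfrac1N\Delta_i(1,-1),\,\bar a\big)$ is a function of $\Delta_i$ and the base point $\bar a = (i/N,(N-i)/N)$ alone. For a fixed base point this is convex in $\Delta_i$ with unique minimum value $0$ at $\Delta_i = 0$ (positive-definiteness of $\hat D_d$ together with the convexity built into its definition; this reproves Proposition~\ref{prop_expected_next}(3)). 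Since the displacement has norm $O(1/N)$, in the interior a Taylor expansion gives $D_d(a) = c_d(\bar a)\Delta_i^2/N^2 + o(1/N^2)$ with $c_d$ positive and continuous; for $d=1$ this coefficient blows up toward the boundary (the relative entropy being undefined there), which is exactly why the $D_1$ statement is restricted to interior extrema, whereas for $d<1$ the divergence $\hat D_d$ stays defined on all of the simplex. Either way, comparing $D_d$ at neighboring states reduces to comparing $|\Delta_i|$ with $|\Delta_{i\pm1}|$.

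Second, I would use detailed balance (Equation~\ref{s_j} for $n=2$, i.e.\ $s_iT_{(i,N-i)}^{(i+1,N-i-1)} = s_{i+1}T_{(i+1,N-i-1)}^{(i,N-i)}$ and the analogous identity one step down) to convert ``$(i,N-i)$ is a local maximum of $s$'' into the pair of inequalities $T_{(i,N-i)}^{(i+1,N-i-1)} \le T_{(i+1,N-i-1)}^{(i,N-i)}$ and $T_{(i-1,N-i+1)}^{(i,N-i)} \ge T_{(i,N-i)}^{(i-1,N-i+1)}$ (reversed for a local minimum). The Lemma bounds the first right-hand side within $\epsilon$ of $T_{(i,N-i)}^{(i-1,N-i+1)}$ for $N$ large; rerunning its proof with the continuous function $g^{+}(\bar a) = p_1(\bar a)\bar a_2$ in place of $p^{*}$ bounds the second right-hand side within $\epsilon$ of $T_{(i,N-i)}^{(i+1,N-i-1)}$. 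Chaining these two continuity estimates against the two extremum inequalities collapses both to $|\Delta_i| < \epsilon$: a local extremum of $s$ forces the drift to be small, i.e.\ $\bar a$ to lie within a vanishing neighborhood of a candidate ISS, whence $D_d(a)$ is small. Because every estimate here is a uniform-continuity estimate on the compact simplex, a single threshold $N'$ works for all $i$ at once, and the assertion for all $0 \le d < 1$ simultaneously then follows from the ordering $\hat D_{d_1} \le \hat D_{d_2}$ together with the common lower bound by $D_{\chi^2}$ recorded after Equation~\ref{distance_function}.

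The genuinely delicate step --- and the one I expect to be the main obstacle --- is upgrading ``$D_d$ is small at an extremum of $s$'' to ``$D_d$ attains a local minimum there.'' I would write $\Delta_i = h(i/N)$ for the fixed continuous function $h(x) = p_1\big((x,1-x)\big) - x$ on $[0,1]$, whose zeros are precisely the candidate ISS. The extremum inequalities from the previous paragraph, combined with uniform continuity of $u(x) = p_1((x,1-x))(1-x)$ and $v(x) = (1-p_1((x,1-x)))x$ (so that $T_{(i,N-i)}^{(i+1,N-i-1)} = u(i/N)$ and $T_{(i,N-i)}^{(i-1,N-i+1)} = v(i/N)$), show that a local maximum of $s$ at $i$ forces $h$ to be nonnegative up to an $o(1)$ error at $i/N$ and nonpositive up to the same error at $(i+1)/N$ --- that is, $i/N$ sits beside a down-crossing of $h$, which is exactly where $|h|$, and hence $D_d \approx c_d(\bar a)h^2/N^2$, is locally minimal along the grid --- with the symmetric picture at a local minimum of $s$. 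The hard part is the bookkeeping of those $o(1)$ corrections: the grid index maximizing $s$ and the grid index minimizing $|h|$ both lie within $O(1)$ of the same zero of $h$, and one must show, using that $h((i\pm1)/N) - h(i/N)$ and $c_d(\bar a_{i\pm1})/c_d(\bar a_i) - 1$ vanish as $N\to\infty$ under moduli of continuity controlled by $p_1$ and $c_d$, that for $N$ large these indices in fact coincide, giving $D_d(i) \le D_d(i\pm1)$; boundary extrema (the fixation states) then need a separate but easier argument that only uses that $\hat D_d$ remains defined on the boundary for $d<1$.
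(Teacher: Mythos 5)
Your proposal follows essentially the same route as the paper's proof: detailed balance (Equation \ref{s_j}) converts a stationary extremum into (near-)equality of $T_{(i,N-i)}^{(i+1,N-i-1)}$ and $T_{(i+1,N-i-1)}^{(i,N-i)}$, the uniform-continuity Lemma transfers this to the drift $\Delta_i = T_{(i,N-i)}^{(i+1,N-i-1)} - T_{(i,N-i)}^{(i-1,N-i+1)}$, and positive-definiteness of $\hat{D}_d$ (via the $\chi^2$ lower bound) finishes. Your inequality-based version of the first step avoids the paper's appeal to the intermediate value theorem at a possibly non-integral state, and your third paragraph treats the local-minimality claim more carefully than the paper, which disposes of it with the one-line remark that at neighboring states the difference in transitions is larger.
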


\begin{proof}
If $(j, N-j)$ is a maximum or minimum of the stationary distribution then by the formula for the stationary distribution (Equation \ref{s_j}) we must have that the last term in the product passes through 1 so that the stationary distribution switches from increasing to decreasing (or vice versa). By continuity there is $i \in (j-1, j+1)$ where the ratio is exactly equal to one by the intermediate value theorem, and so the state $(i, N-i)$ satisfies $T_{(i, N-i)}^{(i+1, N-i-1)} = T_{(i+1, N-i-1)}^{(i, N-i)}$.

Let $\epsilon > 0$. Then by the lemma and the triangle inequality, for sufficiently large $N > N'$ (scaling $(i, N-i)$ if necessary),
\[ \left|T_{(i, N-i)}^{(i+1, N-i-1)} - T_{(i, N-i)}^{(i-1, N-i+1)} \right| \leq \left| T_{(i, N-i)}^{(i+1, N-i-1)} - T_{(i+1, N-i-1)}^{(i, N-i)}\right| < \epsilon \]
For the $\chi^2$-distance, we have that, by Proposition 2, (at interior states)
\[D_{\chi^2}(a) < N \left| T_{(i, N-i)}^{(i+1, N-i-1)} - T_{(i+1, N-i-1)}^{(i, N-i)}\right|^2 < \frac{2}{N}\epsilon^2 ,\]
and similarly at boundary states for $D_d$, $0 < d < 1$. This shows that $D_d$ vanishes for large $N$; at neighboring states, the difference in transitions is larger, and $D_d(a)$ is locally minimal.
\end{proof}

A maximum of the stationary distribution need not occur when $T_{i \to i}$ is maximal, hence it is not a good definition of stability. The converse of the theorem is not true; the Lyapunov quantity may be minimized but fail to have a local maximum or minimum of the stationary distribution. While we have only proven that the theorem for large $N$ for arbitrary continuous incentives, it may be possible to make sharper estimates based on the form of the incentive. As we will see from the examples for dimensions greater than 2, for linear landscapes the theorem seems to hold for almost any $N$ with few exceptions.

For the replicator equation, it is known that there is at most one asymptotically stable interior state for linear fitness landscapes. We have a similar result in this context as a corollary.

\begin{corollary}
For two-type populations, linear fitness landscapes, the Moran process, if $\mu \to 0$ as $N \to \infty$ then there is at most one interior stationary stable state as $N \to \infty$.
\end{corollary}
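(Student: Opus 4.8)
\section*{Proof proposal for the Corollary}

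The plan is to use the fact that for two types the (Moran/replicator-incentive) process satisfies detailed balance, so the shape of the stationary distribution is entirely governed by the one-step ratios. Abbreviate $T_i^{+}=T_{(i,N-i)}^{(i+1,N-i-1)}$ and $T_i^{-}=T_{(i,N-i)}^{(i-1,N-i+1)}$. By Equation~\ref{s_j} we have $s_{i+1}/s_i = T_i^{+}/T_{i+1}^{-}$, so a state $i$ with $0<i<N$ is a local maximum of the stationary distribution exactly when $T_{i-1}^{+}\ge T_i^{-}$ and $T_{i}^{+}\le T_{i+1}^{-}$; equivalently the sequence $G_N(i):=T_i^{+}-T_{i+1}^{-}$ passes from $\ge 0$ to $\le 0$ between indices $i-1$ and $i$. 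Hence the number of interior stationary stable states is at most the number of such down-crossings of $G_N$, and it suffices to bound the number of sign changes of $i\mapsto G_N(i)$.

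Next I would write $G_N$ in closed form. With $x=i/N$ and the incentive-proportionate probabilities $p=M\bar{\incentive}$ for the balanced mutation matrix $M_{ii}=1-\mu$, $M_{ij}=\mu$, one has $T_i^{+}=p_1(x)(1-x)$ and $T_{i+1}^{-}=p_2(x+\tfrac1N)(x+\tfrac1N)$, so that $G_N(i)=h(i/N)+o(1)$ uniformly in $i$, where $h(x):=p_1(x)(1-x)-p_2(x)x$ is exactly the ISS-candidate defect (i.e. $h=0$ is $\bar a=p(\bar a)$) and the error, coming from the $1/N$ shift in the denominator of $p$, is controlled by the same uniform-continuity estimate as in the Lemma. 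For a linear landscape $f(\bar a)=A\bar a$ a short computation gives $h(x)=P_\mu(x)/D(x)$ with $D(x)=xf_1(x)+(1-x)f_2(x)$ continuous and strictly positive on $[0,1]$ (by the standing positivity assumption on incentives; $D\equiv1$ for the $q$-Fermi), and
\[ P_\mu(x)=(1-\mu)\,x(1-x)\bigl(f_1(x)-f_2(x)\bigr)-\mu\bigl(x^{2}f_1(x)-(1-x)^{2}f_2(x)\bigr), \]
a polynomial of degree at most three. Since $D>0$, the sign changes of $h$ are exactly those of the cubic $P_\mu$.

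The heart of the argument is the limit $\mu\to 0$ together with $N\to\infty$. Fix $\delta>0$. On $[\delta,1-\delta]$ the factor $x(1-x)$ is bounded below, so $P_\mu\to P_0:=x(1-x)(f_1-f_2)$ uniformly there, and therefore $h\to \psi:=x(1-x)(f_1-f_2)/D$ uniformly on $[\delta,1-\delta]$. Because the landscape is linear, $f_1-f_2$ is affine in $x$, so $\psi$ vanishes on $[\delta,1-\delta]$ at most once, and when it does the zero is transversal (nonzero slope); the degenerate subcase $f_1\equiv f_2$ is handled directly since then $P_\mu(x)=-\mu(2x-1)f(x)$ has the single transversal zero $x=\tfrac12$. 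Consequently, for $\mu$ (hence $N$) large enough, $\psi$ — and then, absorbing the $o(1)$ error using that $\psi$ has a definite nonzero slope near its zero and is bounded away from $0$ elsewhere, $G_N$ — has at most one sign change among indices with $\delta\le i/N\le 1-\delta$. So there is at most one stationary stable state bounded away from the boundary, and letting $\delta\to0$ shows the only interior stationary stable state that persists as $N\to\infty$ sits at the unique interior rest point $f_1=f_2$ of the replicator equation; this also reconciles the statement with Theorem~\ref{main_thm_local}, that surviving state being an interior ISS candidate.

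The main obstacle is the behaviour near the boundary. For fixed $\mu>0$ the cubic $P_\mu$ satisfies $P_\mu(0)=\mu f_2(0)>0$ and $P_\mu(1)=-\mu f_1(1)<0$ and may carry two further roots within $O(\mu)$ of $0$ and of $1$, which for a slowly decaying $\mu=\mu(N)$ can genuinely lie in $(0,1)$ and create extra local maxima of $s$ pressed against the fixation states; these must be excluded, which is precisely why the statement is about interior stability in the $N\to\infty$ limit, and they are the finite-population shadow of the familiar concentration of the stationary distribution on $(N,0)$ and $(0,N)$ as $\mu\to0$. The remaining points (the uniform estimate $G_N=h+o(1)$, strict positivity of $D$, and transversality of the interior zero of the affine function $f_1-f_2$) are routine.
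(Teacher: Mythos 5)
Your argument is correct in substance, and it is worth noting that the paper itself supplies no proof of this corollary: it is stated as an immediate consequence of Theorem~\ref{main_thm_local} together with the classical fact that a linear two-type landscape has at most one interior rest point of the replicator equation (i.e.\ at most one interior solution of $f_1=f_2$, since $f_1-f_2$ is affine). Your route makes that implication explicit and self-contained: instead of invoking Theorem~\ref{main_thm_local}, you go back to the detailed-balance formula (Equation~\ref{s_j}), reduce the count of interior stationary maxima to the down-crossings of $G_N(i)=T_i^{+}-T_{i+1}^{-}$, and identify the limit of $G_N$ with the ISS-candidate defect $h=P_\mu/D$, whose numerator is an explicit cubic degenerating to $x(1-x)(f_1-f_2)$ as $\mu\to 0$. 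This buys two things the paper's implicit argument does not: a transparent account of why the hypothesis $\mu\to 0$ is needed (the two extra roots of the cubic $P_\mu$ sitting within $O(\mu)$ of the fixation states, which is exactly the mechanism behind the paper's counterexample $a{=}1{=}d$, $b{=}0{=}c$ with roots $\tfrac12\pm\tfrac12\sqrt{1-4\mu}$), and a correct reading of the statement as being about limit points in the open interval rather than about each finite $N$.

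Two steps deserve tightening. First, the inference ``$G_N\to\psi$ uniformly and $\psi$ has a single transversal zero, hence $G_N$ has at most one sign change on $[\delta,1-\delta]$'' is not valid from sup-norm convergence alone (a small perturbation can oscillate through zero many times near the zero of $\psi$). It is rescued here because $G_N$ is an explicit rational function whose numerator has bounded degree and coefficients converging to those of $x(1-x)(f_1-f_2)D$; coefficient convergence gives $C^1$ convergence on $[\delta,1-\delta]$, so $G_N'\neq 0$ near $x^{*}$ and $|G_N|$ is bounded below elsewhere, which does yield at most one sign change of the sampled sequence. You should say this explicitly rather than appeal to the slope of $\psi$ alone. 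Second, the final ``letting $\delta\to 0$'' should be phrased as a compactness statement: any sequence of interior stationary maxima has all its limit points in $\{0,1\}\cup\{x^{*}\}$, and at most one limit point lies in $(0,1)$; as you note, for slowly decaying $\mu(N)$ there can genuinely be additional maxima at states of order $\mu N$ from the boundary for each finite $N$, so the uniqueness claim only holds in this limiting sense. Neither point changes the conclusion.
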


Note that the hypothesis that $\mu \to 0$ is necessary, otherwise one can find counterexamples with multiple interior stationary maximum for specific parameters, e.g. $\mu = 6/25$, and game matrix given by $a=1 = d$, $b=0=c$ produces a process with interior local maxima at $a/N = 2/5, 3/5$. In fact for this game, any $\mu \in (0, 1/4)$ produces two interior solutions, $a/N = 1/2 \pm \sqrt{1-4m}/{2}$, so long as $N$ is large enough for the solutions to be more than $1/N$ from the boundaries. (Note that Theorem \ref{main_thm_local} still holds.)

\begin{figure}[h]
    \centering
    \includegraphics[width=0.5 \textwidth]{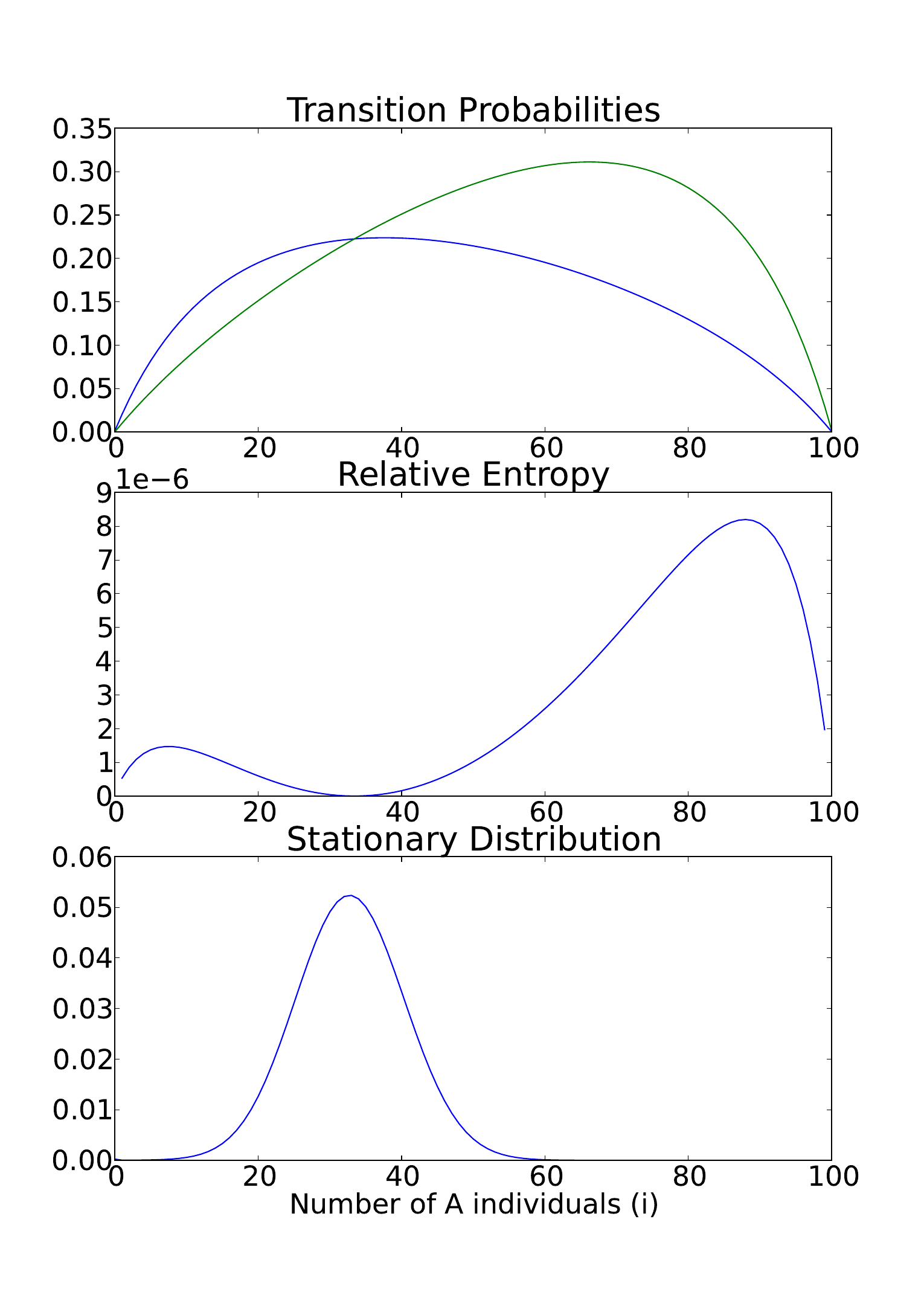}
    \caption{Demonstration of Theorem \ref{main_thm_local}, for the Moran process (replicator incentive) with game matrix given by $a=1=d$, $b=2$, $c=3$, population size $N=100$, $\mu=1/1000$, which has an ESS at approximately $(33, 67)$. Top: Transition probabilities $T_{i}^{i+1}$ (blue) and $T_{i}^{i-1}$ (green). Middle: Relative entropy $D(\bar{a})$. Bottom: Stationary distribution.}
    \label{figure_2}
\end{figure}

\subsection{$k$-fold Incentive Process}

These two propositions show that the main theorem for locally-balanced birth-death processes will apply to the $k$-fold incentive process for all $k$.

\begin{proposition}
The stationary distribution of a Markov process $X$ is the same as the stationary distribution of the process $X^k$, defined by the $k$-th power of the transition matrix.
\end{proposition}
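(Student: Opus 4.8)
The plan is to prove the two inclusions of stationary sets and then note that uniqueness collapses them to equality. One direction is immediate: if $s$ is a stationary distribution of $X$, i.e. $sT = s$, then a one-line induction on $k$ gives $sT^k = s$, so $s$ is stationary for $X^k$. The substance of the proposition is the reverse direction together with uniqueness: ruling out the possibility that $X^k$ acquires new stationary distributions that $X$ does not have.

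For this I would lean on characterization (2) of the stationary distribution recorded in the Preliminaries. Under the standing assumptions of the paper the neighbor-to-neighbor transitions are all positive and (with mutation present) all types communicate, so $T$ is irreducible; moreover every state carries a positive holding probability $T_a^a > 0$, so $T$ is aperiodic, hence primitive. Consequently the powers $T^m$ converge as $m \to \infty$ to a rank-one stochastic matrix $\Pi$ each of whose rows is the unique stationary distribution $s$ of $X$. Since $(T^k)^m = T^{km}$ and $km \to \infty$ as $m \to \infty$, the powers of the transition matrix $T^k$ of $X^k$ converge to the very same matrix $\Pi$. This shows simultaneously that $X^k$ has a stationary distribution, that it is unique, and that it equals $s$. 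Hence $X$ and $X^k$ share the same stationary distribution. (Equivalently, one can argue algebraically: primitivity of $T$ gives $T^m > 0$ entrywise for all sufficiently large $m$, so picking $m$ with $km$ large enough yields $(T^k)^m = T^{km} > 0$; thus $T^k$ is primitive, has a unique stationary distribution, and by the easy direction that distribution must be $s$.)

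The only point that requires care — and the natural place a proof could slip — is that for a merely irreducible but periodic chain the matrix $T^k$ can decompose and admit a whole simplex of stationary distributions, so one genuinely needs aperiodicity rather than just irreducibility. Here that aperiodicity comes for free from the positive self-transition probabilities $T_a^a > 0$ of the incentive process (the death step can replace an individual by one of its own type, and mutation further guarantees it), so no extra hypothesis is needed; this is exactly why the proposition as stated is clean for the $k$-fold incentive process.
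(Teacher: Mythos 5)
Your proof is correct and rests on the same key identity the paper uses, namely that $\lim_{m\to\infty} T^m = \lim_{m\to\infty} (T^k)^m$, so the rows of the limiting matrix give the common stationary distribution. You simply spell out what the paper leaves implicit — irreducibility and aperiodicity (from $T_a^a>0$) to justify that the limit exists and is rank one — which is a worthwhile but not essentially different elaboration.
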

\begin{proof}
The stationary distribution of a Markov chain can be obtained by the rows of the matrix defined by
\[ s = \lim_{m \to \infty}{T^m} = \lim_{m \to \infty}{\left(T^k\right)^m}, \]
and so the stationary distributions of the $k$-fold incentive process are the same for all $k$, given a fixed transition matrix $T$.
\end{proof}

The next proposition follows easily from the fact that the $k$-fold incentive process has the same stationary distribution and the definition of matrix multiplication.
\begin{proposition}
If a Markov process $X$ satisfies the detailed-balance condition, so does the process $X^k$ defined by the $k$-th power of the transition matrix.
\end{proposition}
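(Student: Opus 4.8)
The plan is to reformulate the detailed-balance condition as a symmetry statement about a matrix product and then propagate it through powers by a one-line induction. Write $s$ for the stationary distribution of $X$ (which, by the preceding proposition, is also the stationary distribution of $X^k$), and let $D = \operatorname{diag}(s_1, s_2, \ldots)$ be the diagonal matrix carrying the stationary weights. The detailed-balance condition $s_a T_a^b = s_b T_b^a$ for all states $a,b$ is exactly the assertion that the matrix $DT$ is symmetric, i.e. $DT = (DT)^{\mathsf T} = T^{\mathsf T} D$.

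First I would record this rewritten hypothesis $DT = T^{\mathsf T} D$. Then I would prove by induction on $k$ that $D T^k = (T^k)^{\mathsf T} D$: the base case $k=1$ is the hypothesis, and for the inductive step
\[
D T^{k+1} = (D T^k)\, T = (T^k)^{\mathsf T} D T = (T^k)^{\mathsf T} T^{\mathsf T} D = (T^{k+1})^{\mathsf T} D,
\]
where the third equality again uses $DT = T^{\mathsf T} D$ to commute $D$ past the final factor of $T$. Reading off the $(a,b)$ entry of $D T^k = (T^k)^{\mathsf T} D$ gives $s_a \, (T^k)_a^b = s_b \, (T^k)_b^a$, which is precisely the detailed-balance condition for the process $X^k$ with respect to the same distribution $s$.

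A more conceptual phrasing of the same argument: detailed balance is equivalent to the transition operator $T$ being self-adjoint on $\R^{\,(\text{states})}$ equipped with the weighted inner product $\langle u, v\rangle_s = \sum_a s_a u_a v_a$, and any power of a self-adjoint operator is again self-adjoint, so $T^k$ is self-adjoint for this same inner product. There is essentially no obstacle here; the only point needing a moment's care is keeping the transpose/commutation bookkeeping straight in the inductive step, together with the observation (supplied by the previous proposition, or directly by the fact that a reversible measure is automatically invariant) that $s$ really is stationary for $T^k$, so that the identity above is the detailed-balance condition for the process $X^k$ itself and not merely a formal matrix symmetry.
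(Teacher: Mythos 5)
Your proof is correct and is essentially the argument the paper has in mind: it only remarks that the claim ``follows easily from the fact that the $k$-fold incentive process has the same stationary distribution and the definition of matrix multiplication,'' and your reformulation of detailed balance as symmetry of $DT$ plus the one-line induction is exactly that argument written out carefully. No gaps; the self-adjointness remark is a nice conceptual summary of the same computation.
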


\begin{corollary}
Theorem \ref{main_thm_local} holds for the $k$-fold incentive process for two-type populations.
\end{corollary}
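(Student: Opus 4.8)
The plan is to combine the two preceding propositions — that the $k$-fold incentive process has the same stationary distribution $s$ as the incentive process and is again reversible, so that equation \ref{s_j} still describes $s$ — with the proposition asserting $E^{k}(\bar a) = \bar a$ if and only if $E(\bar a) = \bar a$, and then to rerun the proof of Theorem \ref{main_thm_local}. For a two-type population the states form the line $\{0,1,\dots,N\}$, and by the first of those propositions a state $(i,N-i)$ is a local maximum or minimum of the stationary distribution of the $k$-fold process exactly when it is one for the incentive process; the distance to be used for the $k$-fold process is $D_{d}^{(k)}(a) := \hat D_{d}(E^{k}(\bar a),\bar a)$, where $E^{k}$ is the $k$-th iterate of $E$ fixed by the earlier discussion. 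One cannot simply substitute $T^{k}$ for $T$ in the proof of Theorem \ref{main_thm_local}: the genuine one-step expected displacement of the $k$-fold chain, $\tfrac1N\sum_{m=-k}^{k} m\,(T^{k})_{(i,N-i)}^{(i+m,N-i-m)}$, involves transitions to all states within distance $k$ and, since $E$ is nonlinear, is not the iterate $E^{k}$ — so $E^{k}$ must be handled on its own terms.

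First I would apply the proof of Theorem \ref{main_thm_local} to the incentive process itself: at a local extremum $(i,N-i)$ of $s$ the intermediate value theorem applied to the consecutive ratio in \ref{s_j} produces a nearby state with equal nearest-neighbour transitions, and the continuity Lemma then forces, for $N$ large, $\bigl|T_{(i,N-i)}^{(i+1,N-i-1)} - T_{(i,N-i)}^{(i-1,N-i+1)}\bigr| < \epsilon$, which by Proposition \ref{expected_incentive} is exactly $\|E(\bar a) - \bar a\| < \sqrt{2}\,\epsilon/N$. Next I would propagate this to the iterate through the telescoping identity $E^{k} - \mathrm{id} = \sum_{j=0}^{k-1}(E - \mathrm{id})\circ E^{j}$ together with the fact that $E - \mathrm{id} = \tfrac1N(p - \mathrm{id})$ moves every point of the simplex by at most $\sqrt2/N$: hence $E^{j}(\bar a)$ stays within $O(k/N)$ of $\bar a$ for $j<k$, and uniform continuity of $p$ gives $\bigl\|(E-\mathrm{id})(E^{j}(\bar a))\bigr\| \le \|(E-\mathrm{id})(\bar a)\| + \tfrac1N\omega_{p}(O(k/N)) + O(k/N^{2})$ with $\omega_{p}$ a modulus of continuity for $p$. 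Summing the $k$ terms yields $\|E^{k}(\bar a) - \bar a\| \le k\,\|E(\bar a) - \bar a\| + o(1/N)$ for fixed $k$, hence $\|E^{k}(\bar a) - \bar a\| = O(\epsilon/N)$ at the extremum — the quantitative form of the cited $E^{k}$-fixed-point proposition. Feeding this into the same estimates used to close the proof of Theorem \ref{main_thm_local} (the $\chi^{2}$-bound at interior states, and its companion for $D_{d}^{(k)}$ with $0<d<1$ at boundary states) gives $D_{d}^{(k)}(a)\to 0$ as $N\to\infty$, with $d=1$ only at interior states; local minimality follows just as there, since at the neighbouring states $s$ is strictly monotone, so the incentive-process drift $T^{+}-T^{-}$ is bounded away from zero with constant sign across the monotone stretch and, by the telescoping identity, so is the drift of $E^{k}$, whence $D_{d}^{(k)}$ is strictly larger at those neighbours.

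The main obstacle is the middle step — showing that iterating $E$ a fixed number of times does not spoil the smallness coming from Theorem \ref{main_thm_local}. The resolution is quantitative: each application of $E$ is a $O(1/N)$ perturbation of the identity and $p$ is uniformly continuous, so over $k$ steps the base point moves only $O(k/N)$, the $k$ one-step displacements therefore agree with $(E-\mathrm{id})(\bar a)$ up to a cumulative $o(1/N)$ error, and the sole genuine cost is the factor $k$, which is fixed and hence harmless against the leading $O(\epsilon/N)$ term — precisely the role the factor $N$ played in the original argument. Everything else is a transcription of the two-type proof of Theorem \ref{main_thm_local} enabled by the reversibility and stationary-distribution invariance recorded in the two preceding propositions.
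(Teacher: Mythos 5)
Your proposal is correct and follows essentially the same route as the paper, which offers no explicit proof of this corollary but derives it immediately from the invariance of the stationary distribution under powers of the transition matrix, the preservation of detailed balance, and the earlier proposition that $E^k(\bar{a}) = \bar{a}$ if and only if $E(\bar{a}) = \bar{a}$. Your quantitative telescoping argument bounding $\|E^{k}(\bar a) - \bar a\|$ by $k\,\|E(\bar a) - \bar a\| + o(1/N)$ for fixed $k$ fills in a step the paper leaves implicit, and your observation that $E^k$ is not the expected displacement of the $k$-step chain is a genuine subtlety worth recording.
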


\subsubsection*{Examples}
As an illustration of how large $N$ may need to be, we consider the $q$-replicator incentive and game matrix defined by $a=1=d$, $b=2=c$. Direct substitution shows that $\bar{a}_1 = \frac{1}{2}$ is a solution to the ISS candidate equation for all $q$ and $\mu$.  (Note that for $q=1$, $a=0$ and $a=N$ are also solutions.) For the two stationary extrema equations, we have that $\bar{a}_1 = \frac{1}{2} \pm \frac{1}{2N}$. For large $N$, these solutions all converge to $\bar{a} = (\frac{1}{2}, \frac{1}{2})$. Even for small $N$, these solutions differ only by a small amount ($\approx \frac{1}{2N}$). Similarly, for the landscape given by $a=0=d$, $b=1=c$, we can give an explicit formula for the stationary distribution for arbitrary $\mu$ (see \cite{harper2013inherent}):
\begin{align}
 s_{(0,N)} = s_{(N,0)} &= \frac{1}{2 + 2 \mu (2^N - 2)} \notag\\
 s_{(j, N-j)} &= \frac{2 \mu}{2 + 2 \mu (2^N - 2)} \binom{N}{j}
 \label{stationary_example}
\end{align}

Since only the binomial factor depends on $j$, it is clear that $(N/2, N/2)$ is the local maximum of the stationary distribution for any $\mu$ and any $N \geq 2$. So we see explicitly that the property of being stationary stable is dependent on the population size $N$.

Let the fitness landscape be defined by the $q$-replicator incentive and the game matrix $a=2=b$, $c=1=d$ (the Moran landscape with relative fitness $r=2$) and let $\mu = 0.001$ for $N=100$. As shown in \cite{harper2013incentive}, for $q \neq 1$ there is an internal candidate ISS. We consider three cases. For $q=1$, there is no ISS (the transition probabilities never intersect), and no internal local maximum of the stationary distribution. The latter is true for $q=2$ as well, but there is a state where the transitions intersect, and $D_d(x)$ is locally minimal at this state ($x \approx (33, 67)$), which shows that a candidate ISS need not be stable in the sense of the stationary distribution. An imbalanced mutation rate can force an internal equilibrium for $q=2$, for instance with $\mu_{12} = 0.05$ and $\mu_{21} = 0.0005$. For $q=0$, the projection incentive, there is a local maximum of the stationary distribution (at $i=67$). It is unique and $D_d(x)$ is minimal.

It is possible to have multiple interior stationary stable states for the replicator incentive with $q \neq 1$. For example, the following parameters give a process with two local maxima of the stationary distribution: $N=50$, $\mu = 0.1$, $q=1.5$, and the neutral fitness defined by $a=b=c=d=1$. It is also possible to have an internal local stationary maximum as well as a boundary maximum, e.g. for $N=50$, $\mu_{12} = 0.1$, $\mu_{21} = 0.01$, $q=0.5$, fitness landscape defined by $a=20$, $b=1$, $c=7$, $d=10$. % 2966 and 5486

\section{Main theorem for Globally Balanced Incentive Processes}

In general, incentive processes for more than two-types need not be locally-balanced, so we cannot use Equation \ref{s_j} to establish the main theorem. In fact, it appears to rarely be the case (the neutral landscape is an exception in some cases), though computations indicate that in some cases, the detailed balance condition is nearly satisfied, and Equation \ref{s_j} produces a good approximation of the stationary distribution. Regardless, we now generalize Theorem \ref{main_thm_local} to globally-balanced incentive processes. The proof strategy is similar to the locally-balanced case, using the global balance equation instead. 

Any probability inflow-outflow balanced state, i.e. any state such that $\sum_{a \text{ adj } b}{T_{a}^{b}} = \sum_{b \text{ adj } a} {T_{b}^{a}}$, will minimize the distance between the expected next state and the current state. We will call such states \emph{probability flow neutral}. This could happen in principle at e.g. a saddle point, but since we are mostly concerned with stationary maxima for stability purposes, we will focus on maxima.

\begin{proposition}
For a continuous incentive and mutation matrix and sufficiently large $N$, local maxima and minima of the stationary distribution are probability flow neutral. Precisely, for $\epsilon > 0$, there is an $N'$ such that for $N > N'$,
\[ \left| \sum_{a \text{ adj } b}{T_{a}^{b}} - \sum_{b \text{ adj } a}{T_{b}^{a}} \right| < \epsilon \]
\end{proposition}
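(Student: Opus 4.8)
The plan is to mimic the structure of the proof of Theorem \ref{main_thm_local}, but to replace the two-term detailed-balance identity with the many-term global balance equation \eqref{global_balance_equation}. First I would observe that if $a$ is a local maximum or minimum of the stationary distribution $s$, then $s_a$ dominates (respectively is dominated by) all of its neighbors $s_b$ with $b$ adjacent to $a$. Feeding this into the global balance equation $s_a \sum_{b \text{ adj } a} T_a^b = \sum_{b \text{ adj } a} s_b T_b^a$, I would want to conclude that $\sum_{b} T_a^b$ is close to $\sum_{b} T_b^a$: at a maximum, every $s_b \le s_a$, so the right-hand side is at most $s_a \sum_b T_b^a$, giving $\sum_b T_a^b \le \sum_b T_b^a$ (after dividing by $s_a > 0$); the reverse inequality would have to be extracted from continuity, since it is not forced by the balance equation alone. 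This is the point where the argument genuinely needs ``$N$ large,'' exactly as in the two-type case.

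Next I would make the continuity input precise. For each ordered pair $(\alpha,\beta)$ with $\alpha \ne \beta$, the transition probability $T_a^{a+i_{\alpha\beta}} = p_\alpha(\bar a)\bar a_\beta$ is, up to the $O(1/N)$ shift in the argument, the same continuous function of the population distribution as its ``reverse'' companion $T_{a+i_{\alpha\beta}}^{a} = p_\beta(\overline{a+i_{\alpha\beta}})\,\overline{(a+i_{\alpha\beta})}_\alpha$ evaluated at a nearby point; more carefully, the outflow $\sum_{b \text{ adj } a} T_a^b$ and the inflow $\sum_{b \text{ adj } a} T_b^a$ are each obtained by evaluating a fixed continuous function $G(\bar a)$ (built from the components of $p$ and the identity coordinates) at population distributions that differ by $O(1/N)$. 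Since the simplex is compact, $G$ is uniformly continuous, so for any $\epsilon > 0$ there is $\delta > 0$ with $|G(\bar a) - G(\bar b)| < \epsilon/2$ whenever $|\bar a - \bar b| < \delta$; choosing $N' $ so that $N' > C/\delta$ for the relevant constant $C$ (coming from the number of neighboring states and the $1/N$ step size) gives $\bigl|\sum_{a \text{ adj } b} T_a^b - \sum_{b \text{ adj } a} T_b^a\bigr| < \epsilon$ for all such extremal states once $N > N'$. Combining the one-sided inequality from the balance equation with this continuity estimate yields the two-sided bound claimed in the proposition.

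The main obstacle I anticipate is the bookkeeping in identifying the inflow and outflow sums as evaluations of a \emph{single} continuous function at nearby arguments: the inflow term $\sum_{b \text{ adj } a} T_b^a$ involves transition probabilities \emph{out of} the neighbors $b$ of $a$, and those are functions of $\bar b = \bar a + i_{\alpha\beta}/N$ rather than of $\bar a$, so one must check that after summing over all neighbors the discrepancy is still controlled by a single modulus of continuity and does not accumulate with the number of types $n$. I would handle this by writing inflow minus outflow as a sum of $n(n-1)$ terms, each of the form $\bigl[p_\alpha(\bar a + i_{\alpha\beta}/N) - p_\alpha(\bar a)\bigr]$ times a bounded factor plus lower-order $1/N$ corrections, and bounding each term separately by the uniform modulus of continuity of $p$; since $n$ is fixed and finite, the total is still $o(1)$ as $N \to \infty$. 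A secondary subtlety is that, as in Theorem \ref{main_thm_local}, the extremum of $s$ may sit ``between'' lattice states, so one may need to pass to a nearby state and invoke an intermediate-value-type argument along a path of adjacent states; this is already flagged in the discussion preceding the proposition and is handled exactly as there.
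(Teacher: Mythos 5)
Your proposal is correct, but the key estimate is obtained by a different route than the paper's. Both arguments start from the global balance equation at the extremal state $a$. The paper divides that equation by $s_a$ and writes the flow imbalance as $\sum_{b \text{ adj } a}(s_b/s_a - 1)T_b^a$, then invokes continuity (really, equicontinuity in $N$) of the \emph{stationary distribution} to make $|s_b/s_a - 1|$ small for large $N$, giving the two-sided bound in one stroke. You instead extract only a one-sided inequality from the extremum property ($s_b \le s_a$ at a maximum forces outflow $\le$ inflow) and obtain the complementary bound from uniform continuity of the \emph{incentive}: the inflow $\sum_{\alpha\ne\beta} p_\beta(\bar a + i_{\alpha\beta}/N)(\bar a_\alpha + 1/N)$ and the outflow $\sum_{\alpha\ne\beta} p_\alpha(\bar a)\bar a_\beta = \sum_{\alpha\ne\beta} p_\beta(\bar a)\bar a_\alpha$ differ termwise by $\omega_p(2/N) + O(1/N)$ over a fixed number $n(n-1)$ of terms. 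Your route has the advantage of resting only on hypotheses the proposition actually states (continuity of the incentive and mutation matrix), whereas the paper's bound $|s_b/s_a - 1| < \epsilon/(n^2 M)$ quietly assumes a uniform modulus of continuity for the family of stationary distributions as $N$ varies, which is not justified in the text. The trade-off is that your continuity estimate holds at \emph{every} state, not just extrema, so for the $\epsilon$-formulation actually asserted the extremum hypothesis and the one-sided balance inequality do no work; they only become essential if one wants the sharper qualitative conclusion (exact flow neutrality, or a bound that distinguishes extrema from generic states), which is the spirit in which the paper uses the result downstream. Your closing remarks about not accumulating error over the $n(n-1)$ neighbors and about the extremum possibly sitting between lattice states are the right cautions and are handled as you describe.
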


\begin{proof}
We prove just the maximal case. Suppose we have local maxima $a$ of the stationary distribution and let $\epsilon > 0$. Since all the transition probabilities are continuous in the population state, so is the stationary distribution, which is therefore uniformly continuous and bounded by $M$.
For sufficiently large $N$ we have that $|s_b/s_a - 1| < \epsilon/ (n^2 M)$ for states $b$ adjacent to $a$. By the local balance equation,
\[s_a \sum_{a \text{ adj } b}{T_{a}^{b}} = \sum_{b \text{ adj } a} {s_b T_{b}^{a}},\]
so dividing through by $s_a$ and using the above inequality for $s_b/s_a$,
\[ \left| \sum_{a \text{ adj } b}{T_{a}^{b}} - \sum_{b \text{ adj } a}{T_{b}^{a}} \right| = \left|\sum_{b \text{ adj } a} \left(\frac{s_b}{s_a}-1\right)T_b^{a} \right| < \epsilon / (n^2 M) \left|\sum_{b \text{ adj } a}{T_b^{a}} \right| < \epsilon.\]

% $\sum_{a \text{ adj } b}{T_{a}^{b}} \leq \sum_{b \text{ adj } a} {T_{b}^{a}}$. We also have that, for any $b$ adjacent to $a$, 
% \[\frac{s_a}{s_b} \sum_{a \text{ adj } b}{T_{a}^{b}} \geq \sum_{b \text{ adj } a} {T_{b}^{a}}.\]
%  Hence by continuity, for sufficiently large $N$, $s_a/s_b \to 1$, and $\geq$ holds. More precisely, for $\epsilon > 0$, 
\end{proof}

We take it as given that stationary extrema occur at probability flow neutral states, knowing that the population states may differ by a small amount. Now we show that stationary extrema minimize the distance functions $D_d$. While the relative entropy $(d=1)$ is only well-defined on the interior of the simplex, the same result holds when the state space is restricted to a boundary simplex of lower dimension. This means that the relative entropy can still detect boundary stable points of the same process on the restricted subspace, but the other $D_d$ do so on the full simplex.

\begin{theorem}
For a continuous incentive and mutation matrix on $n$ types, there is a sufficiently large $N$ so that probability flow neutral states of the incentive process are local minima of the distance functions $D_d$ for $0 \leq d < 1$; interior extrema are local minima of the relative entropy $D_1$.
\label{main_thm_global}
\end{theorem}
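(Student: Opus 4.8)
The plan is to mirror the proof of Theorem \ref{main_thm_local}, replacing the role of the local (detailed) balance ratio with the probability-flow-neutrality estimate just established in the preceding proposition. Recall that by Proposition \ref{prop_expected_next} and Proposition \ref{expected_incentive}, the deviation of the expected next state from the current state is controlled entirely by the net probability outflow at $a$: from form (2) of Proposition \ref{expected_incentive}, $E(\bar a) - \bar a = \frac{1}{N}\sum_{\alpha,\beta,\alpha\neq\beta} i_{\alpha\beta} T_a^{a+i_{\alpha\beta}}$, and each component of $E(\bar a) - \bar a$ is thus a signed sum of transition probabilities into and out of $a$, whose total magnitude is bounded by the probability-flow imbalance $\left|\sum_{a\text{ adj }b}T_a^b - \sum_{b\text{ adj }a}T_b^a\right|$. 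So the first step is to make that bookkeeping precise: show that if $a$ is probability flow neutral (up to $\epsilon$, as in the proposition), then $\|E(\bar a) - \bar a\|_\infty \leq C\epsilon/N$ for a constant $C$ depending only on $n$.

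Second, I would feed this into the $\chi^2$-distance bound exactly as in the two-type case. Since $D_{\chi^2}(E(\bar a),\bar a) = \sum_i (E(\bar a)_i - \bar a_i)^2/\bar a_i$, and since at an interior probability-flow-neutral state each $\bar a_i$ is bounded below (it sits in a compact sub-simplex where the incentive is continuous and positive), we get $D_{\chi^2}(a) \leq C' N (\epsilon/N)^2 = C'\epsilon^2/N$, which tends to $0$ as $N\to\infty$ (choosing $\epsilon$ fixed and then $N$ large, or more carefully sending $\epsilon\to 0$ with $N$). Because every $\hat D_d$ with $0 \le d < 1$ is bounded above by $D_{\chi^2}$ on our state spaces — this is the fact quoted after Equation \ref{distance_function} — the same vanishing holds for all $D_d$, $0\le d<1$, on the full simplex; for $D_1$ the relative entropy is bounded by $D_{\chi^2}$ as well (the standard Kullback--Leibler versus $\chi^2$ inequality), but only on the interior or on a boundary sub-simplex where all coordinates stay positive, which is why the $d=1$ statement is restricted to interior extrema (or extrema of the restricted process).

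Third, to upgrade ``vanishes'' to ``is a local minimum,'' I would argue as in Theorem \ref{main_thm_local}: at states neighbouring the probability-flow-neutral state the net outflow is bounded away from zero (the stationary distribution is genuinely increasing toward, or decreasing away from, the extremum, so the ratios $s_b/s_a$ at those neighbours are not all near $1$), hence $E(\bar b)-\bar b$ is of order $1/N$ rather than $o(1/N)$, making $D_d(b)$ strictly larger than $D_d(a)$ for $N$ large. Combined with positive-definiteness of $\hat D_d$ (also quoted after Equation \ref{distance_function}), this gives the strict local minimum. One subtlety to handle cleanly is that, exactly as in the two-type case, the actual integer stationary extremum and the actual probability-flow-neutral state need not coincide but differ by $O(1/N)$; I would absorb this by the same ``scaling $a$ if necessary'' device and uniform continuity of $p$.

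The main obstacle is the third step rather than the first two, which are essentially the algebraic bookkeeping of Proposition \ref{expected_incentive} plus the $\chi^2$ domination already in hand. Specifically, turning the preceding proposition's one-sided estimate at the extremum into a genuine two-sided separation — showing the flow imbalance at neighbouring states is bounded \emph{below} — requires knowing the stationary distribution is not nearly flat in a whole neighbourhood, i.e. that the extremum is ``nondegenerate'' in an appropriate discrete sense. For linear landscapes this is generically fine, but in full generality one either restricts to nondegenerate extrema or, as the remark after Theorem \ref{main_thm_local} already concedes, accepts that the converse fails and states only that probability-flow-neutral states (hence stationary extrema) are critical points where $D_d$ attains its minimum value $0$, with local-minimality holding under the mild nondegeneracy hypothesis. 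I would state the theorem in the latter, cleaner form, matching the phrasing of Theorem \ref{main_thm_local}.
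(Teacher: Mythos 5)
Your proposal follows essentially the same route as the paper's own (sketched) proof: express $E(\bar a)-\bar a$ via form (2) of Proposition \ref{expected_incentive}, use uniform continuity to trade outgoing transitions $T_a^{a-i_{\alpha\beta}}$ for incoming ones $T_{a+i_{\alpha\beta}}^{a}$ so that the probability-flow imbalance controls the deviation, and then dominate $D_d$ by the $\chi^2$-distance exactly as in the two-type case. Your third step is in fact more explicit than the paper, which simply asserts local minimality at neighbouring states; your observation that this needs a discrete nondegeneracy of the extremum is a fair and worthwhile refinement rather than a departure.
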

\begin{proof}
Since the proof is similar to the two-type case, we just sketch the proof. Let $\epsilon > 0$ and suppose that $a$ is a probability flow neutral state of the process:
\[ \left| \sum_{a \text{ adj } b}{T_{a}^{b}} - \sum_{b \text{ adj } a} {T_{b}^{a}} \right| < \epsilon . \]
We have that $E(\bar{a}) - \bar{a} = \frac{1}{N}\sum_{\alpha \neq \beta}{ \left( T_{a}^{a + i_{\alpha \beta}} - T_{a}^{a - i_{\alpha \beta} }\right) i_{\alpha \beta} }$, and we use continuity to choose a sufficiently large $N$ so to replace right-hand terms with $T_{a + i_{\alpha \beta}}^{a}$ (as in the lemma before Theorem 1). Then we have that
\[D_d(a) < \left| \sum_{a \text{ adj } b}{T_{a}^{b}} - \sum_{b \text{ adj } a} {T_{b}^{a}}\right|^2 < \frac{n}{N} \epsilon^2.\]
\end{proof}

Since the $k$-fold incentive process has the same stationary distribution as the $k=1$ case, and since the expected value functions agree at equilibrium via Proposition \ref{expected_incentive}, we have the next theorem as an immediate corollary. This shows in particular that the connection between the stationary distribution and evolutionarily stable states can hold for generational processes.

\begin{theorem}
Theorem \ref{main_thm_global} holds for the $k$-fold incentive process.
\end{theorem}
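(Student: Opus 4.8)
The plan is to show that the statement is an immediate corollary of Theorem~\ref{main_thm_global} together with the two structural propositions already established about the $k$-fold incentive process. The key observation is that nothing in the proof of Theorem~\ref{main_thm_global} is special to the $k=1$ dynamics beyond two facts: (i) the stationary distribution is determined by the transition matrix, and (ii) at a probability flow neutral state the expected-distribution map $E$ has a fixed point, so that the distance functions $D_d$ vanish there. Both of these persist for $X^k$.

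First I would invoke the Proposition stating that the stationary distribution of $X^k$ equals that of $X$ (this is the proposition proved via $s = \lim_{m\to\infty} T^m = \lim_{m\to\infty}(T^k)^m$). Hence the locations of local maxima and minima of the stationary distribution are identical for the $k$-fold process and the ordinary incentive process; in particular, for sufficiently large $N$ these extrema are probability flow neutral states of the underlying process $X$ by the Proposition preceding Theorem~\ref{main_thm_global}. Second, I would use Proposition~\ref{expected_incentive} (specifically the equivalence chain in Proposition~\ref{prop_expected_next} together with the Proposition that $E^k(\bar{a}) = \bar{a}$ iff $E(\bar{a}) = \bar{a}$): a probability flow neutral state is, by the computation $E(\bar a) - \bar a = \frac{1}{N}\sum_{\alpha\neq\beta}(T_a^{a+i_{\alpha\beta}} - T_a^{a-i_{\alpha\beta}})$ and the large-$N$ continuity argument, an (approximate) fixed point of $E$, hence of $E^k$, the natural expected-distribution map for the $k$-fold process. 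Then the estimate from the proof of Theorem~\ref{main_thm_global} applies verbatim with $E$ replaced by $E^k$: at such a state $D_d \to 0$ as $N\to\infty$, and at neighboring states the discrepancy in probability flow is bounded away from zero, so $D_d$ (computed via $E^k$) is locally minimal, with the usual caveat that $D_1$ is only well-defined and detects extrema on the interior (or on a boundary subsimplex).

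The only mild subtlety — and the step I would treat most carefully — is confirming that the error estimates compose correctly under $k$ iterations without $k$ entering badly. Since $k$ is fixed while $N\to\infty$, and each application of $E$ moves the state by $O(1/N)$ with the relevant transition-probability differences controlled by uniform continuity on the compact simplex, the $k$-fold iterate $E^k$ still satisfies $E^k(\bar a) - \bar a = O(k/N)$ near a probability flow neutral state, which is $o(1)$; this is exactly the regime in which the bound $D_d(a) < \bigl|\sum_{a \text{ adj } b} T_a^b - \sum_{b \text{ adj } a} T_b^a\bigr|^2 < \frac{n}{N}\epsilon^2$ from the proof of Theorem~\ref{main_thm_global} goes through. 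One then notes, as in the remark following that theorem, that the converse fails, and that the conclusion in fact holds for moderate $N$ in the linear-landscape examples. No genuinely new ideas are required; the proof is a bookkeeping assembly of the cited propositions, which is why the paper states it "as an immediate corollary."
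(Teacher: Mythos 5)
Your proposal is correct and follows essentially the same route as the paper, which disposes of the theorem in one sentence by citing exactly the two facts you identify: the $k$-fold process shares the stationary distribution of the underlying incentive process, and the fixed points of $E^k$ coincide with those of $E$ (Propositions on the $k$-fold process and Proposition \ref{expected_incentive}), so Theorem \ref{main_thm_global} applies directly. Your extra bookkeeping on the $O(k/N)$ error composition is more careful than anything in the paper; the only caveat is that the paper intends the result to cover $k$ as large as $N$ (the generational case), where your ``$k$ fixed as $N\to\infty$'' framing would need the exact fixed-point equivalence $E^k(\bar{a})=\bar{a} \iff E(\bar{a})=\bar{a}$ rather than the $O(k/N)$ estimate, which is precisely the fact the paper leans on.
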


\subsection{Three Type Examples}

Here we give several interesting examples using I.M. Bomze's classification of three-type phase portraits for the replicator dynamic with linear fitness landscape (see \cite{bomze1983lotka} and the additions and corrections \cite{bomze1995lotka}). We use the Fermi incentive for all examples. A full list corresponding to each of the 48 phase portraits is available online at \url{http://people.mbi.ucla.edu/marcharper/stationary_stable/3x3/}. Figure \ref{three_player_plots} gives stationary distributions and expected distances for the following game matrices, to illustrate Theorem \ref{main_thm_global}.

\[ \left(\begin{smallmatrix}
0 & 0 & 1\\ 
0 & 0 & 1\\
1 & 0 & 0\\
\end{smallmatrix} \right) \quad
\left(\begin{smallmatrix}
0 & 0 & 0\\ 
0 & 0 & -1\\
0 & -1 & 0\\
\end{smallmatrix}\right)  \quad
\left(\begin{smallmatrix}
0 & 2 & 0\\ 
2 & 0 & 0\\
1 & 1 & 0\\
\end{smallmatrix}\right)
\]

% 
% Define the one-step neighborhood of a state $a$ is the set 
% \[OSN(a) = \{ x \in \mathbb{R}^n | \sum_i{x_i} = 1, \, x_i > 0 \, \forall i, \, \text{ and } ||x - \bar{a}|| < 2\} .\]
% You can visualize this set at the interior of the polygon with vertices the adjacent states of $a$.

\begin{landscape}

\centering
\begin{figure}[h]
        \begin{subfigure}[b]{0.4\textwidth}
            \centering
            \includegraphics[width=\textwidth]{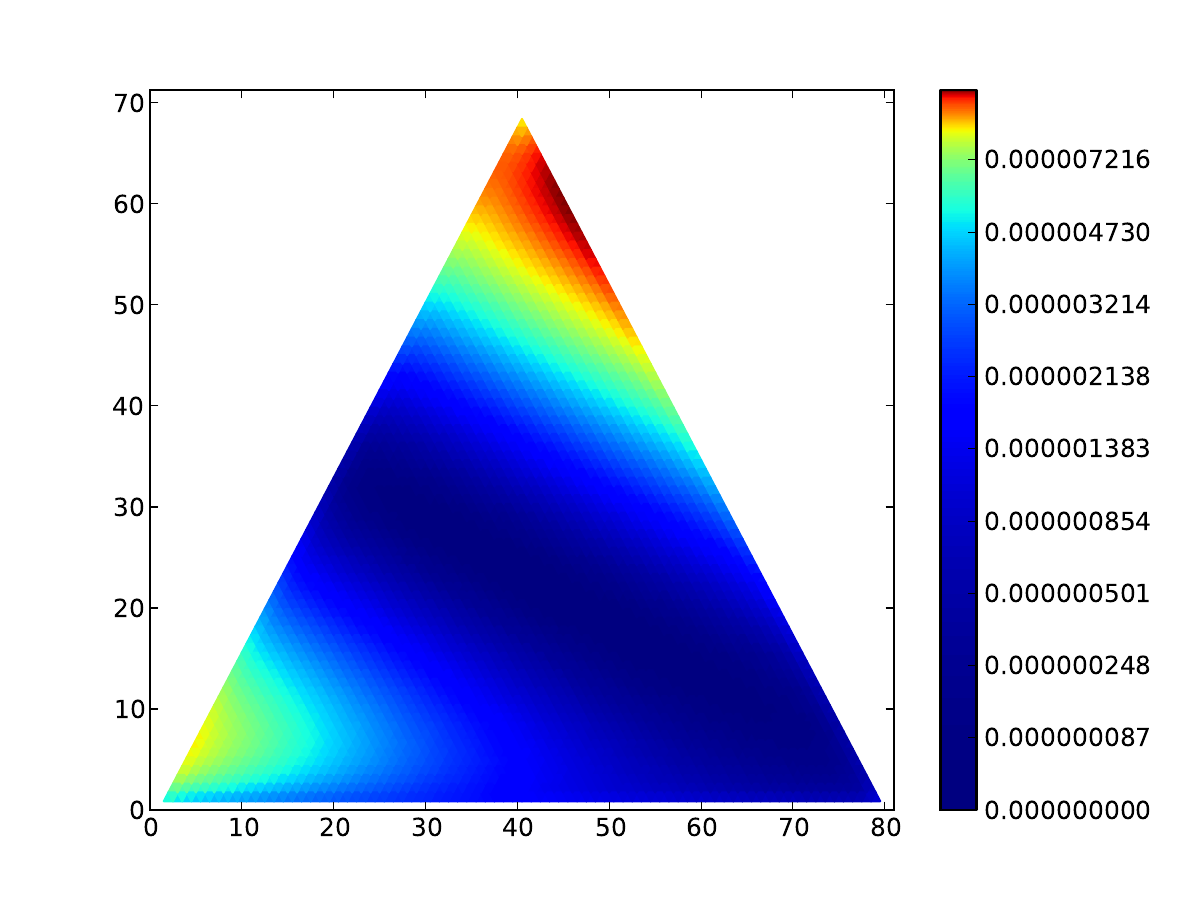}
%             \caption{}
%             \label{}
        \end{subfigure}%
        ~ %add desired spacing between images, e. g. ~, \quad, \qquad etc. 
          %(or a blank line to force the subfigure onto a new line)
        \begin{subfigure}[b]{0.4\textwidth}
            \centering
            \includegraphics[width=\textwidth]{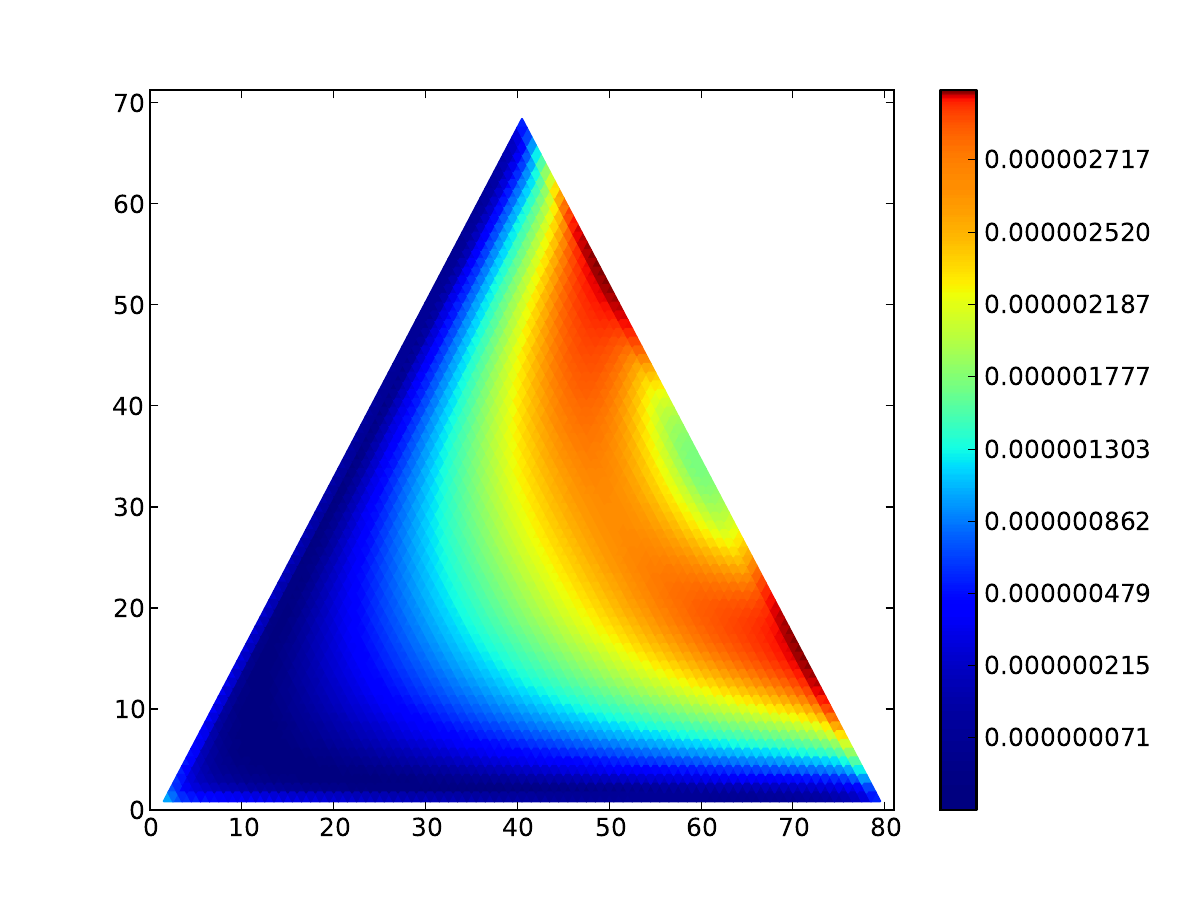}
%             \caption{}
%             \label{}
        \end{subfigure}
        \begin{subfigure}[b]{0.4\textwidth}
            \centering
            \includegraphics[width=\textwidth]{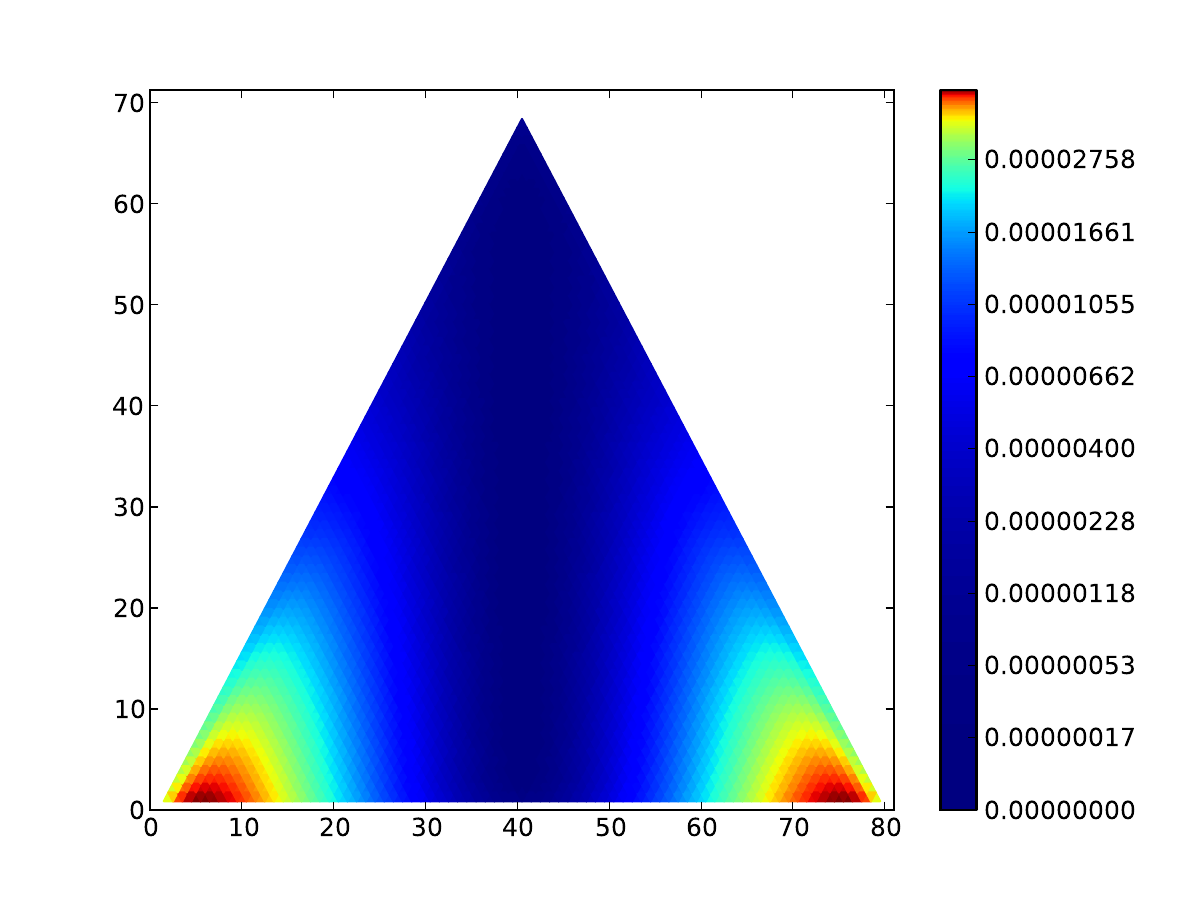}
%             \caption{}
%             \label{}
        \end{subfigure}
        \\
        \begin{subfigure}[b]{0.4\textwidth}
            \centering
            \includegraphics[width=\textwidth]{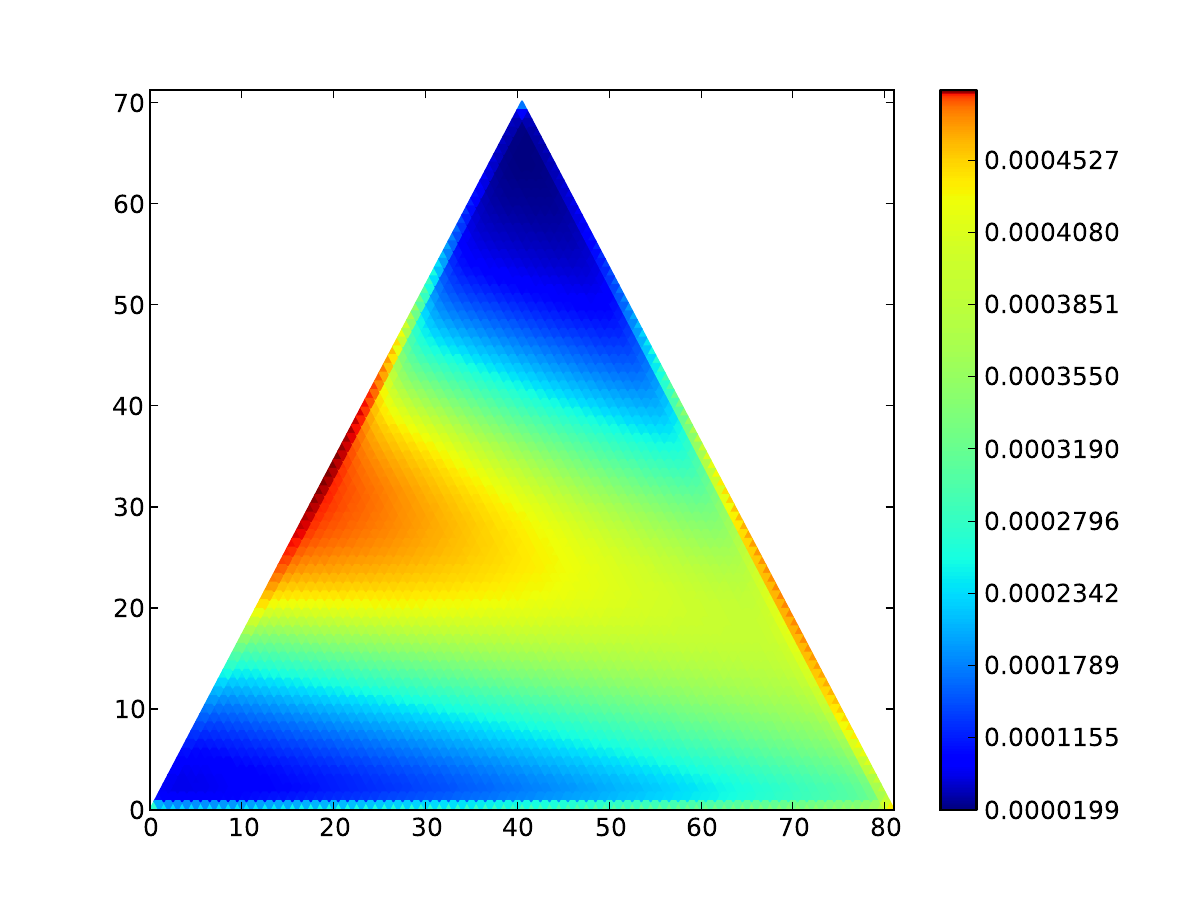}
%             \caption{}
%             \label{}
        \end{subfigure}%
        ~ %add desired spacing between images, e. g. ~, \quad, \qquad etc. 
          %(or a blank line to force the subfigure onto a new line)
        \begin{subfigure}[b]{0.4\textwidth}
            \centering
            \includegraphics[width=\textwidth]{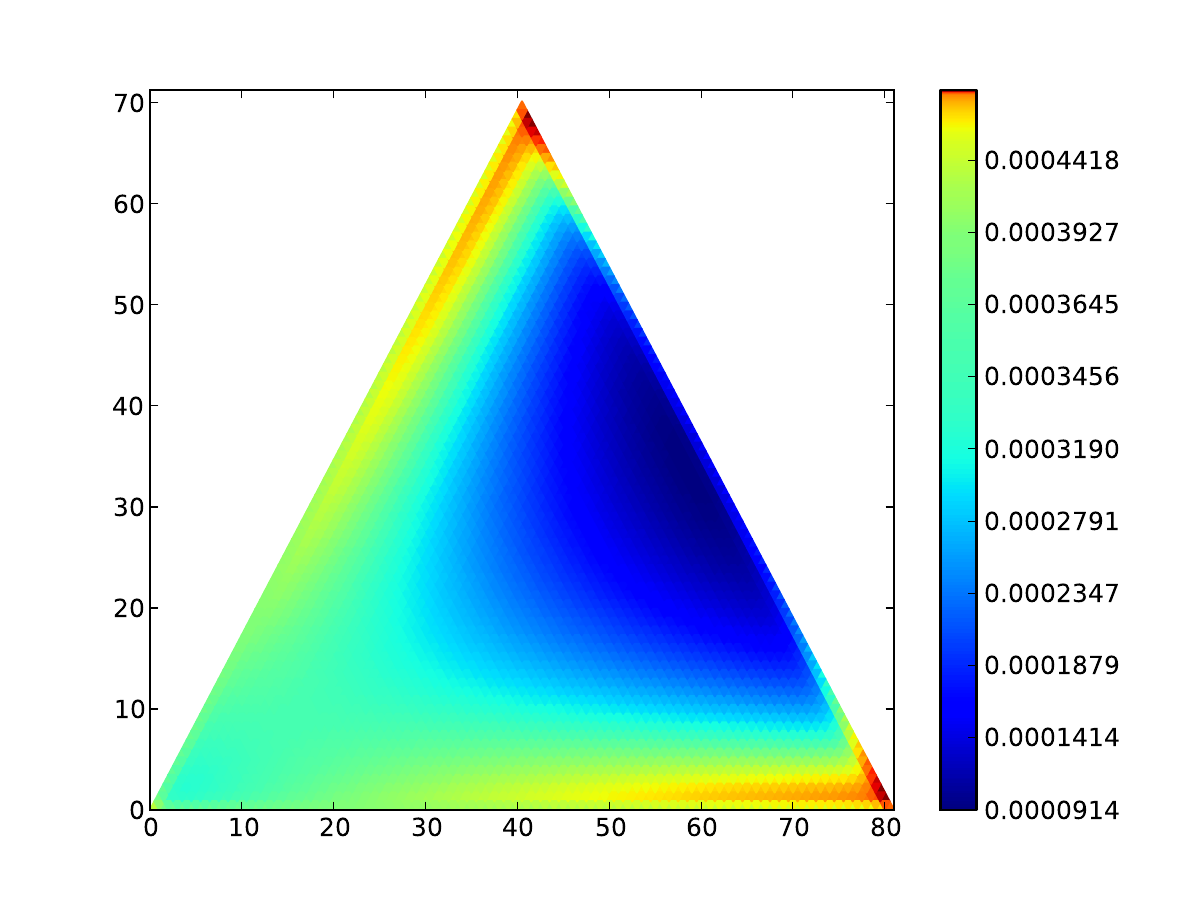}
%             \caption{}
%             \label{}
        \end{subfigure}
        \begin{subfigure}[b]{0.4\textwidth}
            \centering
            \includegraphics[width=\textwidth]{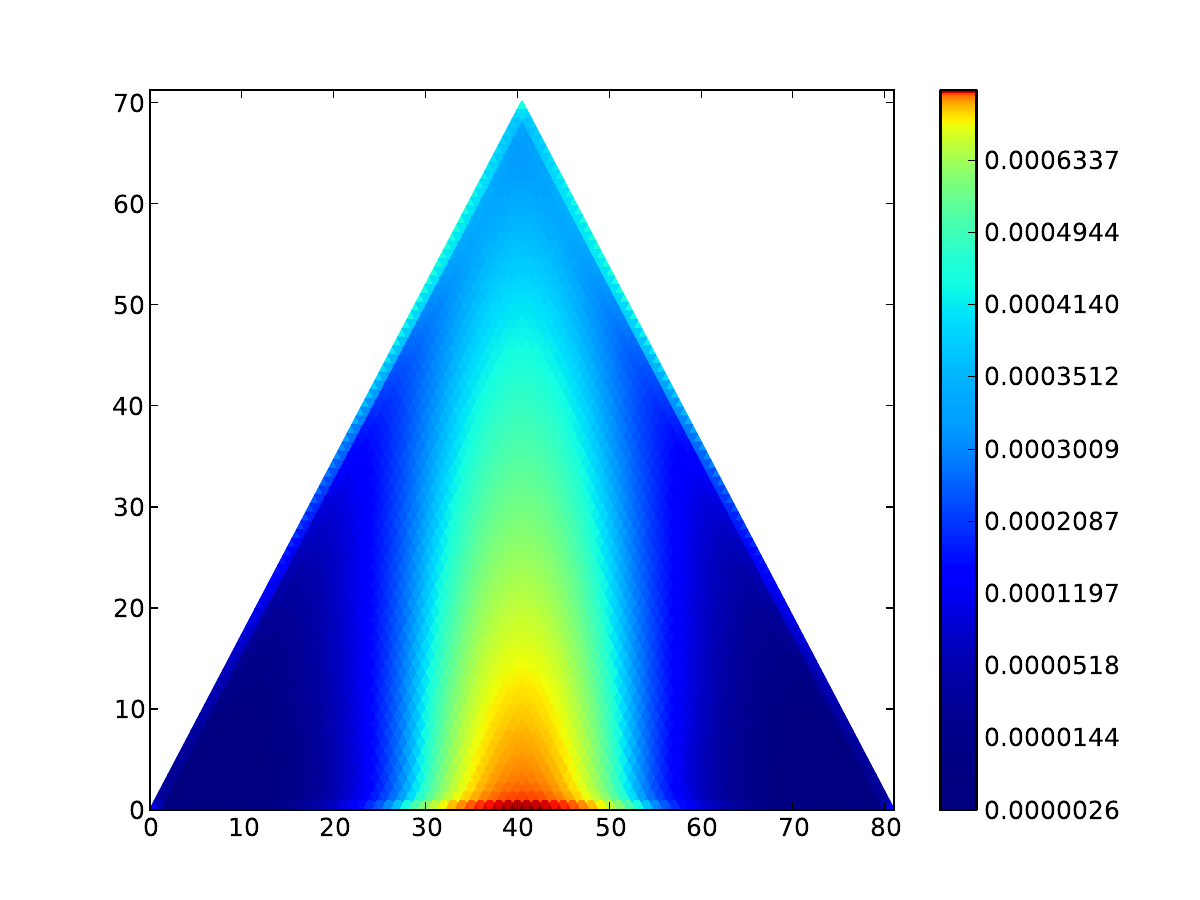}
%             \caption{}
%             \label{}
        \end{subfigure}
        \\
%         \begin{subfigure}[b]{0.3\textwidth}
%             \centering
%             \includegraphics[width=\textwidth]{plots/phase_2.png}
% %             \caption{}
% %             \label{}
%         \end{subfigure}%
%         ~ %add desired spacing between images, e. g. ~, \quad, \qquad etc. 
%           %(or a blank line to force the subfigure onto a new line)
%         \qquad
%         \begin{subfigure}[b]{0.3\textwidth}
%             \centering
%             \includegraphics[width=\textwidth]{plots/phase_20.png}
% %             \caption{}
% %             \label{}
%         \end{subfigure}
%         \qquad
%         \begin{subfigure}[b]{0.3\textwidth}
%             \centering
%             \includegraphics[width=\textwidth]{plots/phase_47.png}
% %             \caption{}
% %             \label{}
%         \end{subfigure}
        \caption{Top Row: Relative entropy of the expected next state with the current state $D_1(a)$ for Fermi incentives ($q=1$, $\beta=1$) for the incentive process with $\mu=\frac{3}{2}\frac{1}{N}$ for game matrices 2, 20, and 47 in Bomze's classification. Bottom Row: Stationary distributions for the incentive process for the same parameters.}
        \label{three_player_plots}
\end{figure} 
\end{landscape}

\begin{landscape}

\subsection{Rock-Scissors-Paper Games}

The rock-scissors-paper game is given by a matrix of the form:
\small{\[ \left( \begin{smallmatrix}
0 &-1 & 1 \\
1 & 0 &-1 \\
-1& 1 & 0
\end{smallmatrix}\right) \]}

In \cite{andrae2010entropy} (online supplement), Andrae et al give steady-state distributions for the rock-scissors-paper game in the context of entropy production. Figure \ref{rsp_plots} contains stationary distributions for the rock-scissors-paper game; these are qualitatively similar to those in Figure 4 of \cite{andrae2010entropy}. Traulsen et al consider the average drift of a relative entropy equivalent Lyapunov quantity in \cite{traulsen2009stochastic} for the Moran process in a finite population, and find that convergence depends on the value of $N$. The rock-scissor-paper game for the Moran process does not yield a detail-balanced Markov process \cite{andrae2010entropy}, and because of the cyclic-nature of the process, the transitions are particularly removed from detailed-balance. This is reflected in the fact that the stationary distribution can take many iterations of the transition matrix to converge. These plots also illustrate that the stability of particular states depends significantly on the mutation rate.

%Computations suggest that $\mu \propto 1/N$ require larger $N$ for process to converge (which is inline with the critical mutation rate findings of Traulsen et al for the neutral landscape \cite{traulsen2005coevolutionary}).

\begin{figure}[h]
        \begin{subfigure}[b]{0.4\textwidth}
            \centering
            \includegraphics[width=\textwidth]{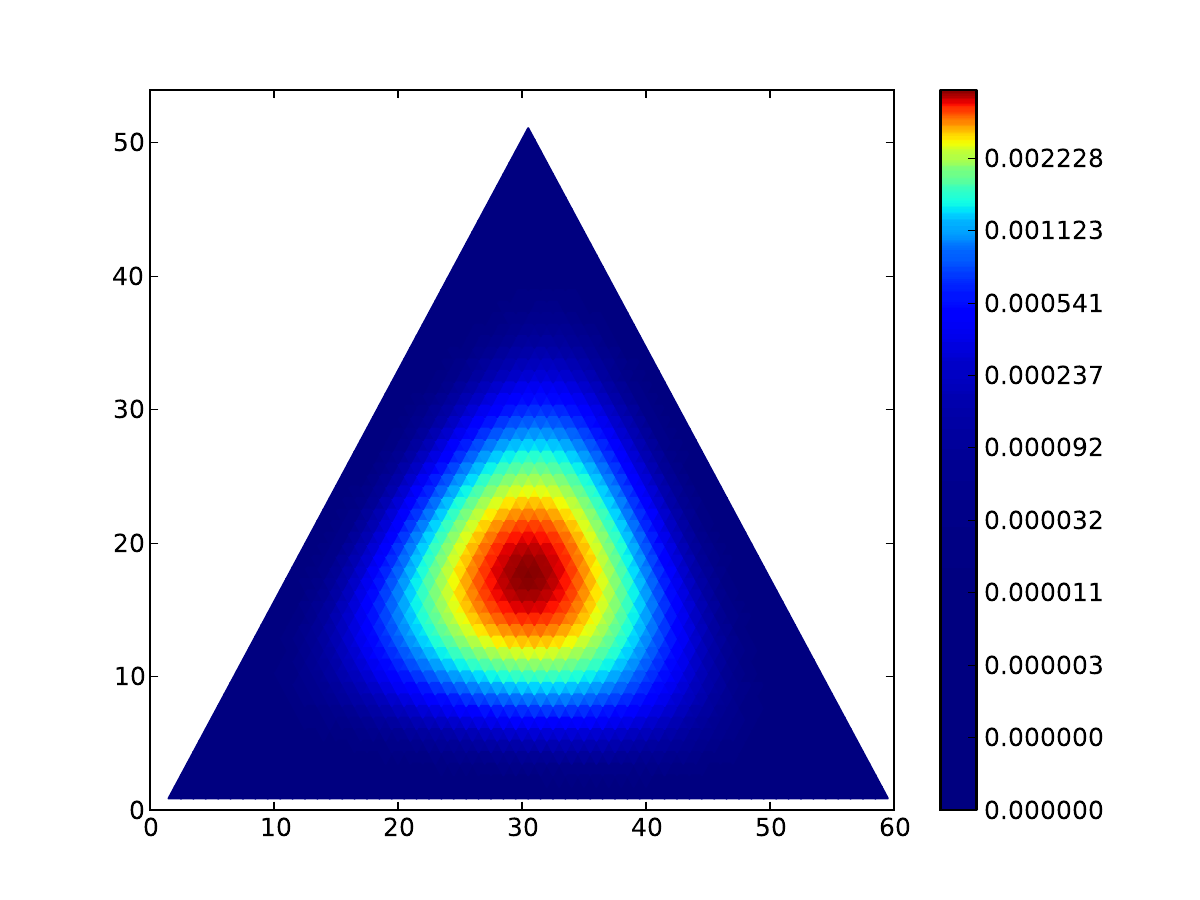}
%             \caption{}
%             \label{}
        \end{subfigure}%
        ~ %add desired spacing between images, e. g. ~, \quad, \qquad etc. 
          %(or a blank line to force the subfigure onto a new line)
        \begin{subfigure}[b]{0.4\textwidth}
            \centering
            \includegraphics[width=\textwidth]{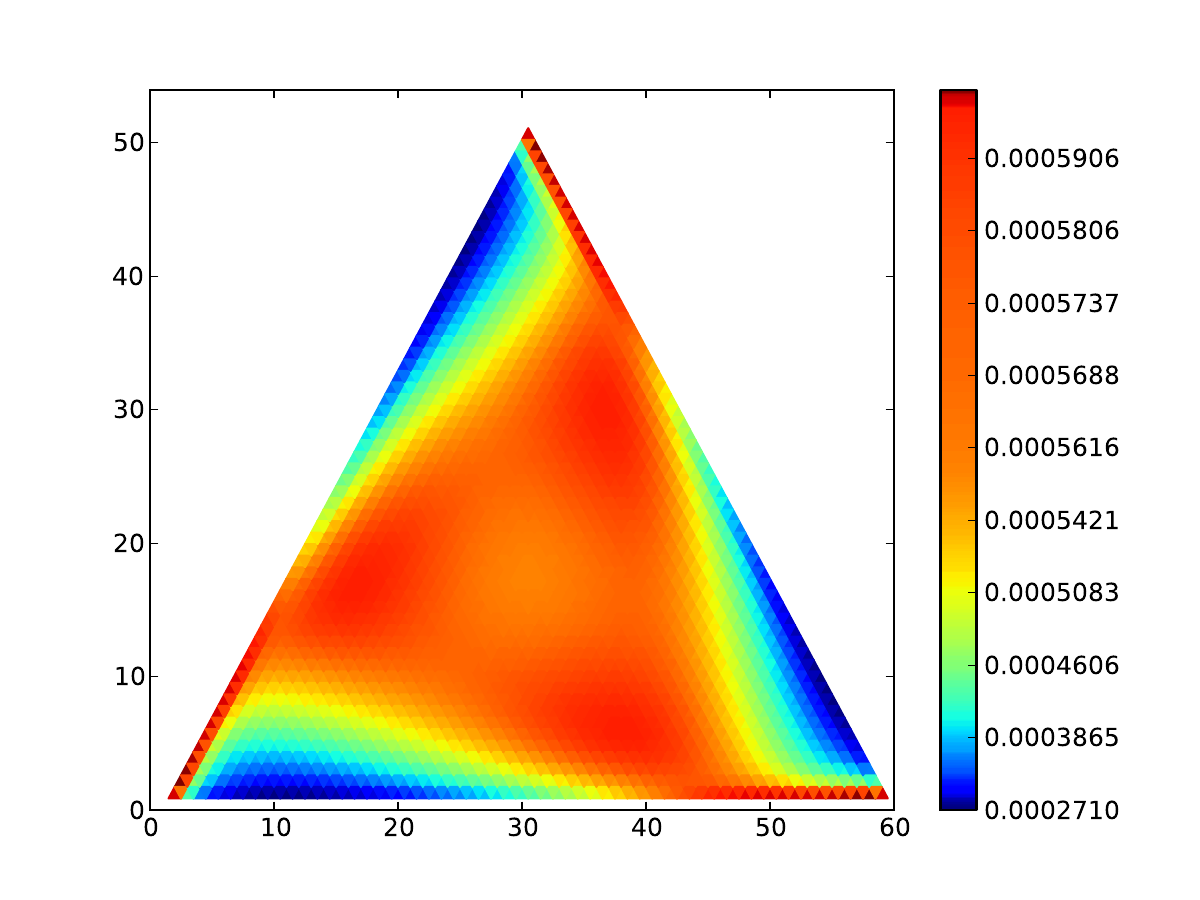}
%             \caption{}
%             \label{}
        \end{subfigure}
        \begin{subfigure}[b]{0.4\textwidth}
            \centering
            \includegraphics[width=\textwidth]{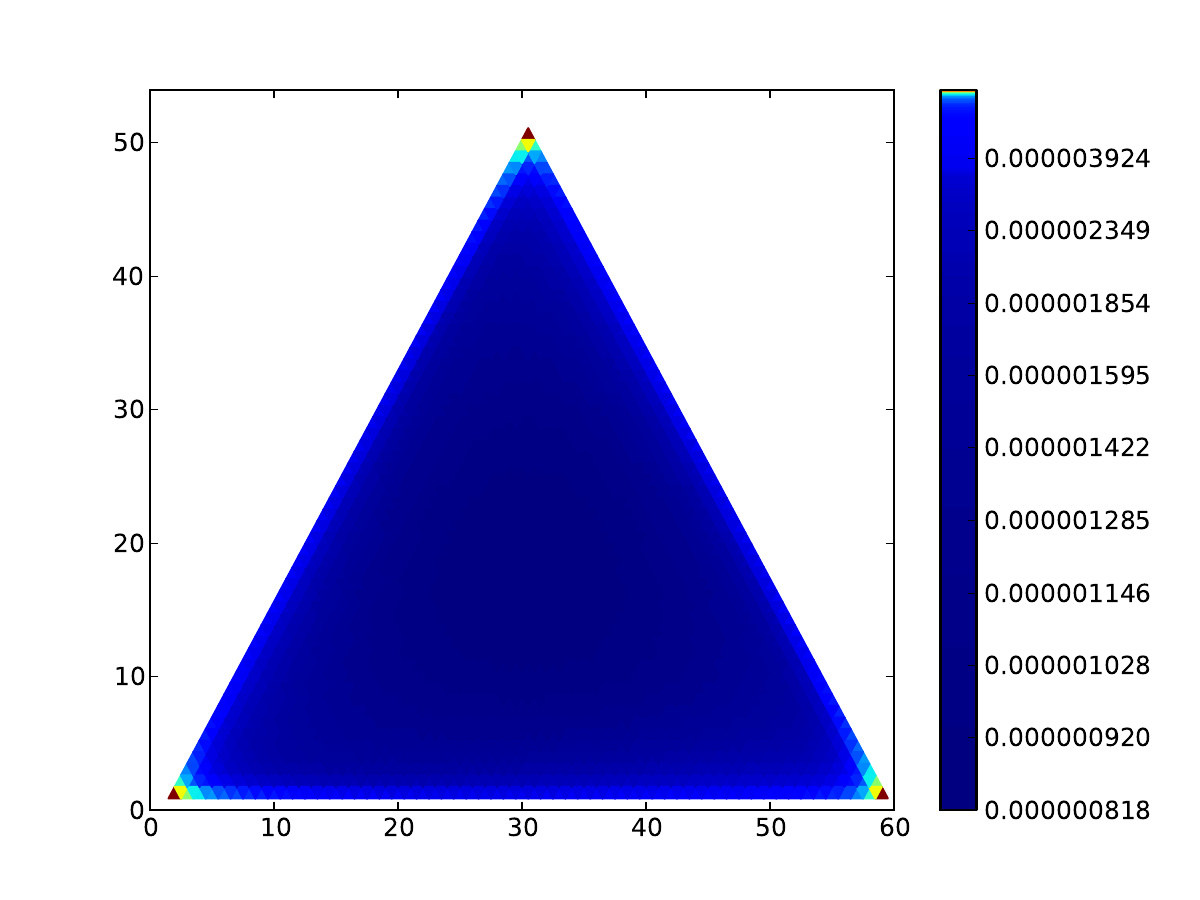}
%             \caption{}
%             \label{}
        \end{subfigure}
        \caption{Stationary distributions for the rock-scissors-paper game for $N=60$ with the Fermi incentive, $\beta=1$. Left to right we have $\frac{2}{3}\mu = \frac{1}{\sqrt{N}}, \frac{1}{N}, \frac{1}{N^{3/2}}$.}
        \label{rsp_plots}
\end{figure} 
\end{landscape}

\subsection{Four Types}

For completeness, we include a higher dimensional example. Consider the incentive process defined by the Fermi incentive, $N=60$, $\mu=1/N$ and the game matrix 
\small{\[ M_4 = \left( \begin{smallmatrix}
0 & 1 & 1 & 1\\
1 & 0 & 1 & 1\\
1 & 1 & 0 & 1\\
0 & 0 & 0 & 1\\
\end{smallmatrix}\right) \]}

Although we cannot easily plot the full stationary distribution, we can plot the two-dimensional boundary simplices. Figure \ref{figure_4_d} gives the $D_0$ expectation distance and the stationary distribution for two of the four faces. Three are the same; the fourth, where $a_4=0$, is distinct, and similar to the stationary distribution in Figure \ref{figure_graphical_abstract}. From inspection of the game matrix, we would predict that $(N/3, N/3, N/3)$ is an ISS for the distinct face. The three similar faces have boundary ISS such as $(N/2,0,N/2)$. Note that the stationary distribution and $D_0$ expected distances are computed for the full process on the three-dimensional state space; only the two-dimensional faces are plotted.

\begin{figure}[h]
        \begin{subfigure}[b]{0.4\textwidth}
            \centering
            \includegraphics[width=\textwidth]{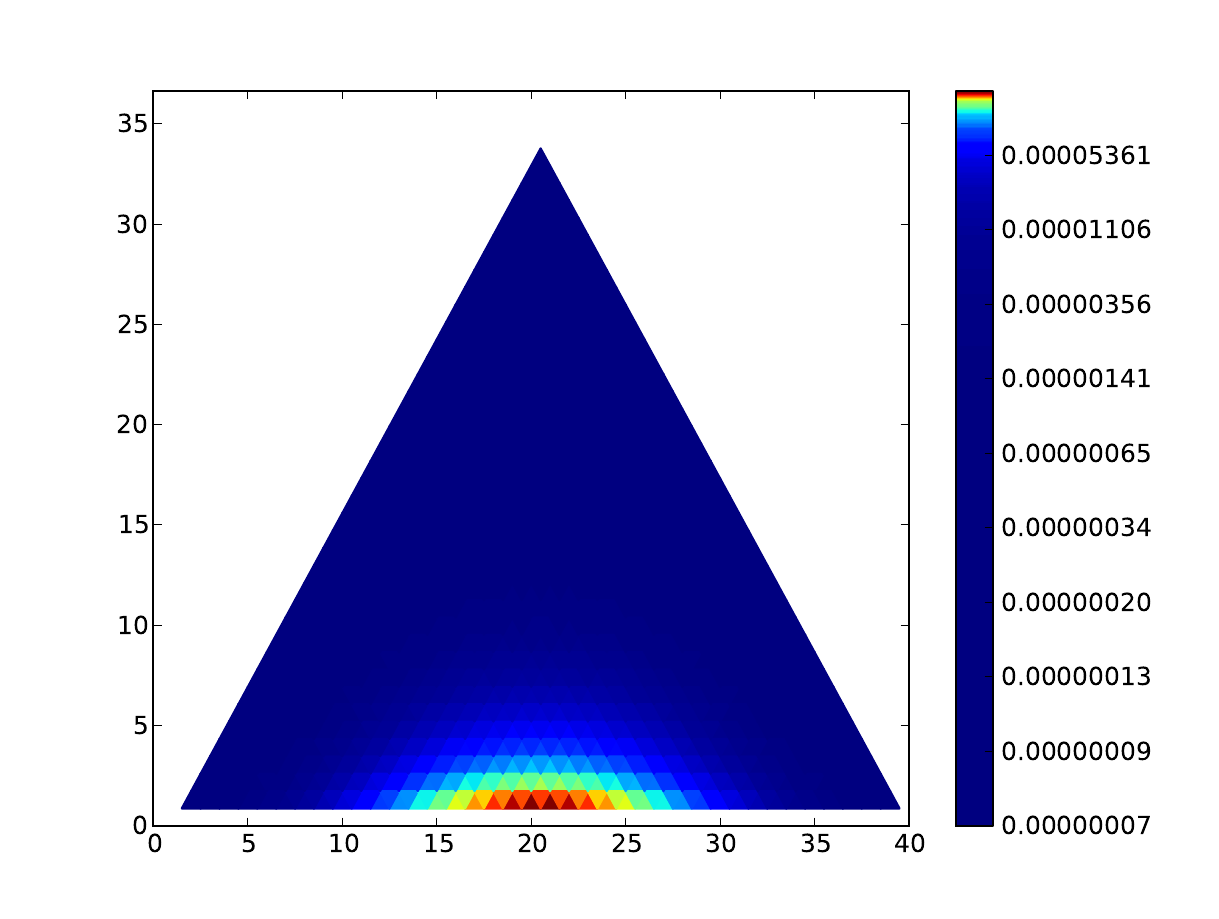}
%             \caption{}
%             \label{}
        \end{subfigure}%
        ~ %add desired spacing between images, e. g. ~, \quad, \qquad etc. 
          %(or a blank line to force the subfigure onto a new line)
        \begin{subfigure}[b]{0.4\textwidth}
            \centering
            \includegraphics[width=\textwidth]{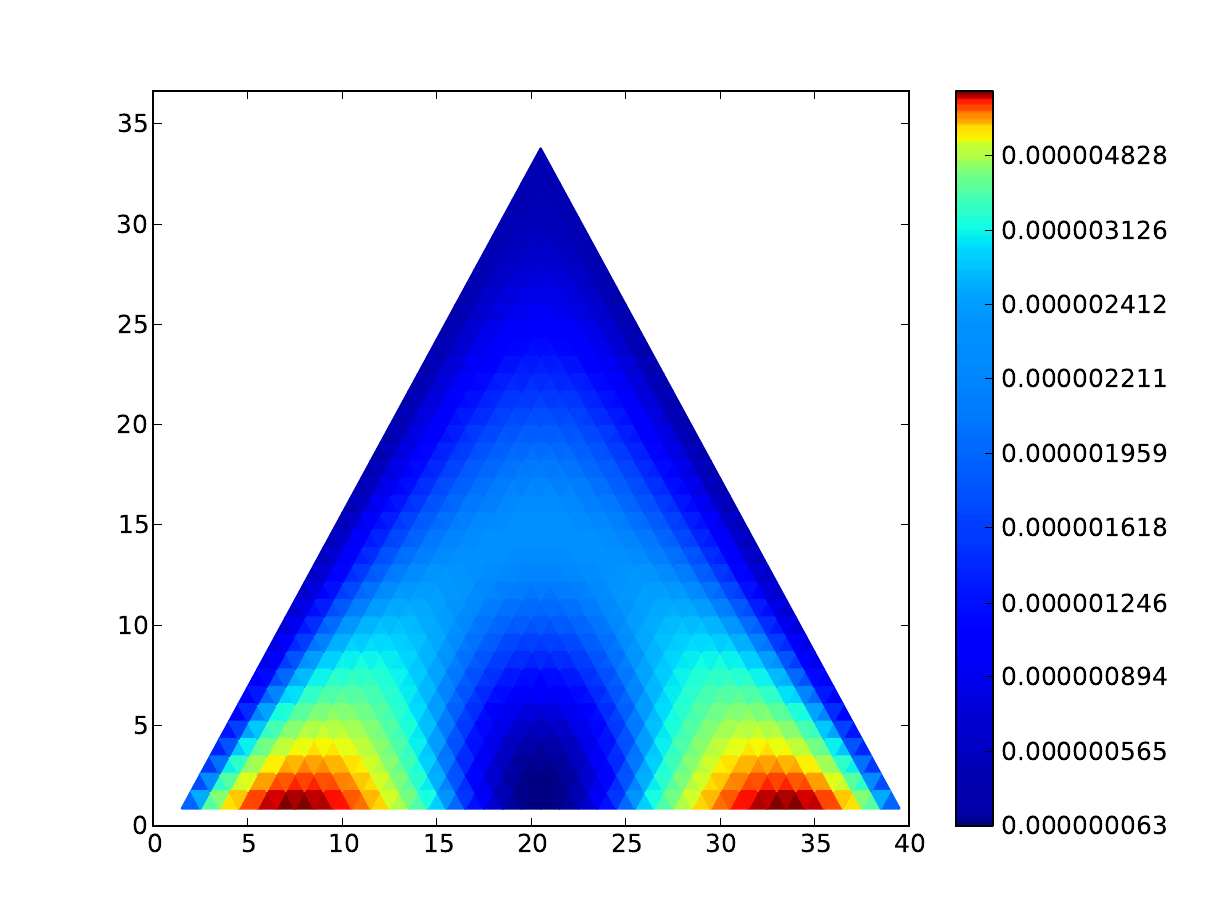}
%             \caption{}
%             \label{}
        \end{subfigure}
        \\
        \begin{subfigure}[b]{0.4\textwidth}
            \centering
            \includegraphics[width=\textwidth]{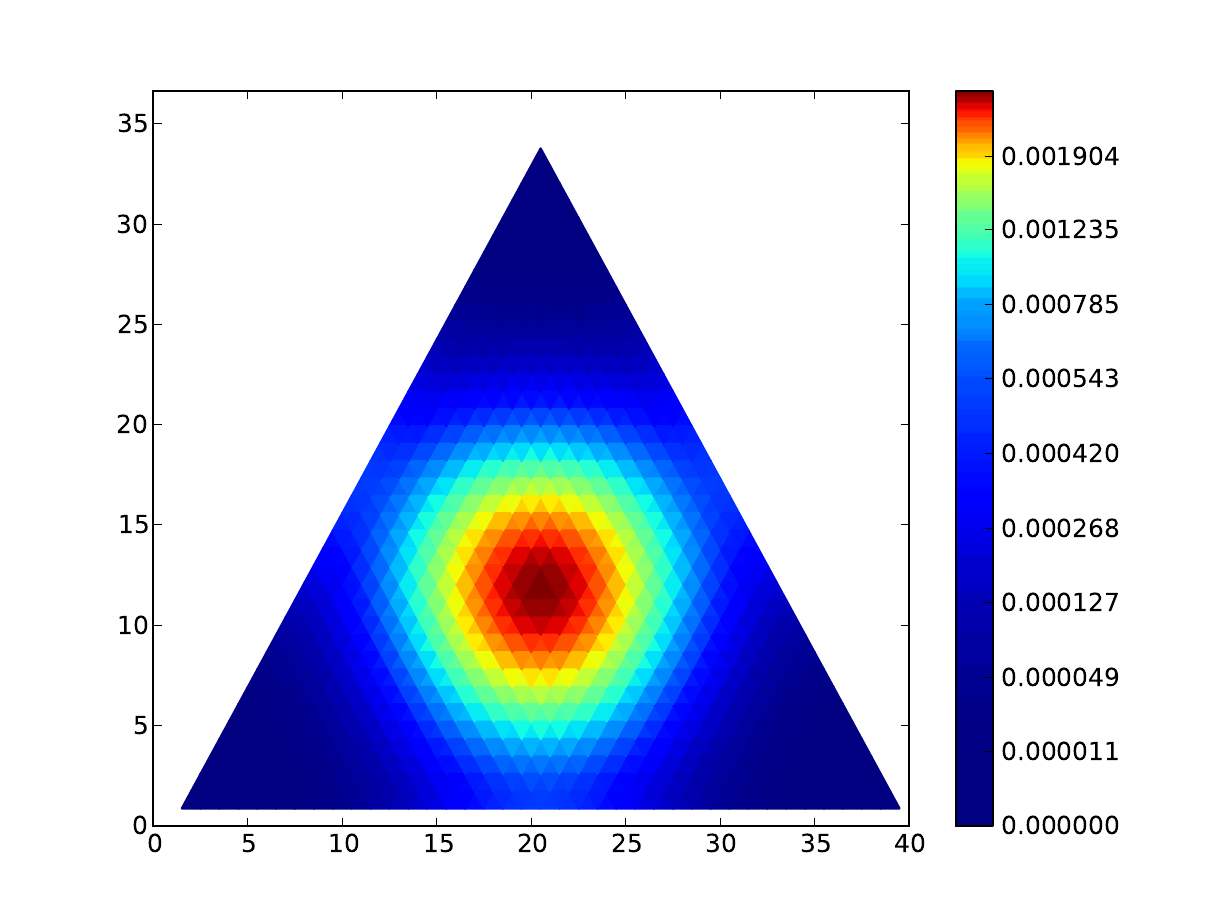}
%             \caption{}
%             \label{}
        \end{subfigure}%
        ~ %add desired spacing between images, e. g. ~, \quad, \qquad etc. 
          %(or a blank line to force the subfigure onto a new line)
        \begin{subfigure}[b]{0.4\textwidth}
            \centering
            \includegraphics[width=\textwidth]{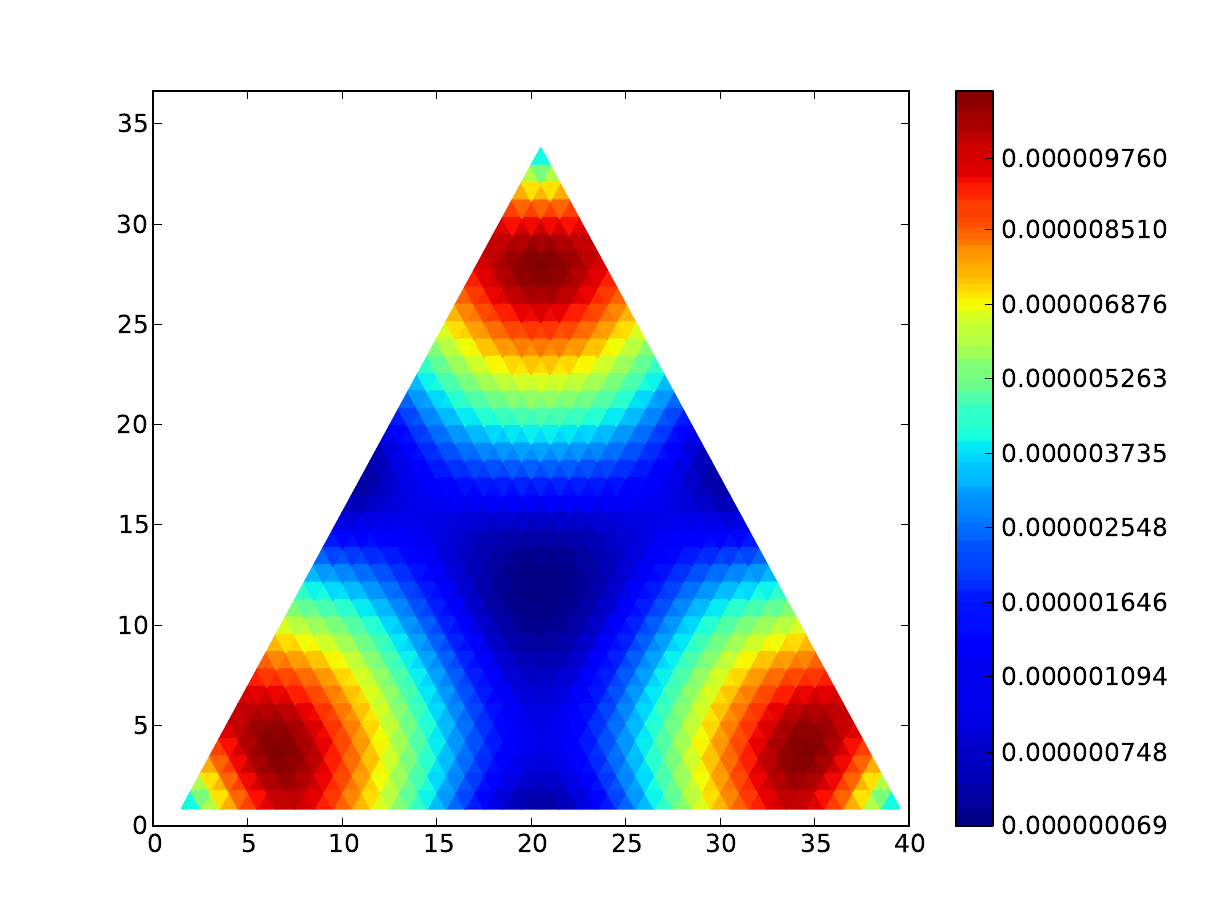}
%             \caption{}
%             \label{}
        \end{subfigure}
        \caption{Top Left: Stationary distribution for the three common faces for the incentive process for matrix $M_4$ above, $N=40$. Bottom Left: Stationary distribution for the face with $a_4=0$. Right: $D_0$ expected distances for the faces to the left. }
        \label{figure_4_d}
\end{figure} 

\section{The Wright-Fisher Process}

The Wright-Fisher process behaves differently than the Moran process for many combinations of parameters; see, e.g. \cite{imhof2006evolutionary}. In general, even for two-types, the Wright-Fisher process is not locally balanced \cite{taylor2006symmetry}. Because of the combinatorial complexity of the process, the Wright-Fisher process is often studied via diffusion-type approximations. In some cases, these differential equations produce a qualitatively similar result to our main theorem for the incentive process. For instance in \cite{chalub2011frequency}, the authors show that for the replicator incentive, the limiting distribution of the diffusion approximation concentrates on ESS. Combined with the fact that $E(\bar{a})$ is the same as for the incentive process (Prop \ref{expected_incentive}), one can reasonably expect some subset of the parameters to produce a similar result to Theorem \ref{main_thm_local} for the Wright-Fisher process. See Figure \ref{three_player_plots} for examples.

% As such, we will not attempt an analog of Theorem \ref{main_thm_global} for the Wright-Fisher process, we can give an partial analog of Theorem \ref{main_thm_local}.  The proof applies to fitness landscapes such as $a=d$ and $c=d\neq a$, and the basic idea of the proof to other two-type landscapes with a unique interior stationary max.

We give an partial analog of Theorem \ref{main_thm_local} for the Wright-Fisher process. Crucially, note that a local maximum of the stationary distribution of the Wright-Fisher process is in fact a global maximum since every state is connected directly to every other state (assuming that that transition probabilities are never zero), and therefore is necessarily unique.

\begin{theorem}
Suppose a given incentive and mutation matrix are continuous. For the Wright-Fisher process, suppose that the stationary distribution has a global maximum at $a$, is symmetric about the maximum, and is otherwise vanishingly small (if the maximum is not central). Then for sufficiently large $N$, the state $a$ is an ISS candidate and a minimum of the distance $D_d$.
\label{main_thm_wf}
\end{theorem}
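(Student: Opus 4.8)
The plan is to reduce the statement to Proposition~\ref{prop_expected_next}, which says that for the Wright--Fisher process the three conditions ``$\bar a = p(\bar a)$'', ``$E(\bar a) = \bar a$'', and ``$D_d(a) = 0$ for all $0 \le d \le 1$'' are equivalent; so it suffices to show that the hypotheses force $\bar a$ to be (asymptotically) an ISS candidate. The one exact identity I would lean on is that the stationary distribution preserves the mean of the state: if $X$ is distributed according to $s$ then so is its one-step image, and since for Wright--Fisher $E(\bar c) = p(\bar c)$ (the mean of a multinomial), taking expectations in $\sum_b \bar b\, s_b = \sum_b \bar b \sum_c s_c T_c^b$ gives
\[ \sum_{c} s_c\bigl(p(\bar c) - \bar c\bigr) = 0. \]
This holds with no hypothesis whatsoever; the content of the theorem is to extract $p(\bar a) - \bar a \approx 0$ from this single vector equation, and that is exactly where the concentration and symmetry assumptions do their work.

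First I would dispatch the non-central case. By hypothesis the stationary mass is vanishingly small away from $a$, so $s_a \to 1$ and $s_c \to 0$ for $c \ne a$ as $N \to \infty$; since every $p(\bar c) - \bar c$ is bounded (difference of probability vectors), the tail $\sum_{c \ne a} s_c(p(\bar c) - \bar c)$ is vanishingly small, whence $s_a\bigl(p(\bar a) - \bar a\bigr)$ is too, and dividing by $s_a \to 1$ gives $\lvert p(\bar a) - \bar a\rvert \to 0$. For the central/symmetric case I would instead Taylor-expand about $\bar a$: writing $g = p - \mathrm{id}$ (smooth for the $q$-Fermi and $q$-replicator incentives on a linear landscape), $g(\bar c) = g(\bar a) + Dg(\bar a)(\bar c - \bar a) + R(\bar c)$ with $\lVert R(\bar c)\rVert = O(\lVert \bar c - \bar a\rVert^2)$. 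Summing against $s_c$ and using $\sum_c s_c = 1$ together with the fact that symmetry of $s$ about $a$ makes the first moment $\sum_c s_c(\bar c - \bar a)$ vanish, the identity collapses to $g(\bar a) = -\sum_c s_c R(\bar c)$, whose norm is bounded by a constant times the variance of $s$ about $a$. As the stationary distribution concentrates, that variance is $o(1)$, so again $\lvert p(\bar a) - \bar a\rvert \to 0$. (The two arguments unify: symmetry kills the linear term, concentration kills the quadratic remainder.)

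Finally, with $\lvert p(\bar a) - \bar a\rvert \to 0$ in hand, I would conclude as at the end of the proof of Theorem~\ref{main_thm_local}. For Wright--Fisher $E(\bar a) - \bar a = p(\bar a) - \bar a$, so $D_d(a) = \hat D_d(E(\bar a), \bar a)$ is bounded above by the $\chi^2$-distance $\sum_i (p(\bar a)_i - \bar a_i)^2/\bar a_i \le \lVert p(\bar a) - \bar a\rVert^2/\min_i \bar a_i \to 0$; and since (as noted just before the theorem) the maximum of the Wright--Fisher stationary distribution is unique and global, the transition imbalance is strictly larger at neighbouring states, so $D_d$ is locally minimal at $a$. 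If the solution of $p(\bar a) = \bar a$ happens to be integral this yields $D_d(a) = 0$ and an exact ISS candidate via Proposition~\ref{prop_expected_next}; otherwise the maximum sits within $O(1/N)$ of the ISS candidate, which is the best one can ask for on a discretized simplex.

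I expect the main obstacle to be the symmetric (central) case, because symmetry of $s$ does not pass through the nonlinear map $p$: one cannot simply pair reflected states and cancel, and the linearization above is only legitimate if the stationary distribution concentrates fast enough in $N$ that the second-order remainder is negligible next to $s_a$. Quantifying that concentration from the stated hypotheses — ideally with an explicit rate, so that one knows how large $N$ must be — is the delicate point, and is presumably the reason this is offered only as a partial analog of Theorem~\ref{main_thm_local} under the extra structural assumptions on $s$.
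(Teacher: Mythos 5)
Your argument is sound in outline but takes a genuinely different route from the paper's. The paper works locally: from $s = sT$ it writes $s_{i+1} - s_{i} = \sum_k s_k\bigl(T_k^{i+1} - T_k^{i}\bigr)$, uses maximality and symmetry to extract the single balance condition $T_j^{j+1} = T_j^{j-1}$ at the maximum, and then checks by direct algebra on the multinomial transition probabilities that this condition reads $(N-j)p_1(j) = (j+1)(1-p_1(j))$, i.e.\ $p_1(j) = (j+1)/(N+1)$, which agrees with the ISS candidate equation $p_1(j) = j/N$ as $N \to \infty$; for $n>2$ it repeats this over all adjacent states to get $p_{\alpha}(\bar a)\bar a_{\beta} = p_{\beta}(\bar a)\bar a_{\alpha}$ and sums. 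You instead start from the global mean-preservation identity $\sum_c s_c\bigl(p(\bar c) - \bar c\bigr) = 0$ and localize it at $a$, by concentration in the non-central case and by symmetry plus a Taylor expansion in the central case. Your route is cleaner in the non-central case --- it needs no algebra on the transition probabilities, is dimension-independent where the paper only details $n=2$, and it makes explicit which hypothesis does which job. What the paper's local computation buys is that the balance condition at the maximum is an exact statement about two transition probabilities out of a single state, rather than an average over the whole simplex, so it does not need a second-moment bound.

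The one real gap is the one you flagged yourself: in the central case the theorem assumes only symmetry, and your Taylor argument additionally needs the variance $\sum_c s_c\lVert\bar c - \bar a\rVert^2$ to vanish, which is not among the stated hypotheses (and your smoothness requirement on $p$ is likewise stronger than the stated continuity). Be aware, though, that the paper's proof has the mirror-image problem: to pass from $\sum_k s_k\bigl(T_k^{j+1} - T_k^{j-1}\bigr) = 0$, which symmetry gives for free, to the single-term statement $T_j^{j+1} = T_j^{j-1}$, it implicitly needs the $k=j$ term to dominate, i.e.\ some concentration of $s$ near $j$ that symmetry alone does not supply. So your proposal is no weaker than the published argument on this point; to close the gap in either version one would have to prove concentration of the Wright--Fisher stationary distribution (variance $O(1/N)$ on the simplex) as a lemma rather than assume it.
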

\begin{proof}
We only give the case for $n=2$ in detail. Since the stationary distribution satisfies $s=sT$, we have that
\[ s_{i+1} - s_{i} = \sum_{k}{s_k \left(T_{k}^{i+1} - T_{k}^{i}\right)},\]
where $s_{i+1}$ denotes an adjacent state and by continuity for some $j$ we must have that $T_{j}^{j+1} - T_{j}^{j-1} = 0$ (at the maximum since the distribution is symmetric). A bit of algebra shows that this occurs when
\[ (N-j) p_1(j) = (j+1) (1 - p_1(j)).\]
For sufficiently large $N$, solutions to this equation are the same as those of the ISS candidate equation. For $n > 2$, looking at all the immediately adjacent states leads to $p_{\alpha}(\bar{a}) \bar{a}_{\beta} = p_{\beta}(\bar{a}) \bar{a}_{\alpha}$ for large $N$, which implies by summation that $p(\bar{a}) = \bar{a}$.
\end{proof}

Computationally, we can compute the stationary distribution of the Wright-Fisher process for any combination of parameters for smaller $N \approx 100$ easily. Figures \ref{three_player_plots_wf} and \ref{three_player_plots_wf} give a variety of computational examples, some of which suggest that a more general version of Theorem \ref{main_thm_wf} holds. Note the sensitivity to the mutation rate $\mu$. Qualitatively, Figure \ref{figure_graphical_abstract} is similar for the Wright-Fisher process on the interior. Figures for all 48 of the landscape in Bomze's classification are also available online at \url{http://people.mbi.ucla.edu/marcharper/stationary_stable/3x3/}.

\begin{landscape}

\centering
\begin{figure}[h]
        \begin{subfigure}[b]{0.4\textwidth}
            \centering
            \includegraphics[width=\textwidth]{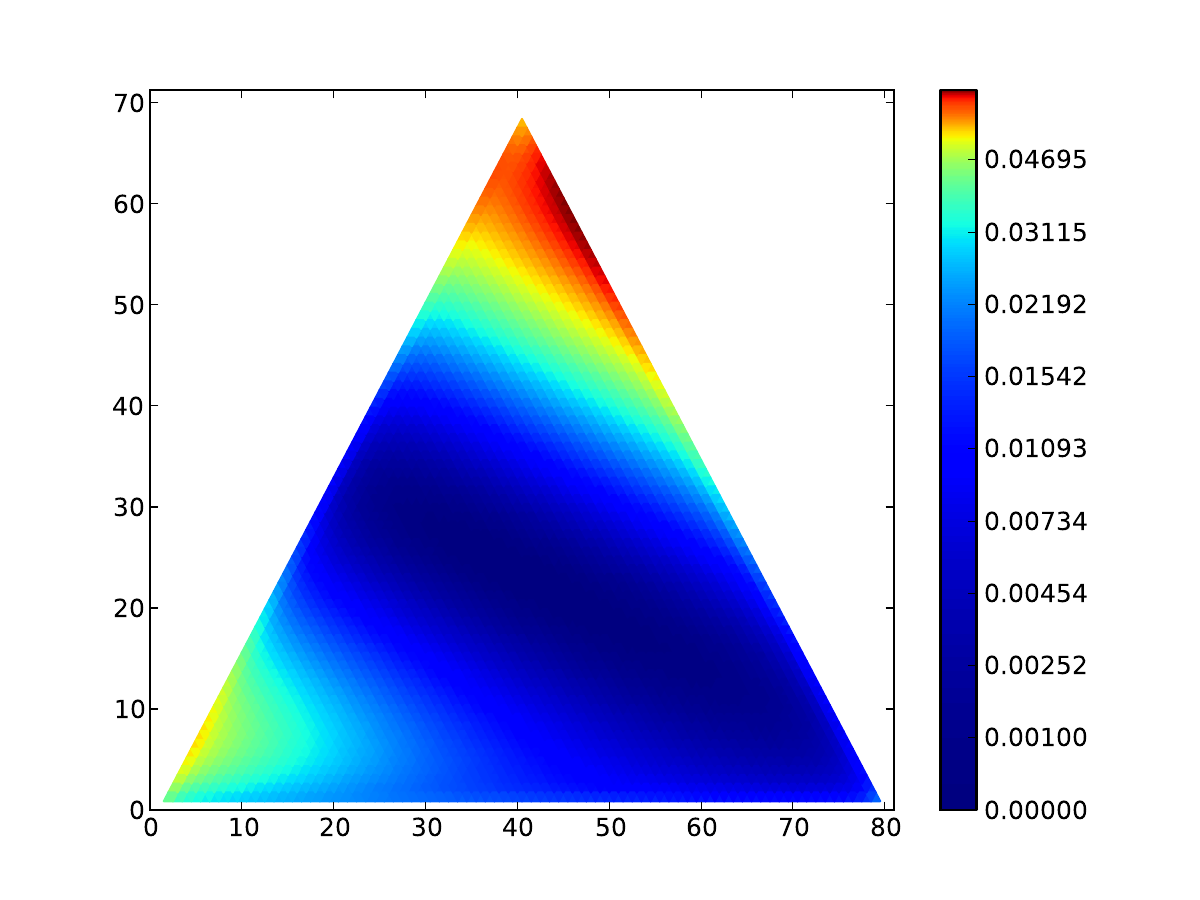}
%             \caption{}
%             \label{}
        \end{subfigure}%
        ~ %add desired spacing between images, e. g. ~, \quad, \qquad etc. 
          %(or a blank line to force the subfigure onto a new line)
        \begin{subfigure}[b]{0.4\textwidth}
            \centering
            \includegraphics[width=\textwidth]{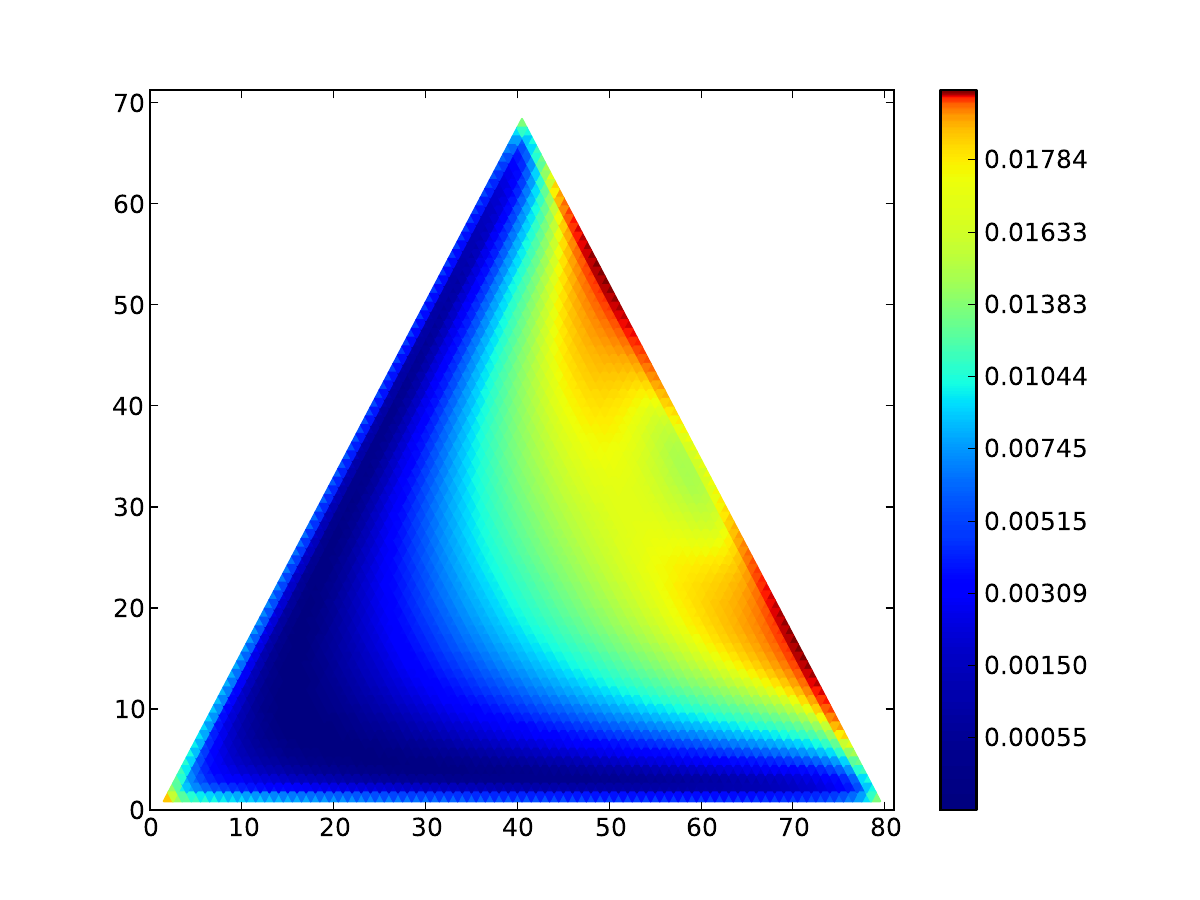}
%             \caption{}
%             \label{}
        \end{subfigure}
        \begin{subfigure}[b]{0.4\textwidth}
            \centering
            \includegraphics[width=\textwidth]{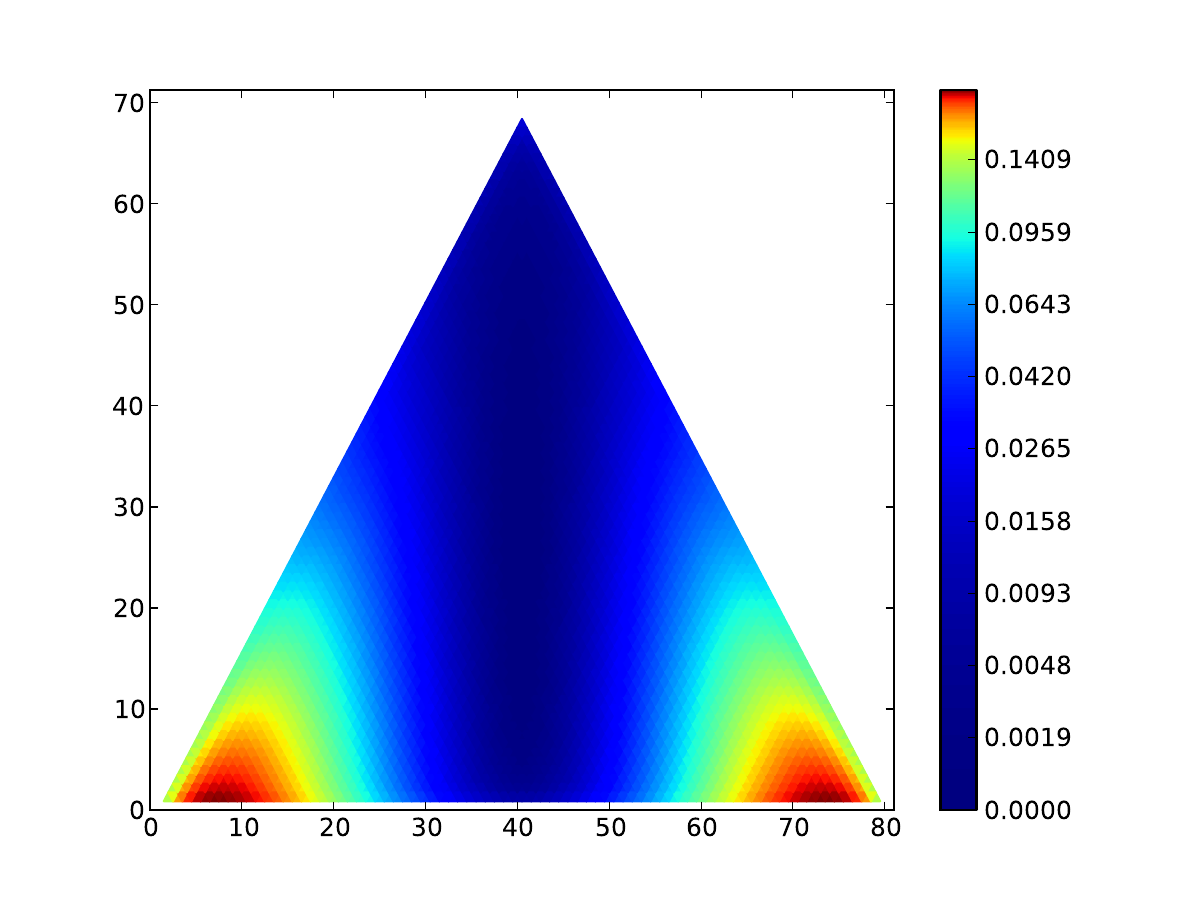}
%             \caption{}
%             \label{}
        \end{subfigure}
        \\
        \begin{subfigure}[b]{0.4\textwidth}
            \centering
            \includegraphics[width=\textwidth]{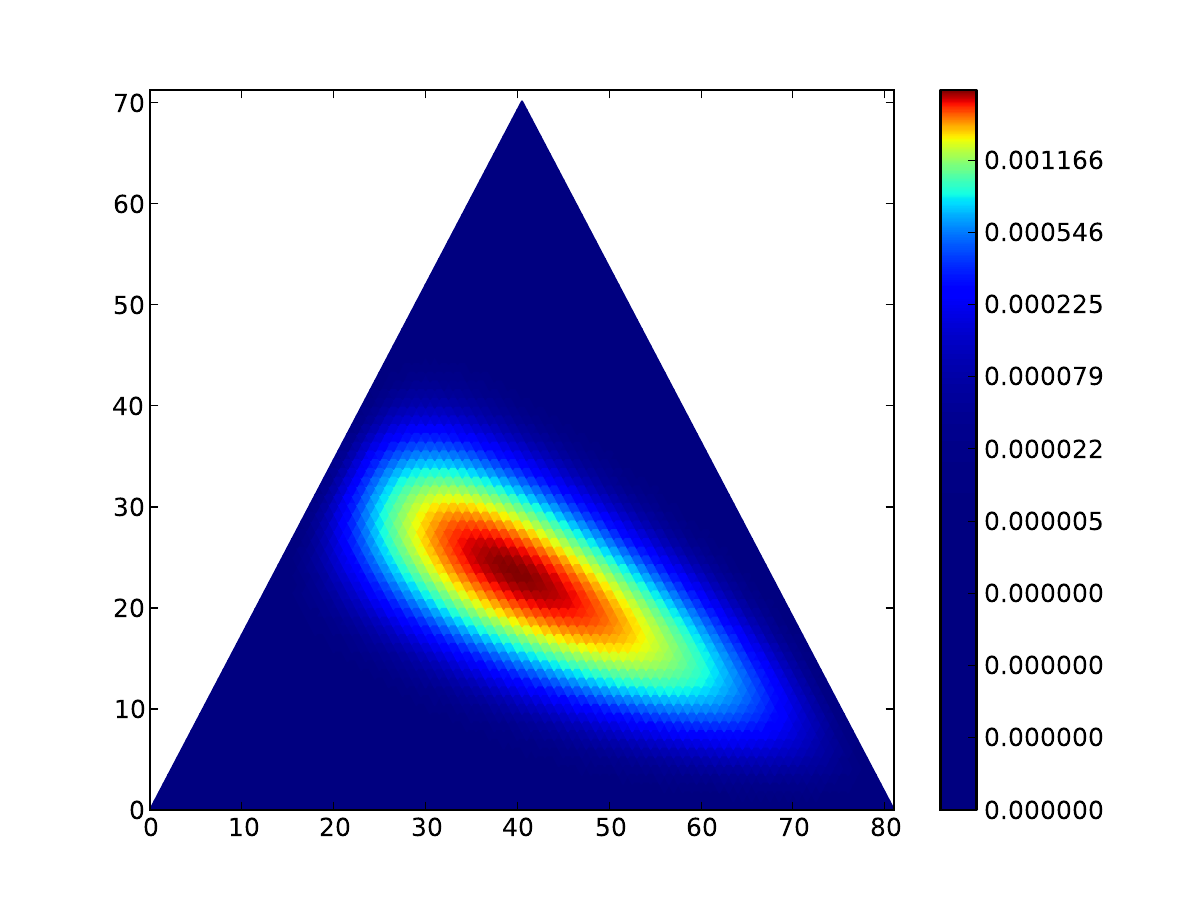}
%             \caption{}
%             \label{}
        \end{subfigure}%
        ~ %add desired spacing between images, e. g. ~, \quad, \qquad etc. 
          %(or a blank line to force the subfigure onto a new line)
        \begin{subfigure}[b]{0.4\textwidth}
            \centering
            \includegraphics[width=\textwidth]{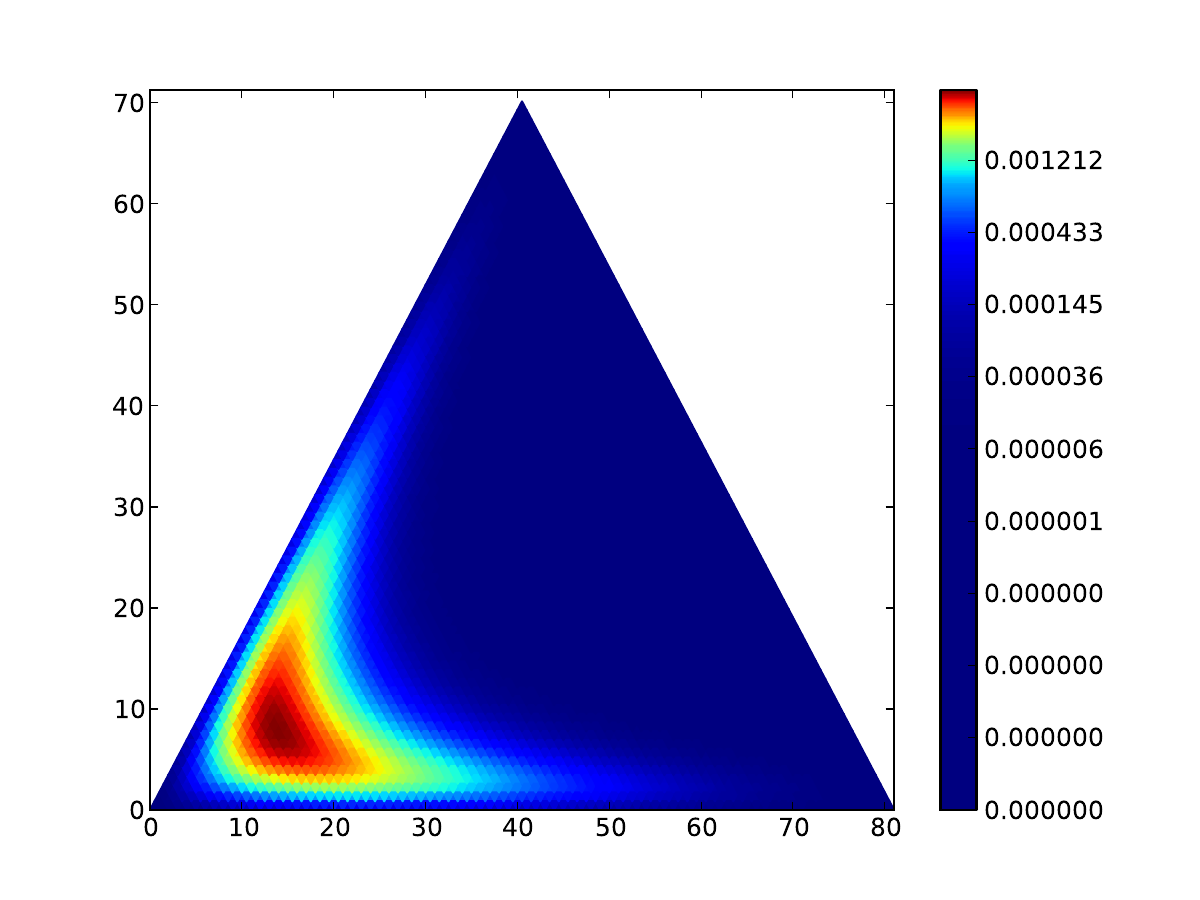}
%             \caption{}
%             \label{}
        \end{subfigure}
        \begin{subfigure}[b]{0.4\textwidth}
            \centering
            \includegraphics[width=\textwidth]{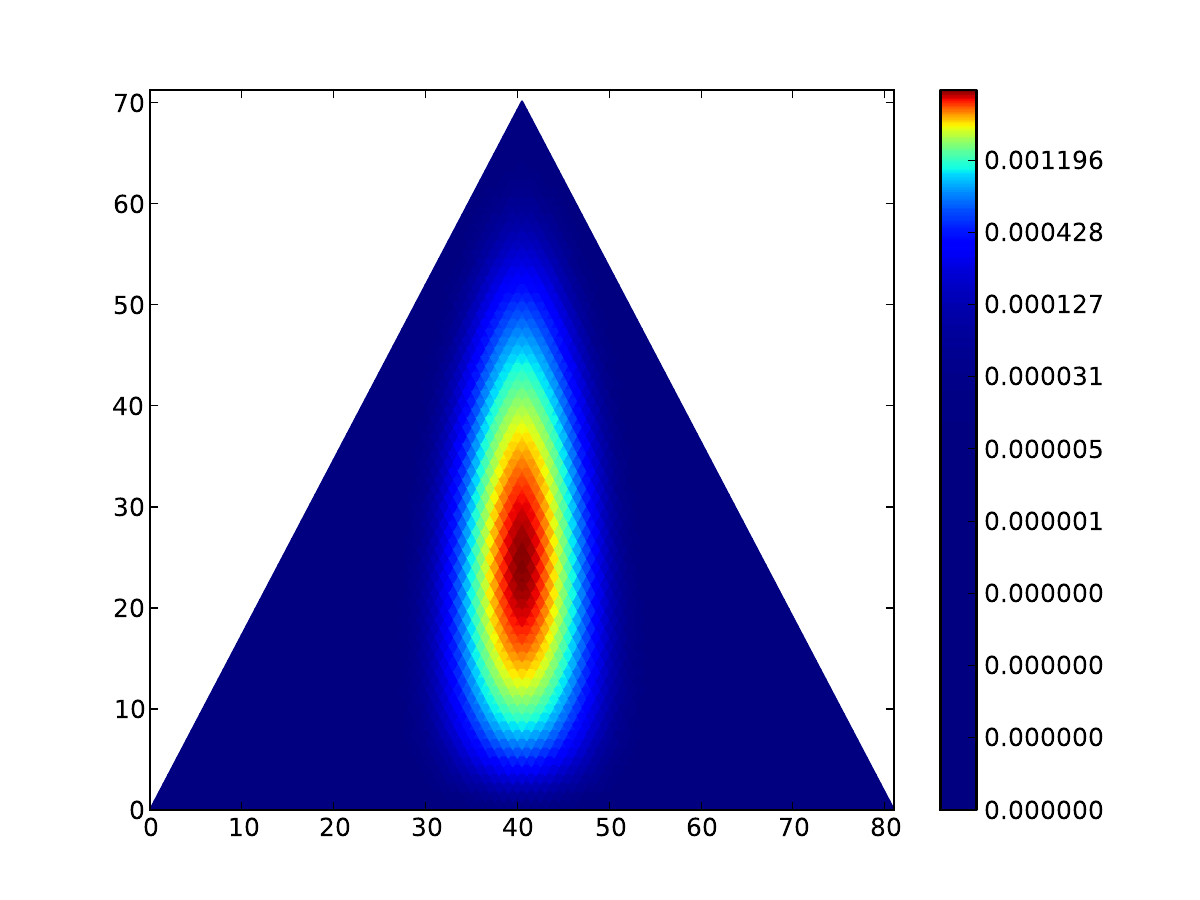}
%             \caption{}
%             \label{}
        \end{subfigure}
        \\
        \begin{subfigure}[b]{0.4\textwidth}
            \centering
            \includegraphics[width=\textwidth]{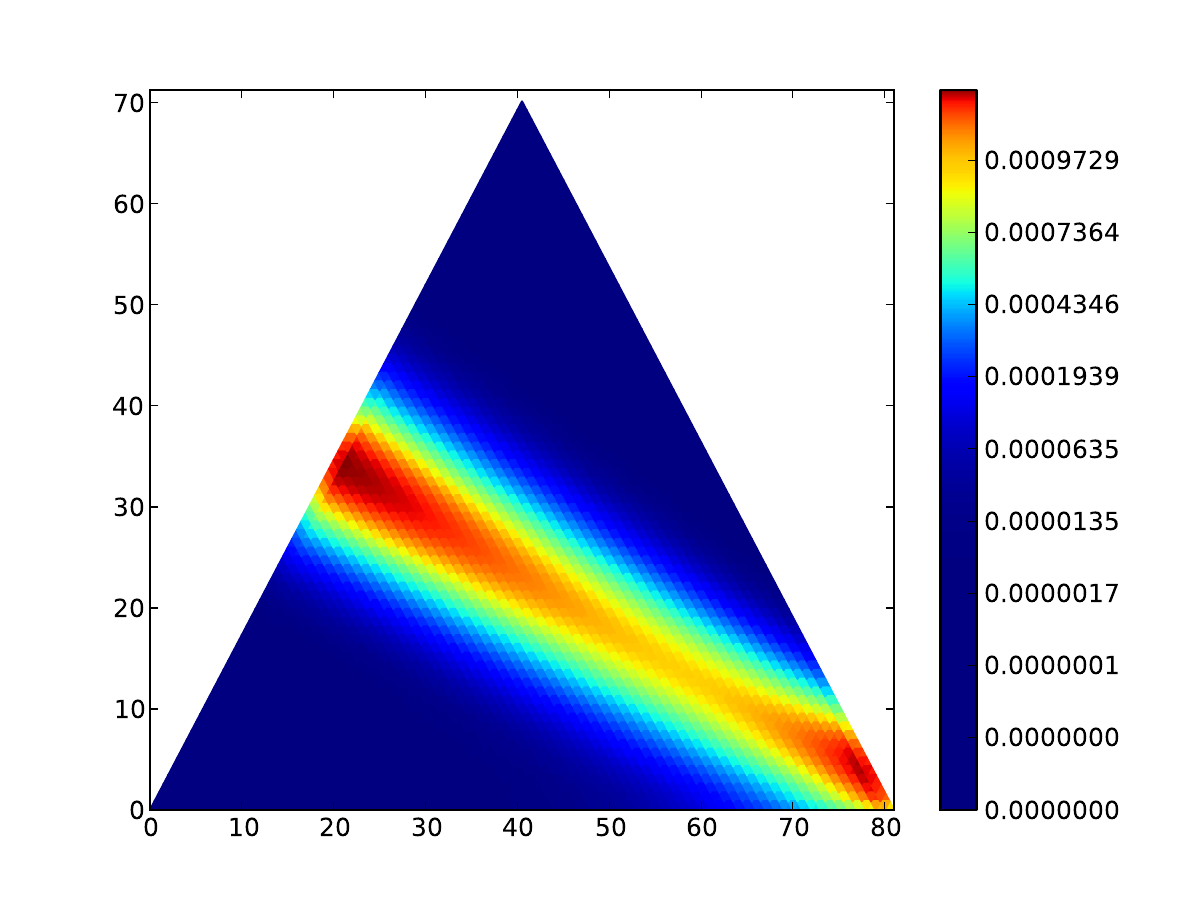}
%             \caption{}
%             \label{}
        \end{subfigure}%
        ~ %add desired spacing between images, e. g. ~, \quad, \qquad etc. 
          %(or a blank line to force the subfigure onto a new line)
        \begin{subfigure}[b]{0.4\textwidth}
            \centering
            \includegraphics[width=\textwidth]{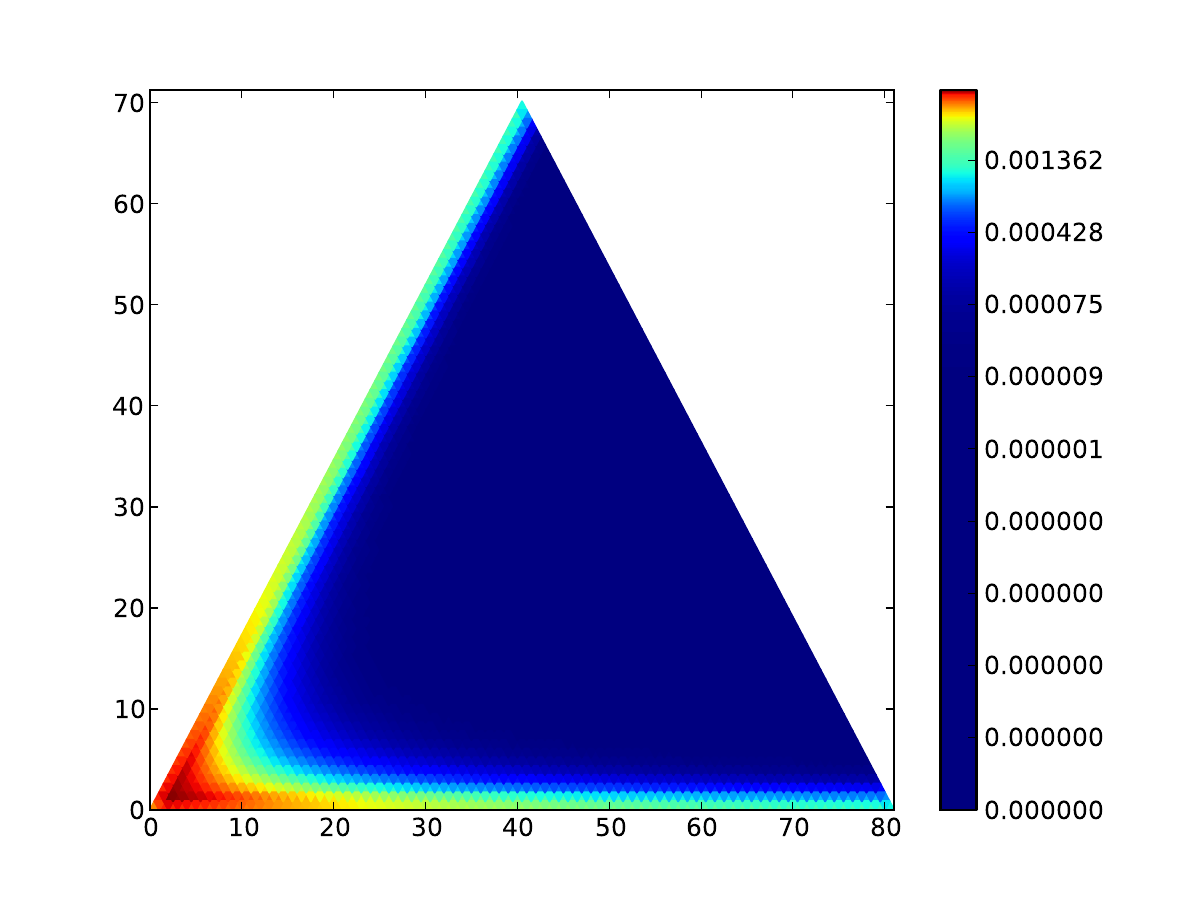}
%             \caption{}
%             \label{}
        \end{subfigure}
        \begin{subfigure}[b]{0.4\textwidth}
            \centering
            \includegraphics[width=\textwidth]{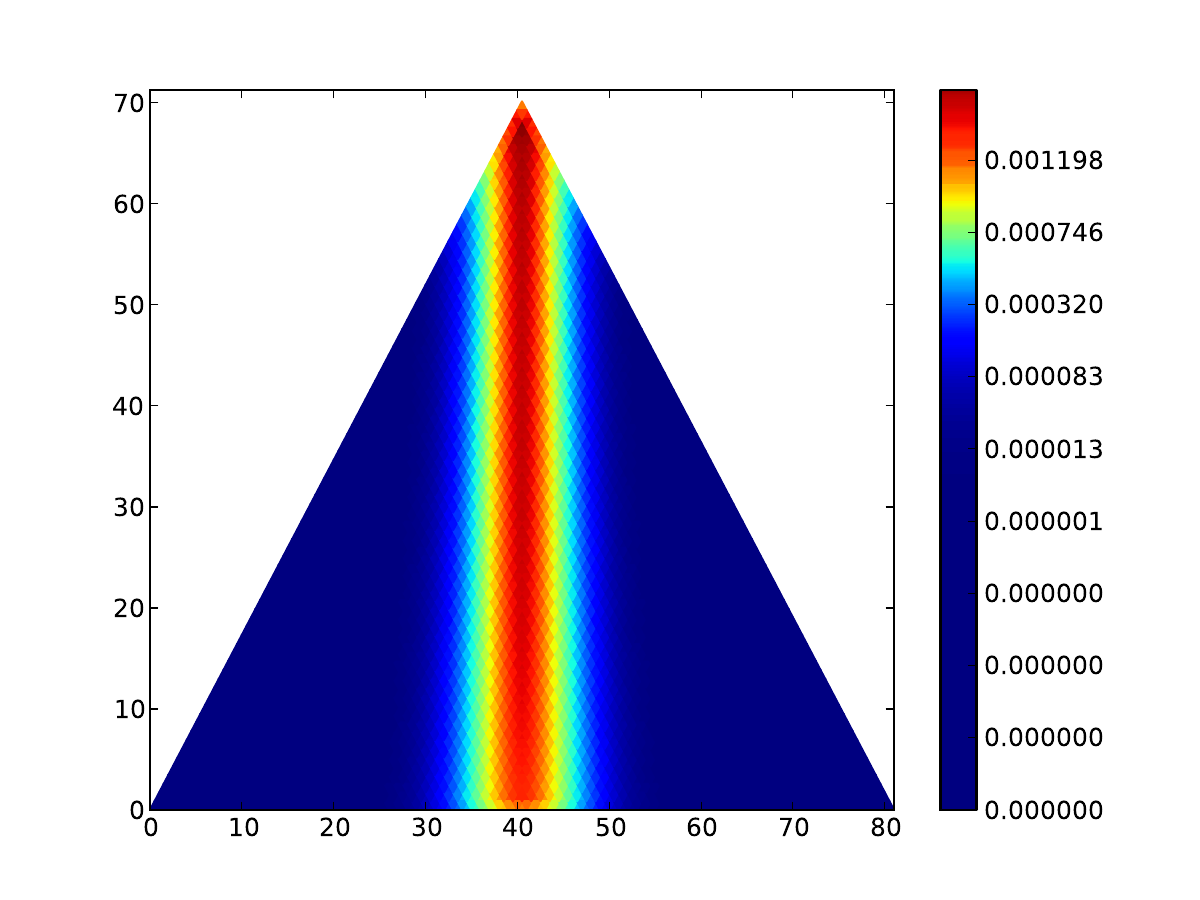}
%             \caption{}
%             \label{}
        \end{subfigure}
%         \begin{subfigure}[b]{0.4\textwidth}
%             \centering
%             \includegraphics[width=\textwidth]{plots/wf/2_stationary.png}
% %             \caption{}
% %             \label{}
%         \end{subfigure}%
%         ~ %add desired spacing between images, e. g. ~, \quad, \qquad etc. 
%           %(or a blank line to force the subfigure onto a new line)
%         \begin{subfigure}[b]{0.4\textwidth}
%             \centering
%             \includegraphics[width=\textwidth]{plots/wf/20_stationary.png}
% %             \caption{}
% %             \label{}
%         \end{subfigure}
%         \begin{subfigure}[b]{0.4\textwidth}
%             \centering
%             \includegraphics[width=\textwidth]{plots/wf/47_stationary.png}
% %             \caption{}
% %             \label{}
%         \end{subfigure}
        \caption{Top Row: Relative entropy of the expected next state with the current state $D_1(\bar{a})$ for Fermi incentives ($q=1$, $\beta=1$) for the Wright Fisher process with $\mu=\frac{3}{2}\frac{1}{N}$ for game matrices 2, 20, and 47 in Bomze's classification (same as Figure \ref{three_player_plots}. Middle Row: Stationary distributions for the Wright-Fisher process. Bottom Row: Stationary distributions with $\mu=\frac{1}{2}\frac{1}{N}$. The bottom row has relative entropies that are slightly different (notably the lower left vertex of the middle column plots), but they are very similar to the top row and so omitted.}
        \label{three_player_plots_wf}
\end{figure} 
\end{landscape}

\section{Extensions}

\subsection{Non-constant Populations}

Like the Moran process, the incentive and Wright-Fisher processes keep the population at a fixed size. Here we give an example that suggests that the methods used in this manuscript could be useful for processes with variably sized populations. We use the process defined in \cite{harper2013inferring}, and note that there are other models of variable population size process and dynamics in the literature, such as \cite{melbinger2010evolutionary}.

We modify the Moran process by inserting a coin-flip in each step to determine if a birth event or death event is to take place. The probability of this intermediate step can depend on the population state, and so can define a carrying capacity for the population. For instance, we could take an curve that has probability one of a birth event if the population has size $M=1$, is $1/2$ for $1 < M < N$, and is 0 for $M=N$. Clearly there are many variations on this probability, and the Moran process is a special case, if we separate the birth and death events.

Figure \ref{figure_vps} shows the stationary distribution (left plot) of such a process with the replicator incentive for the fitness landscape defined by $a=1=d$ and $b=2=c$ and the birth probabilities described in the previous paragraph. Now the ternary plot shows the population states $(a_1, a_2)$ where $a_1 + a_2 < N$; the rightward coordinate is the population size $N$. The right plot is $D_0$ of the expected next state from the current state, where the neighboring states now may have a total population size that is greater or less by one. On the right hand plot, the Lyapunov quantity is minimal when $a_1 = a_2$, as we might expect from the fitness landscape and the examples earlier in the paper. These minima occur at the maxima of the stationary distribution as expected, since for $a_1 + a_2 = M < N$, we would expect the stationary max for $M$ fixed to be $a_1=a_2=M/2$. We leave a precise extension of the main theorem to such processes as an open question.

\begin{figure}[h]
        \begin{subfigure}[b]{0.4\textwidth}
            \centering
            \includegraphics[width=\textwidth]{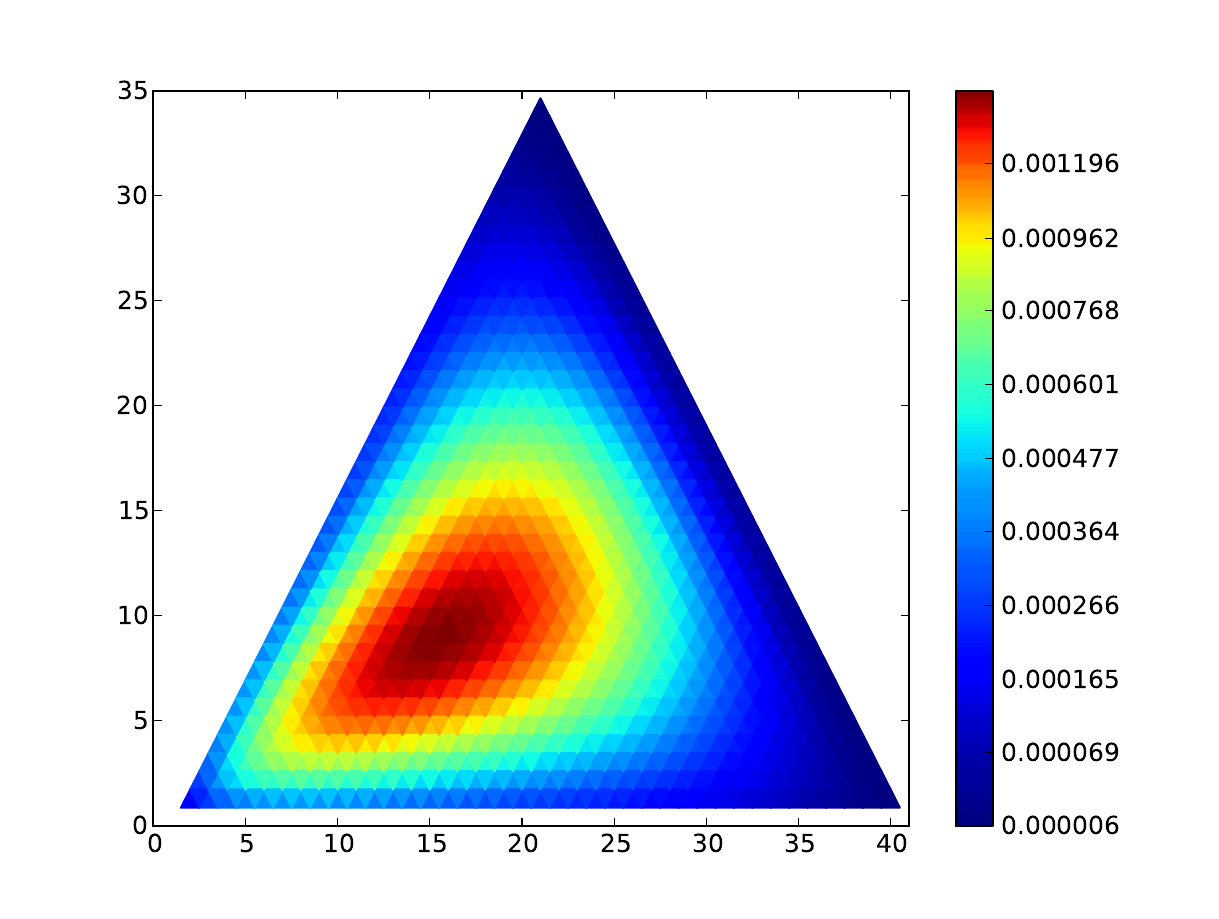}
%             \caption{}
%             \label{}
        \end{subfigure}
        \begin{subfigure}[b]{0.4\textwidth}
            \centering
            \includegraphics[width=\textwidth]{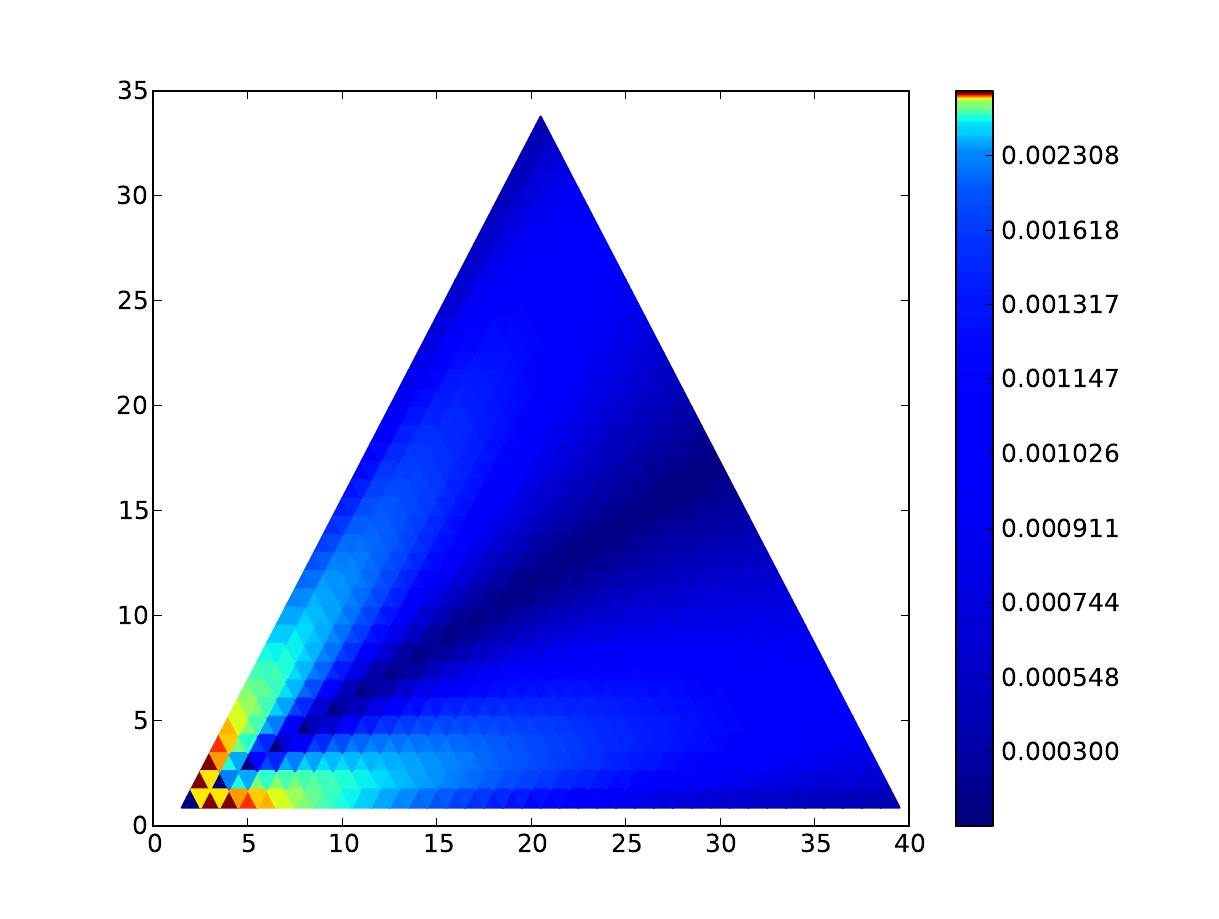}
%             \caption{}
%             \label{}
        \end{subfigure}%
        \caption{Left Figure: Stationary distribution for a Moran-like process with non-constant population size with the replicator incentive for the fitness landscape defined by $a=1=d$ and $b=2=c$, maximum population size $N=40$, $\mu = 0.01$. The coordinates are $(a_1, a_2, a_1 + a_2)$, with the population size $a_1 + a_2$ increasing to the top and right. Right Figure: $\sqrt{D_0}$ distance between the expected next state and the current state. As expected, minima occur along the line $a_1=a_2$ with a local minima of $D_1$ at $(20,20)$. We have taken the root of $D_1$ to exaggerate the variation visually. Sigmoid (s-curve) probabilities for the birth-death decision behave similarly.}
        \label{figure_vps}
\end{figure} 

\subsection{Evolutionary Graph Theory}
Many authors now study Moran-like processes for populations distributed on graphs \cite{lieberman2005evolutionary} \cite{ohtsuki2007evolutionary}. So far we have been working in a fully-connected population, that is, a population on a complete graph in the sense that every replicator interacts with every other replicator (not to be confused with the graph defined by the transitions of the Markov processes). We outline how a result analogous to Theorem \ref{main_thm_local} could hold for populations on graphs. A birth-death process on a graph is again a Markov process, and we consider an incentive process on two-types. Let the graph be a cycle. We argue for the stationary stability of some states over others. Suppose that the replicators are distributed $A,B,A,B,\ldots$ about the cycle (i.e. a proper coloring), as in Figure \ref{figure_cycle} (left). Then any reproductive event $A \to B$ or $B \to A$ will necessarily change the state, since every vertex has adjacent neighbors of the other type. Such a state is inherently unstable, and will have a small presence in the stationary distribution. 

\begin{figure}[h]
        \begin{subfigure}[b]{0.2\textwidth}
            \centering
            \includegraphics[width=\textwidth]{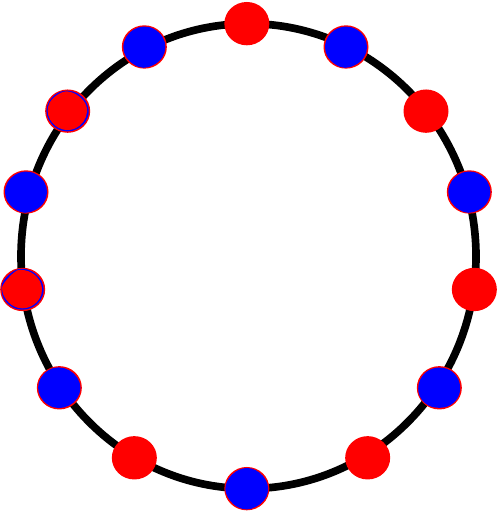}
%             \caption{}
%             \label{}
        \end{subfigure}
        \qquad
        \begin{subfigure}[b]{0.2\textwidth}
            \centering
            \includegraphics[width=\textwidth]{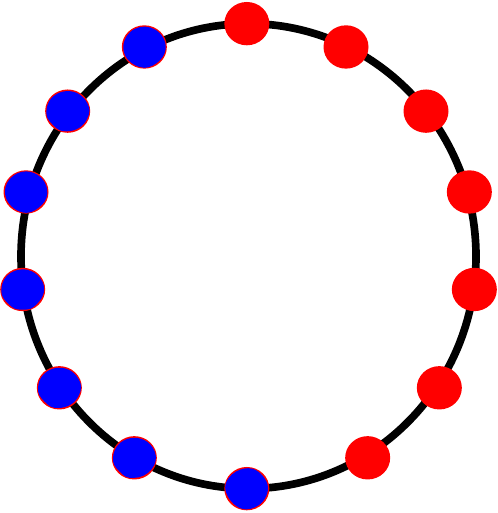}
%             \caption{}
%             \label{}
        \end{subfigure}%
        \caption{Two configurations for an incentive process on a cycle. The left hand configuration is inherently unstable since any non-mutation replication will alter the configuration. The right-hand configuration is much more stable. For small mutation rates, only replication events on the interface between the two subpopulations can alter the configuration.}
        \label{figure_cycle}
\end{figure} 

Similarly, we can follow this reasoning to conclude that the most stable configurations will have the least possible interactions between the two types. In the case of a cycle, this means that the population is segregated into two connected subpopulations, as depicted in Figure \ref{figure_cycle} (right). All the remains is to determine the relative sizes of the two subpopulations. Proceeding in the spirit of Theorem \ref{main_thm_local}, we look for states such that the population state's probability of moving up $(i, N-i) \to (i+1, N-i-1)$ is the same as the probability of moving down $(i, N-i) \to (i-1, N-i+1)$. For very small $\mu$, only replication events at the interface of the two subpopulations will affect the graph state. Suppose that subpopulations are of distribution $a = (i, N-i)$. Then we have that the probability of increasing the size of the subpopulation of type $A_1$ (red) is simply $p_1(\bar{a}) (2/a_1) (1/2)$ since we must choose an $A_1$ individual to reproduce, it has to be one of the two on the interface between the subpopulations, and we have to randomly chose the individual of type $A_2$ (blue) to be replaced. Similarly for the subpopulation of $A_2$ individuals, and these transitions are equal when $a_2 p_1(\bar{a}) = a_1 p_2(\bar{a})$, i.e. when the subpopulations are in the ISS proportions. We have verified computationally that this is in fact the case for some two player games, such as $a=1=d$, $b=2=c$ and the neutral landscape for $2 \leq N \leq 14$. Specifically, there is a particular value of $\mu$ such that these configurations are stationary stable for each $N$ tested (when consolidated via cyclical symmetry). For $N=2$, this value is analytically computable to be $\mu = 1/3$, and $\mu \geq 1/N$ appears sufficient computationally in other cases. For larger $N$ the number of states in the Markov chain is too large for a direct approach.

A similar line of reasoning applies to the random graph, and so we conjecture that analogous results should hold for the random graph and k-regular graphs (and likely others).

%An interesting case to consider for future study is that of two complete graphs of sizes $N_1$ and $N_2$ connected by a single edge. If the state $(N_1, N_2)$ happens to be an ISS for the dynamic, we conjecture that the most stable configuration is for the two complete graphs to be completely occupied by type A and B respectively. The configuration where the subgraphs were occupied by the other type would also be relatively stable. If there was a substantial divergence between the ISS and $(N_1, N_2)$, there are a few reasonable outcomes: (1) the type corresponding to the larger $N_i$ would dominate both subpopulations, (2) each subpopulation could be proportionally occupied by both types (in the ratio predicted by the ISS), or (3) each type stably occupies one subpopulation.

\section{Discussion}

\subsection{Vanishing Mutation}

Throughout, we have explicitly assumed that the mutation rates are (at least) nonzero so as to guarantee the existence of the stationary distribution. In \cite{fudenberg2006imitation} \cite{fudenberg2006evolutionary} \cite{fudenberg2008monotone}, it is shown that as $\mu \to 0$, the stationary distribution concentrates on the boundary states for the Moran process (and some others), and is given in terms of the fixation probabilities of the process. These results complement those of this manuscript, connecting stationary stability with fixation probabilities. In other words, it is trivially the case that for $\mu = 0$, the fixation states are stationary stable for e.g. the Moran process on linear landscapes, since they are locally maximal for the stationary distribution. That they are ISS/ESS also  depends on the incentive -- this would be true for the the Moran process (e.g. $p((0,1)) = (0,1)$ in the $n=2$ case) but not necessarily for e.g. the projection incentive on the neutral landscape.

\subsection{Existence of ISS Candidates}

We have left the question of whether stationary stable states or ISS candidates exist as a prerequisite for the results presented thus far. For fixed $\mu$ and other parameters fixed, there may not be a value of $N$ such that $p(\bar{a}) = \bar{a}$ for an arbitrary incentive, but the converse is true: $\mu = \frac{n-1}{n}$, where $n$ is the number of types, gives $p_i(\bar{a}) = \frac{1}{n}$ for all $i$, so the central point of the simplex is an ISS candidate, and it is stationary stable (the process is equivalent to the neutral landscape, which has a known stationary distribution for all $n$ for this mutation matrix \cite{khare2009rates}). Computationally we have observed for fixed population size, there is often a critical value of $\mu$ that determines if there is an internal stationary maximum, which is not surprising given that the processes often fixate when $\mu \to 0$. Hence there are may be analogs to Theorems \ref{main_thm_local} \ref{main_thm_global} such that for fixed $N$, there exists a mutation matrix that yields a stationary maximum that is also evolutionarily stable, or some other relationship between $\mu$ and $N$ that achieves this.

\subsection{Large Population Limit}

We have shown above that the relative entropy $D_{KL}(E(\bar{a}), \bar{a})$ is locally minimal for evolutionary stable states, and so we can view Theorem \ref{main_thm_global} as a finite population analog of the fact that $D_{KL}(\hat{x}||x)$ is a Lyapunov function for the replicator dynamic when $\hat{x}$ is evolutionarily stable. Explicitly, Traulsen et al show \cite{traulsen2005coevolutionary} \cite{traulsen2006coevolutionary} \cite{traulsen2006stochasticity} \cite{pacheco2006active} \cite{zhou2011fixation} essentially that the Moran process maps to the stochastic replicator equation with mean dynamic $\dot{x} = T^{+}(x) - T^{-}(x)$ (in the $n=2$ case). The stable rest points of this dynamic are those that satisfy the ISS candidate condition when we replace $\bar{a}$ with $x$, and we have that the relative entropy is locally minimal at these points. Moreover, this limiting process shows that any stable rest point of the replicator dynamic must be a limit of stationary stable states for some sufficiently large $N$ (the stationary states are by definition rational and the rest point could be irrational).

What happens to the stationary distribution in the large population limit? Consider the explicit stationary distribution in Equation \ref{stationary_example}. For large $N$, we have that the stationary distribution converges to a binomial distribution with $p = 1/2$. Scaling to the simplex $[0,1]$, we have that the stationary distribution is, by the normal approximation to the binomial, a normal distribution centered at $1/2$ with variance $1/(4N)$. As $N \to \infty$, the stationary distribution becomes a delta function centered on the ESS. Furthermore, since the replicator equation is deterministic, the expected next state is also a deterministic quantity, and naturally replaced by the ESS $\hat{x}$, and so the Lyapunov quantity becomes the function $D_{KL}(\hat{x}||x)$. This is essentially why the stationary distribution and the relative entropy capture the properties of the vector field of the replicator equation, as in Figure \ref{figure_graphical_abstract}. We highly encourage the reader to compare Figure \ref{figure_rsp_lyapunov} to Figure 1.4 of \cite{traulsen2009stochastic} for a striking visualization of the relationship between a Lyapunov quantity (essentially equivalent to the relative entropy) for the replicator dynamic and the functions $D_d$ that we have defined in this work. Sandholm et al also derive a number of large population limits of the stationary distribution for similar processes \cite{sandholm2007simple} and \cite{sandholm2014large}.

The key difference between the distance functions is the natural geometry of the simplex, the Shahshahani geometry \cite{shahshahani1979new}, is closely associated with the relative entropy. Forward-invariance of the replicator equation is a property acquired because of the limit $N \to \infty$, and not true for the Markov processes considered above. This substantially changes the character of the limits as $x_i \to 0$ since for integral population states, the process gets no closer to zero than $1/N$.

\subsection{Conclusion}

We have introduced the concept of stationary stability for finite populations and have shown that it captures the traditional notion of evolutionary stability. In the large population limit, our results limit to the known Lyapunov theory of evolutionary stability via Traulsen et al's mapping of the Moran process to the replicator equation. We have demonstrated that stationary stability works for all mutation rates and for a large class of incentives, connects with the fixation theory of the Moran process, and likely extends to finite populations on graphs.

Crucially, we have given Lyapunov quantities that are very easy to compute as a means of understanding the stationary distribution, which can be very difficult to study analytically, and for which the stationary distribution can not be characterized in closed form (currently). These quantities derive from information-theoretic distance functions ranging from the Euclidean distance to the relative entropy (or Kullback-Leibler divergence).

\subsection*{Methods}
All computations were performed with python code available at \url{https://github.com/marcharper/stationary}. This package can compute exact stationary distributions for detail-balanced processes and approximate solutions for all other cases mentioned in this manuscript. All plots created with \emph{matplotlib} \cite{Hunter:2007}, except the vector field in the graphical abstract, created with Dynamo \cite{franchetti2012introduction}.

\subsection*{Acknowledgments}

This work was partially supported by the Office of Science (BER), U. S. Department of Energy, Cooperative Agreement No. DE-FC02-02ER63421.

\bibliography{ref}
\bibliographystyle{plain}

\end{document}